\newcommand{\ii}{\sqrt{-1}}
\newcommand{\wt}[1]{\widetilde{#1}}
\newcommand{\wh}[1]{\widehat{#1}}
\newcommand{\veps}{\varepsilon}
\newcommand{\vphi}{\varphi}
\newcommand{\al}{\alpha}
\newcommand{\ga}{\gamma}
\newcommand{\de}{\delta}
\newcommand{\la}{\lambda}
\newcommand{\om}{\omega}
\newcommand{\sig}{\sigma}
\newcommand{\vka}{\varkappa}
\newcommand{\ka}{\kappa}
\newcommand{\Ga}{\Gamma}
\newcommand{\Om}{\Omega}
\newcommand{\bR}{\mathbb{R}}
\newcommand{\bC}{\mathbb{C}}
\newtheorem{thm}{Theorem}
\newtheorem{prop}[thm]{Proposition}
\newtheorem{lem}[thm]{Lemma}
\newtheorem{cor}[thm]{Corollary}
\theoremstyle{definition}
\newtheorem{defn}[thm]{Definition}
\newtheorem{remark}[thm]{Remark}
\numberwithin{thm}{section}
\numberwithin{equation}{section}
\renewcommand{\[}{\begin{equation}}
\renewcommand{\]}{\end{equation}}
\newcommand{\wed}{\wedge}
\newcommand{\ov}[1]{\overline{#1}}
\newcommand{\Bb}{{\bf B\,}}
\title[polar sets  on compact Hermitian manifolds]{Polar sets for $m$-subharmonic functions on compact Hermitian manifolds}
\author{S\l awomir Ko\l odziej and Ngoc Cuong Nguyen} 
\address{Faculty of Mathematics and Computer Science, Jagiellonian University, \L ojasiewicza 6, 30-348 Krak\'ow, Poland}
\email{slawomir.kolodziej@im.uj.edu.pl}
\address{Department of Mathematical Sciences, KAIST, 291 Daehak-ro, Yuseong-gu, Daejeon 34141, South Korea}
\email{cuongnn@kaist.ac.kr}
\date{\today}
\begin{document}

\maketitle

\begin{abstract} We prove a sharp decay of capacity of sublevel sets of a $(\om,m)$-subharmonic functions on a $n$-dimensional compact Hermitian manifold $(X,\om)$ which generalizes the case $m=n$  as well as the case $1\leq m\leq n$ on a compact K\"ahler manifold. We also obtain the full characterizations of polar sets of such functions in terms of the corresponding local and global capacities, and of the extremal functions.
\end{abstract}

\bigskip
\bigskip

\section{Introduction}
\label{sec:intro}

The introduction of a new capacity for plurisubharmonic functions by Bedford and Taylor  \cite{BT76, BT82} led to a positive answer to a question of Lelong \cite{Le50}: if   negligible sets are precisely pluripolar sets. They also used it to characterize pluripolar sets and to  simplify the proof of Josefson's theorem \cite{Jo} on the equivalence between locally and globally pluripolar sets. Subsequently, the first author  found an almost sharp uniform estimate for solutions of complex Monge-Amp\`ere equation whose the right hand side is well-dominated by the capacity \cite{Ko98, K05}.  The framework of pluripotential theory in \cite{BT76,BT82} has been generalized successfully to compact K\"aher manifolds by Guedj and Zeriahi \cite{GZ05}. Such a global pluripotential theory had a great impact in K\"ahler geometry as shown in the monograph \cite{GZ-book}.

In the real setting, $k$-convex functions are admissible solutions to real $k$-Hessian equations studied in \cite{CNS85}. The singularities of such functions have been studied thoroughly by Labutin in \cite{La} where the ideas of pluripotential theory  proved to be useful. We refer the reader to the survey of Wang \cite{Wa09}  and reference therein for more information on the equation and properties of this class of functions.

Later B\l ocki \cite{Bl05} initiated the study of potential theory for $m$-subharmonic ($m$-sh for short) functions   while smooth $m$-sh functions are admissible solutions to the complex Hessian equation which have been studied earlier in \cite{V88}, \cite{Li04},  independently. A major progress has been obtained by Dinew and the first author \cite{DK14, DK17} where the authors developed the weak solution theory with the right hand side in $L^p$ for the equation both in domains in $\bC^n$ and on  compact K\"ahler manifolds. This was a strong catalyst to push further the study of  singularities of $m$-sh functions and  Cegrell's approach \cite{Ce98} to complex Hessian equations which was done notably by Lu   \cite{chinh13b, chinh15}, Lu and Nguyen \cite{chinh-dong}.

Weak solutions of complex Hessian equations on Hermitian manifolds were studied in \cite{chinh13a} and \cite{KN3} after the works Tosatti and Weinkove \cite{TW10b} in the case of $m=n$ and Sz\'ekelyhidi \cite{Sz18} and  Zhang \cite{Zha17}  (independently) in the general case $1\leq m \leq n$. Since then, the topic has become attractive and there are  many recent works in this area. Let us mention just a few \cite{CM21}, \cite{CP22}, \cite{CX25}, \cite{D21}, \cite{DL21}, \cite{GN18}, \cite{GP22}, \cite{GPTW24} \cite{PT21},  \cite{GLu25}, \cite{LeN} and \cite{Su24}.

Now let $(X,\om)$ be a compact Hermitian manifold of dimension $n$ and let $m$ be an integer,  $1\leq m \leq n$. Denote by $SH_m(X, \om) $ the set of all $(\om,m)$-subharmonic functions on $X$.  We continue the study of potential theory for $(\om,m)$-subharmonic   functions 
on compact Hermitian manifolds initiated in  \cite{KN3, KN25}.  The crucial technical estimates carried out before for open sets
are here obtained in the compact setting. They allow to get more information on the singular (polar) sets of  $(\om,m)$-subharmonic functions, in particular the equivalence of notions of locally polar and globally polar sets and their characterization in terms of capacity.

For a Borel set $E\subset X$ the (global) $m$-capacity is given by
$$
	cap_m(E) = \sup \left\{ \int_E H_m(v) : v\in SH_m(X,\om), -1\leq v\leq 0 \right\}.
$$
Here the complex Hessian measure of a bounded $(\om,m)$-sh function $v$ is 
\[\label{eq:Hm}
	H_m (v) := (\om + dd^c v)^m \wed \om^{n-m}.
\]
We first show the following sharp estimate for the capacity of  of sublevel sets.

\begin{thm} \label{thm:intro-cap}
Let $v_0\in SH_m(X,\om)$ be such that $\sup_X v_0 =0$.  There exists a uniform constant $A$ depending only on $\om, m,n$ such that
$$
	cap_m(\{v_0 <- t\}) \leq \frac{A}{t} \quad \text{for every } t>0.
$$
In particular, $cap_m(P) =0$ if $P$ is a globally $m$-polar set.
\end{thm}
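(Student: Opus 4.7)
The plan is to reduce the theorem to a uniform Chern--Levine--Nirenberg type estimate via a Chebyshev step, and then to prove that CLN estimate by iterated Hermitian integration by parts. Fix any admissible test function $v\in SH_m(X,\om)$ for the capacity, that is $-1\leq v\leq 0$. On the set $\{v_0<-t\}$ we have $-v_0/t>1$, hence
\[
\int_{\{v_0<-t\}} H_m(v)\leq \frac{1}{t}\int_X(-v_0)\,H_m(v).
\]
Taking supremum over $v$, the theorem reduces to producing a bound
\[
\int_X(-v_0)\,H_m(v)\leq A
\]
in which $A=A(\om,m,n)$ is independent both of the admissible $v$ and of $v_0\in SH_m(X,\om)$ with $\sup_X v_0=0$.

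To prove this reduced estimate I would truncate $v_0^N:=\max(v_0,-N)$ to land in the bounded category, establish the bound uniformly in $N$, and then pass $N\to\infty$ by monotone convergence. Expanding
\[
H_m(v)=\sum_{k=0}^m\binom{m}{k}(dd^cv)^k\wed\om^{n-k},
\]
the summand $k=0$ reduces to $\binom{m}{0}\int_X(-v_0)\om^n$, which is uniformly bounded by the $L^1(X,\om^n)$-compactness of the normalized family $\{u\in SH_m(X,\om):\sup_X u=0\}$. For $k\geq 1$ one uses Stokes' theorem on $X$ to transfer $dd^c$ from $v$ onto $v_0^N$. In the K\"ahler setting this closes the recursion at once since $\om$ is closed; in the Hermitian setting every transfer produces correction terms involving $d\om$ and $dd^c\om$. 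Exploiting $|v|\leq 1$ together with the Hermitian CLN-type inequalities developed by the authors in \cite{KN3,KN25}, these corrections can be absorbed into integrals of $(-v_0^N)$ against smooth fixed forms and Hessian measures of lower order, so that the finite recursion closes with a geometric constant $A=A(\om,m,n)$.

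The main obstacle is the rigorous control of that recursion: each round of integration by parts produces new error terms of higher formal complexity coming from $d\om$ and $dd^c\om$, and one must show that each is dominated by integrals already treated at a previous step, so the induction terminates with a clean bound independent of $v_0$ and $v$. Once the uniform CLN estimate is available, the statement about globally $m$-polar sets follows at once: given such $P$, pick $v_0\in SH_m(X,\om)$ with $\sup_X v_0=0$ and $P\subset\{v_0=-\infty\}$; then $P\subset\{v_0<-t\}$ for every $t>0$, so $cap_m(P)\leq A/t$, and letting $t\to\infty$ yields $cap_m(P)=0$.
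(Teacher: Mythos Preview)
Your overall strategy coincides with the paper's: reduce via Chebyshev to a uniform Chern--Levine--Nirenberg estimate $\int_X(-v_0)H_m(v)\leq A$ (this is exactly Theorem~\ref{thm:cap-int} in the paper), then deduce the polar-set statement by sending $t\to\infty$. The truncation and monotone-convergence step is also how one makes the argument rigorous.

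Where your sketch falls short is in the proof of the CLN estimate itself. You propose to expand $H_m(v)$ binomially in powers of $(dd^cv)^k$ and then integrate by parts; the paper instead keeps the positive forms $\om_v^k\wedge\om^{n-k}$ intact and proves by induction on $k$ the pair of bounds
\[
\int_X(-v_0)\,\om_v^k\wedge\om^{n-k}\leq C_k,\qquad \int_X(-v_0)\,dv\wedge d^cv\wedge\om_v^{k-1}\wedge\om^{n-k}\leq D_k.
\]
The second, a \emph{gradient} estimate, is the ingredient you do not mention, and it is precisely what makes the recursion close in the Hermitian case. When you move one factor of $dd^cv$ onto $v_0$ you produce torsion terms of the shape $\int v_0\,du\wedge d^c\om\wedge\om_v^{k-1}\wedge\om^{n-k-1}$ (and similar), where the exponent of $\om$ has dropped to $n-m-1$ and the integrand is no longer of definite sign. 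These cannot simply be ``absorbed into Hessian measures of lower order''; they are controlled only via the Cauchy--Schwarz inequalities of Lemmas~\ref{lem:CS-classic} and~\ref{lem:CS}, which bound them by a product of $\int(-v_0)\,dv\wedge d^cv\wedge\cdots$ and an already-controlled mass term. Thus the induction must carry $(C_k)$ and $(D_k)$ simultaneously (Steps~1--3 in Section~\ref{sec:uni-int}), and the passage $(C_k,D_{k-1})\Rightarrow(D_k)\Rightarrow(C_{k+1})$ is the actual content of the argument. Your proposal correctly names the obstacle but does not supply this mechanism; citing the local CLN inequalities of \cite{KN3,KN25} is not enough, since the global compact-Hermitian estimate is new here.
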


This generalizes a result  in \cite{DK12} which dealt with  the case $m=n$   and used the local argument. Here we use a global argument as in the case of compact K\"ahler manifolds \cite[Proposition~3.6]{GZ05} and \cite[Corollary~3.19]{chinh13b}, thus it provides also an alternative proof to the one in  \cite{DK12}. In fact, we prove a stronger uniform integrability of $(\om,m)$-sh function with respect to Hessian measures of bounded functions in this class (Theorem~\ref{thm:cap-int}). In the statement of the theorem we say that a set is  {\em globally} $m$-polar if there exists $u \in SH_m(X,\om)$ such that 
$$
	E \subset \{u = -\infty\}.
$$
A weaker result of this sort has been obtained recently by Fang \cite{Fa25a} where she considered a smaller $m$-capacity in which the supremum was taken over all  $v\in PSH(X,\om)$. This estimate is very useful in the  study of weak solutions to the complex Hessian equations \cite{KN3, Fa25b}.

Secondly, we give the characterizations of polar sets of $(\om,m)$-sh functions. Roughly speaking there are plenty of such globally $m$-polar sets.

\begin{thm}
\label{thm:intro-polar} Let $E\subset X$ be a Borel set. The following statements are equivalent. 
\begin{itemize}
\item[(a)] $E$ is a globally $m$-polar set.  
\item[(b)]  $E$ is a locally $m$-polar set.
\item[(c)] The relative extremal $m$-sh function $h_E^* \equiv 0$. 
 \item[(d)] $cap_m^*(E) = 0$.
 \item[(e)] The global extremal $m$-sh function $V_E^*\equiv +\infty$.
\end{itemize}
\end{thm}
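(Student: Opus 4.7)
My plan is to pivot around statement (a) and establish the circle $(a) \Rightarrow (b) \Rightarrow (a)$ together with $(a) \Rightarrow (d) \Rightarrow (b)$, $(a) \Rightarrow (c) \Rightarrow (d)$ and $(a) \Rightarrow (e) \Rightarrow (a)$. The forward implications from (a) are either formal or follow immediately from Theorem~\ref{thm:intro-cap}: $(a) \Rightarrow (b)$ is trivial; $(a) \Rightarrow (d)$ is the final assertion of Theorem~\ref{thm:intro-cap} combined with outer regularity of $cap_m^*$; for $(a) \Rightarrow (c)$ and $(a) \Rightarrow (e)$, fix $u \in SH_m(X,\om)$ with $E \subset \{u = -\infty\}$ and $\sup_X u = 0$. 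Then $\eps u \in SH_m(X,\om)$ for every $\eps \in (0,1]$ by convexity of the cone (it is a convex combination of $u$ and $0$) and $\eps u \equiv -\infty$ on $E$, so $\eps u$ is a competitor in the supremum defining $h_E$, giving $h_E^*(x) \geq \eps u(x)$ and hence $h_E^* \equiv 0$ after sending $\eps \to 0^+$. Similarly $\eps u + C \in SH_m(X,\om)$ is a competitor for $V_E$ for every $C \in \bR$, giving $V_E(x) \geq \eps u(x) + C \to +\infty$ at every $x \in \{u > -\infty\}$, and hence $V_E^* \equiv +\infty$.

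For the return to (a) from (c) and (e), I treat $(c) \Rightarrow (d)$ via the standard Choquet-type identity that $h_E^* = -1$ holds $cap_m$-quasi-everywhere on $E$, so if $h_E^* \equiv 0$ then $E$ itself is $cap_m$-negligible and $cap_m^*(E) = 0$. The implication $(e) \Rightarrow (a)$ is the classical series construction: pick $v_j \in SH_m(X,\om)$ with $v_j \leq 0$ on $E$ and $M_j := \sup_X v_j \to +\infty$; normalize $w_j := v_j - M_j$ so that $\sup_X w_j = 0$ and $w_j \leq -M_j$ on $E$. The uniform $L^1(X,\om^n)$-bound on the family $\{w \in SH_m(X,\om) : \sup_X w = 0\}$, a standard corollary of Theorem~\ref{thm:intro-cap} together with Hartogs-type compactness, allows me to pass to a subsequence with $M_j \geq 4^j$ and set $u := \sum_j 2^{-j} w_j$. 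This is a decreasing limit of convex combinations of the $w_j$, so it lies in $SH_m(X,\om)$, is not identically $-\infty$, and satisfies $u \equiv -\infty$ on $E$.

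The hardest step is the chain $(d) \Rightarrow (b) \Rightarrow (a)$, a two-sided Josefson-type theorem. For $(d) \Rightarrow (b)$, I cover $X$ by finitely many coordinate balls $(U_\alpha,\phi_\alpha)$ and on each pick a smooth strictly $m$-sh potential $\rho_\alpha$ such that the correspondence $v \mapsto v + \rho_\alpha$ relates global $(\om,m)$-sh functions on $X$ to local $m$-sh functions on $U_\alpha$; this identifies the local and global $m$-capacities of $E \cap U_\alpha$ up to a multiplicative constant, so $cap_m^*(E) = 0$ forces local capacity zero and hence local $m$-polarity by the local theory of \cite{DK14}. For the globalization $(b) \Rightarrow (a)$, in each chart I select a local $m$-sh $u_\alpha$ with $u_\alpha \equiv -\infty$ on $E \cap U_\alpha$, shrink to $U_\alpha' \Subset U_\alpha$, and take the upper envelope of $u_\alpha + \rho_\alpha$ with a sufficiently negative constant to obtain a $(\om,m)$-sh extension $\tilde u_\alpha$ on $X$ that is $-\infty$ on $E \cap U_\alpha'$; summing $\sum \eps_\alpha \tilde u_\alpha$ with small positive weights produces the desired global defining function. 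The main obstacle is the absence of a global potential for $\om$ in the Hermitian setting, which forces one to work through local potentials and to appeal to the uniform capacity and $L^1$-control from Theorem~\ref{thm:intro-cap} to ensure the global sum is not identically $-\infty$.
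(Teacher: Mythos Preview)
Most of your scheme matches the paper: the forward implications out of (a), the equivalence $(b)\Leftrightarrow(d)$ via Lemma~\ref{lem:cap-equiv} and the local theory, the equivalence $(c)\Leftrightarrow(d)$ (which the paper obtains from Proposition~\ref{prop:cap-gen-est} and the weak comparison principle, Corollary~\ref{cor:relative-ext-polar}), and your series construction for $(e)\Rightarrow(a)$ is exactly Lemma~\ref{lem:global-ext-polar}(a).

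The genuine gap is your globalization step $(b)\Rightarrow(a)$. You propose to take a local $m$-$\om$-sh function $u_\alpha$ with $u_\alpha\equiv-\infty$ on $E\cap U_\alpha$, form ``the upper envelope of $u_\alpha+\rho_\alpha$ with a sufficiently negative constant'' to get a global $(\om,m)$-sh extension $\tilde u_\alpha$, and then sum $\sum\eps_\alpha\tilde u_\alpha$. But the max with any constant $-C$ gives $\tilde u_\alpha\geq -C$ everywhere, so $\tilde u_\alpha$ is \emph{bounded} and certainly not $-\infty$ on $E\cap U_\alpha'$; consequently the finite weighted sum is bounded as well and cannot put $E$ in its $-\infty$ locus. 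The max-gluing trick is designed to extend bounded data across chart boundaries; it cannot propagate a pole. No tweaking of $\rho_\alpha$ fixes this, and indeed direct gluing of local defining functions is not how Josefson-type theorems are proved even in $\bC^n$.

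The paper closes the circle differently: it proves $(d)\Rightarrow(a)$ directly by a \emph{rescaled-metric} argument. One sets $\om_j=\om/2^j$, observes from \eqref{eq:comparing-al-om-cap} that $cap_{\om_j,m}^*(E)=0$, and then applies the already-established equivalence $(c)\Leftrightarrow(d)$ \emph{for the form} $\om_j$ to produce $v_j\in SH_m(X,\om_j,\om)$ with $v_j\leq 0$, $v_j\leq -1$ on $E$, and $\int_X|v_j|\,\om^n\leq 2^{-j}$. The point is that the partial sums $u_\ell=\sum_{j\leq\ell}v_j$ satisfy
\[\notag
\om+dd^c u_\ell=\Bigl(\om-\sum_{j\leq\ell}\om_j\Bigr)+\sum_{j\leq\ell}(\om_j+dd^c v_j)\in\overline{\Ga_m(\om)}
\]
because $\sum_{j\geq1}\om_j\leq\om$; so $u_\ell\in SH_m(X,\om)$, the $L^1$ bound prevents collapse to $-\infty$, and $u=\lim u_\ell$ has $E\subset\{u=-\infty\}$. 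You already use the underlying convexity idea in your $(e)\Rightarrow(a)$ step (weights $2^{-j}$ summing to at most $1$); the missing insight is that to get the competitors $v_j$ with \emph{controlled} $L^1$ norm one must shrink the reference form to $\om_j$, and this in turn is available precisely because $(c)\Leftrightarrow(d)$ holds for every Hermitian form, not just for $\om$.
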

Here the global extremal functions $h_E^*$ and $V_E^*$ are  $(\om, m)$-sh   analogues of the extremal functions in global pluripotential theory as  defined   by  Guedj and Zeriahi \cite{GZ05}. 
Thus, we get the same statements as for $\om$-plurisubharmonic ($\om$-psh) functions on compact K\"ahler manifolds \cite[Chapter 9]{GZ-book}. The equivalence between (a) and (b) is a version of Josefson's theorem for $(\om,m)$-subharmonic function on compact Hermitian manifolds. The case of quasi-psh functions on compact Hermitian manifolds has been proven by Vu \cite{Vu}  (see also \cite{GLu22}), while the case of $(\om, m)$-subharmonic functions on K\"ahler manifolds comes from   \cite{chinh-dong}. 

\bigskip

{\em Organization.} In Section~2 we briefly recall the basic definitions and properties of $(\om,m)$-sh functions. Then using the local definition in \cite{KN25} we define the complex Hessian operator for bounded functions on compact manifolds. Next, we state weak convergence theorems and the variants of Cauchy-Schwarz inequalities. Section~3 carries results from the  local setting in \cite{KN25} to the  global setting. Proposition~\ref{prop:cap-gen-est} is important for characterizations of polar sets.
Section~4 contains the proof of uniform integrability (Theorem~\ref{thm:cap-int}). The corresponding $m$-capacity for $(\om,m)$-sh functions is studied in Section~5. The proof of Theorem~\ref{thm:intro-cap} is derived and we also prove by a global argument for the quasi-continuity of $(\om,m)$-sh functions with respect to this capacity. Lastly, we provide the full characterizations of polar sets in Section~6.
 
\bigskip

{\em Acknowledgment.} 
The first  author was partially supported by Sheng
grant no. 2023/48/Q/ST1/00048 of the National Science Center, Poland.
The paper was written while the second author  visited the Center for Complex Geometry (Daejeon). He would like to thank Jun-Muk Hwang and Yongnam Lee for their kind support and exceptional hospitality. He is also grateful to the institution for providing perfect working conditions.  

\section{Hessian measures for bounded functions}
\label{sec:hess-measure}

Let $(X,\om)$ be a compact Hermitian manifold of dimension $n$ and $1\leq m \leq n$ be integer. 
Since $X$ is compact and $\om$ is a smooth Hermitian metric, there exists a constant $\Bb >0$ such  that 
\[\label{eq:curv-bound}
	- \Bb \om^2  \leq dd^c \om \leq \Bb \om^2, \quad
	-\Bb \om^3 \leq d\om \wed d^c \om \leq \Bb \om^3.
\]
This constant is fixed throughout the paper. We sometimes abuse this notation as we may need to multiply $\Bb$ by a multiple of $n$ and $m$ in estimates, still using the same letter  $\Bb$ for such constants.

\subsection{$(\om,m)$-subharmonic functions} \label{ssec:om-m}
Let $\om$ be a Hermitian metric on $\bC^n$ and let $\Om$ be a bounded open set in $\bC^n$. The positive cone $\Ga_m(\om)$, associated to $\om$, of real $(1,1)$-forms is defined as follows. A real $(1,1)$-form $\ga$ is said to belong to $\Ga_m(\om)$ if at any point $z\in \Om$,
$$
	\ga^k \wed \om^{n-k} (z)>0  \quad\text{for } k=1,...,m.
$$
Equivalently, in the normal coordinate system with respect to $\om$ at $z$, diagonalizing $\ga = \ii \sum_i \la_i dz_i\wed d\bar z_i$, we have $\la = (\la_1,...,\la_n) \in \Ga_m$, where 
$$	\Ga_m =\{ \la \in \bR^n : \sig_1(\la) >0, ...,\sig_m(\la)>0\}, 
$$
and  $\sig_k (\la) = \sum_{\la_{i_1<\cdots \la_{i_k}}} \la_{i_1} \cdots \la_{i_k}$ for $1\leq k \leq n$ is the $k$-elementary symmetric polynomial. 

If $u \in C^2(\Om, \bR)$ and $\om_u:= \om + dd^c u \in \ov{\Ga_m(\om)}$, then $u$ is called  an $(\om, m)$-subharmonic (sh) function in $\Om$. In general, an upper semicontinuous function $u : \Om \to [-\infty, +\infty)$ and $u\in L^1_{\rm loc}(\Om)$ is said to be $(\om,m)$-sh if it satisfies
\[\label{eq:om-m-sh-ineq}
	\om_u \wed \ga_1 \wed \cdots \wed \ga_{m-1} \wed \om^{n-m} \geq 0 \quad \text{for every } \ga_1, ...,\ga_{m-1} \in \Ga_m(\om)
\]
in the weak sense of currents. 
Let us denote $SH_m(\Om, \om)$ the set of all $(\om,m)$-sh functions in $\Om$. 

Moreover, we can consider a general positivity condition in \eqref{eq:om-m-sh-ineq} as follows. Let $\chi$ be a Hermitian $(1,1)$-form in $\bar\Om$.  If the function $u$ above satisfies
\[\label{eq:m-om-ineq}
	(\chi +dd^c u) \wed \ga_1 \wed \cdots \wed \ga_{m-1} \wed \om^{n-m} \geq 0 \quad \text{for every } \ga_1, ...,\ga_{m-1} \in \Ga_m(\om),
\]
instead of \eqref{eq:om-m-sh-ineq}, then it is called $m$-subharmonic with respect to $(\chi, \om)$. The space of all of such functions in $\Om$ is denoted by
\[\label{eq:chi-m-om}
	SH_m(\Om, \chi, \om).
\]
In a special case $\chi \equiv 0$, these functions are $m$-subharmonic  with respect to the metric $\om$,  or simply they are called $m-\om$-sh. 

On a compact Hermitian manifold $(X,\om)$ we use the following definition.

\begin{defn}[\cite{KN3}] An upper semi-continuous function $u: X\to [-\infty, +\infty)$ is called $(\om,m)$-sh if $u\in L^1(X)$ and $u\in SH_m(U, \om)$ for each coordinate patch $U\subset \subset X$.
\end{defn}

We  denote by $SH_m(X,\om)$ or simply by $SH_m(\om)$ (if there is no confusion),  the set of all $(\om,m)$-sh functions on $X$.

\subsection{Wedge product of bounded $(\om,m)$-subharmonic functions}

The complex Hessian measure for bounded $m-\om$-sh functions has been defined recently in \cite{KN25}. Using this we can define the wedge product of forms associated to bounded $(\om,m)$-sh functions on any small coordinate ball  $\Om \subset \subset X$ as follows. Since $\Om$ is 
is biholomorphic to a small ball in $\bC^n$, then we can find a strictly psh function $\rho$ in a neighborhood of $\bar\Om$ such that 
$$
	dd^c \rho \geq \om \quad \text{on } \bar\Om.
$$
Let $u\in SH_m(\om) \cap L^\infty(X)$. Then, $u +\rho$ is a bounded $m-\om$-sh function in $\Om$.
Hence, for $1\leq s \leq m-1$, the wedge product 
$$
 [dd^c(u+\rho)]^s  = dd^c (u+\rho) \wed \cdots \wed dd^c (u+\rho)$$
is defined inductively which results in a well-defined $(s,s)$-current of order zero \cite[Lemma~2.3]{KN25}. Moreover, 
$$
	 [dd^c(u+\rho)]^s \wed \om^{n-s} \quad \text{and}\quad  [dd^c(u+\rho)]^m \wed \om^{n-m}
$$
are positive Radon measures on $\Om$ by \cite[Theorem~3.3]{KN25}. By the choice of $\rho$, the smooth $(1,1)$-form $\tau := dd^c \rho -\om$ is positive.  The complex Hessian measure of $u$ in $\Om$ is given by
\[\label{eq:hess-measure}
	(\om + dd^c u)^m \wed \om^{n-m} := \sum_{s=0}^m (-1)^{m-s} \binom{m}{s} [dd^c (u+\rho)]^s \wed \tau^{m-s} \wed \om^{n-m}.
\]
If $u$ is smooth function, then this is an honest identity and therefore, the left hand side is a positive measure. If $u$ is only bounded, then we can take a sequence $\{u_j\}_{j\geq 1} \subset SH_m(\om) \cap C^\infty(X)$ such that $u_j \downarrow u$ on $X$ by \cite[Lemma~3.20]{KN3}.  The weak convergence theorem for decreasing sequences in [KN25, Lemma~5.1] allows us to define the Hessian measure as the well-defined positive measure on the right hand side.
By partition of unity we define $H_m(u)$ on the whole manifold $X$. The same construction can be applied for a tuple of $u_1,...,u_m \in SH_m(X,\om) \cap L^\infty(X)$.

We refer the readers to \cite{GN18}, \cite{GLu25} and \cite{KN3, KN25} for many more properties of general $m-\om$-subharmonic functions. The assumption "locally conformal K\"ahler" made in \cite{GN18} now is removed by the results on the wedge product forms associated to  bounded functions.

\subsection{Weak convergence} Since the weak convergence of measures is a local property, we can extend the results in \cite{KN25} to compact Hermitian manifolds. We state below two important convergence theorems for decreasing  and increasing sequences.  For simplicity we only state theirs simpler version for two sequences of functions, however, they are valid for the wedge products of  forms related to tuples of $k$ functions with $1\leq k\leq m$. We refer the readers to \cite[Section~5]{KN25} for those general statements. 

\begin{prop} \label{prop:conv-decreasing}
Let $\{v_j\}_{j\geq 1}, \{u_j\}_{j\geq 1} \subset SH_m(\om) \cap L^\infty(X)$ be sequences such that $v_j \downarrow v$ and $u_j \downarrow u$ in $X$ with $v, u \in SH_m(\om)\cap L^\infty(X)$. Then, $v_jH_m(u_j)$ converges weakly to $vH_m(u)$. 
\end{prop}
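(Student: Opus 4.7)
The plan is to reduce the statement to the corresponding local weak convergence result in \cite{KN25}, using the fact that weak convergence of Radon measures on $X$ is a local property, and then reassemble via the expansion that defines $H_m$.

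First I would fix a finite cover of $X$ by small coordinate balls $\{\Om_\al\} \subset \subset X$ together with a subordinate partition of unity. Testing $v_j H_m(u_j)$ against a smooth form $\vphi$ on $X$ reduces, up to this partition, to testing against compactly supported smooth forms in each $\Om_\al$. Thus it suffices to show that $v_j H_m(u_j) \to v H_m(u)$ as Radon measures on each fixed $\Om = \Om_\al$.

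On such $\Om$, fix the strictly psh auxiliary function $\rho$ from Section~2.2 with $dd^c \rho \geq \om$, and set $\tau = dd^c\rho - \om \geq 0$. Then $w_j := u_j + \rho \downarrow w := u + \rho$ and $\tilde v_j := v_j + \rho \downarrow \tilde v := v + \rho$ are decreasing sequences of bounded $m$-$\om$-sh functions on $\Om$. The definition \eqref{eq:hess-measure} gives
$$
v_j H_m(u_j) = v_j \sum_{s=0}^m (-1)^{m-s} \binom{m}{s}\, [dd^c w_j]^s \wed \tau^{m-s} \wed \om^{n-m},
$$
so by linearity it suffices, for each $0 \leq s \leq m$, to prove the weak convergence
$$
v_j\, [dd^c w_j]^s \wed \tau^{m-s} \wed \om^{n-m} \longrightarrow v\, [dd^c w]^s \wed \tau^{m-s} \wed \om^{n-m}
$$
as measures on $\Om$. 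Writing $v_j = \tilde v_j - \rho$ splits this into two terms. Since $\rho$ is smooth and $\tau^{m-s} \wed \om^{n-m}$ is a smooth positive $(n-s,n-s)$-form, both factors can be absorbed into the smooth test form. Each resulting expression is exactly of the form treated in \cite[Section~5]{KN25}: the local decreasing convergence theorem there yields both the weak convergence of $[dd^c w_j]^s \wed \eta$ and of $\tilde v_j [dd^c w_j]^s \wed \eta$ against any smooth test form $\eta$ for decreasing sequences of bounded $m$-$\om$-sh functions in $\Om$. Summing over $s$ and over the partition of unity recovers the global statement.

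The only real obstacle is bookkeeping: keeping track of the alternating signs $(-1)^{m-s}\binom{m}{s}$ in the expansion and verifying that each piece individually falls within the scope of the local theorem. Since $\tau$ and $\om$ are smooth and the $\rho$-shift preserves the $m$-$\om$-subharmonic class, no positivity is lost in the reduction, and all substantive analytic content is supplied by \cite[Section~5]{KN25}.
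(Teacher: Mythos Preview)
Your reduction is exactly the approach the paper takes: it states that weak convergence of measures is local and so the result follows from the local convergence theorems in \cite[Section~5]{KN25}, without writing out further details. Your proposal simply spells out this localization via a partition of unity and the expansion \eqref{eq:hess-measure}, which is the intended argument.
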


\begin{prop}\label{prop:conv-increasing} Let $\{v_j\}_{j\geq 1}, \{u_j\}_{j\geq 1}$ be  locally uniformly bounded sequences of $(\om,m)$-sh functions in $X$. Assume $v_j  \uparrow v$, $u_j \uparrow u$ with $u,v \in SH_m(\om)\cap L^\infty(X)$ almost everywhere as $j\to \infty$. Then, $v_jH_m(u_j)$ converges weakly to $vH_m(u)$. 
\end{prop}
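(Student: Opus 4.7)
The plan is to localize the problem via a partition of unity and then invoke the increasing convergence theorem in the local setting established in \cite[Section~5]{KN25}, using the decomposition \eqref{eq:hess-measure} to reduce Hessian measures on compact Hermitian manifolds to objects of the local theory.

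First, since weak convergence of Radon measures is tested against compactly supported continuous functions, a finite partition of unity subordinate to a cover of $X$ by coordinate balls allows one to work inside a single coordinate ball $\Om \subset\subset X$. On $\Om$, fix a strictly psh function $\rho$ near $\bar\Om$ with $dd^c\rho \geq \om$, and let $\tau := dd^c\rho - \om$ be the resulting smooth positive $(1,1)$-form. The translated sequences $u_j + \rho$ are then bounded $m$-$\om$-sh functions on $\Om$ in the local sense, satisfy $u_j + \rho \uparrow u + \rho$ and $v_j \uparrow v$ almost everywhere on $\Om$, and are uniformly bounded there.

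Next, I will multiply \eqref{eq:hess-measure} applied to $u_j$ by $v_j$ and expand, obtaining
\[
v_j\, H_m(u_j) \;=\; \sum_{s=0}^m (-1)^{m-s}\binom{m}{s}\; v_j\; [dd^c(u_j+\rho)]^s \wed \tau^{m-s} \wed \om^{n-m},
\]
and similarly for $v\, H_m(u)$. It therefore suffices, for each $0\leq s\leq m$, to establish the weak convergence on $\Om$
\[
v_j\; [dd^c(u_j+\rho)]^s \wed \tau^{m-s} \wed \om^{n-m} \;\longrightarrow\; v\; [dd^c(u+\rho)]^s \wed \tau^{m-s} \wed \om^{n-m}.
\]
Each summand now has exactly the shape treated by the local increasing convergence theorem from \cite[Section~5]{KN25}, applied to the locally uniformly bounded $m$-$\om$-sh sequences $u_j + \rho \uparrow u + \rho$ (forming the inner wedge) and to the weight sequence $v_j \uparrow v$, against the smooth positive background form $\tau^{m-s}\wed\om^{n-m}$. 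Summing over $s$ and over the partition of unity then gives the global claim.

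The main obstacle I anticipate is ensuring that the local theorem in \cite{KN25} is quoted in sufficient generality: for every $0 \leq s \leq m$ (not only the top-degree case $s=m$), with an arbitrary smooth positive auxiliary form in place of a power of a K\"ahler metric, and with the extra monotone $m$-$\om$-sh weight $v_j$ on the outside. Since the local Hessian theory there is built specifically to accommodate non-closed Hermitian background forms such as $\tau^{m-s}\wed\om^{n-m}$, and the decreasing counterpart Proposition~\ref{prop:conv-decreasing} has just been quoted above in the same generality, the needed tuple-version of the local statement should be directly available and yield the result.
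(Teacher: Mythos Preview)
Your proposal is correct and follows exactly the route the paper itself indicates: the paper does not give a standalone proof of this proposition but simply remarks that weak convergence is local and refers to \cite[Section~5]{KN25}, and your argument makes this explicit by localizing and unwinding the definition \eqref{eq:hess-measure}. One small refinement: to feed the weight $v_j$ into the local increasing theorem you should also translate it, writing $v_j = (v_j+\rho)-\rho$ so that the weight is a bounded $m$-$\om$-sh function minus a fixed smooth function, exactly as you did for $u_j$; after that the local tuple-version from \cite{KN25} applies term by term.
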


\subsection{Weak comparison principles} 
Thanks to the quasi-continuity of $m-\om$-subharmonic functions in \cite[Theorem~4.9]{KN25} and equivalence between capacities (Lemma~\ref{lem:cap-equiv} below) we can remove the continuity assumption in  the weak comparison principle \cite[Theorem~3.7]{KN3}. 
Now the statement holds for bounded functions.

\begin{thm}[weak comparison principle] \label{thm:weak-cp}
Let $\vphi, \psi \in SH_m(\om)\cap L^\infty(X)$. Fix $0<\veps<1$ and use the following notations $$s_{\rm min}(\veps):= \inf_X [\vphi- (1-\veps)\psi] \quad\text{ and }\quad U(\veps, t) := \{\vphi<(1-\veps)\psi + s_{\rm min}(\veps)+t\}$$
for $s>0$. Then, for $0<t< \veps^3/16\Bb$,
\[
	\int_{U(\veps,t)} \om_{(1-\veps)\psi}^m \wed \om^{n-m}
	\leq (1+ \frac{C t}{\veps^m})\int_{U(\veps,t)} \om_\vphi^m \wed \om^{n-m} ,
\]
where $C>0$ is a uniform constant depending only on $n,m,\om$.
\end{thm}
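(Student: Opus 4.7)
The plan is to reduce the bounded case to the already-known continuous version \cite[Theorem~3.7]{KN3} via smooth decreasing approximation combined with the quasi-continuity of bounded $(\om,m)$-subharmonic functions (\cite[Theorem~4.9]{KN25}). By \cite[Lemma~3.20]{KN3}, choose smooth decreasing sequences $\vphi_j \downarrow \vphi$ and $\psi_j \downarrow \psi$ in $SH_m(X,\om) \cap C^\infty(X)$. Put $s_j := \inf_X [\vphi_j - (1-\veps)\psi_j]$ and $U_j(\veps, t) := \{\vphi_j < (1-\veps)\psi_j + s_j + t\}$. Applied to each continuous pair $(\vphi_j, \psi_j)$, \cite[Theorem~3.7]{KN3} yields
$$
\int_{U_j(\veps, t)} \om_{(1-\veps)\psi_j}^m \wed \om^{n-m} \leq \Big(1 + \tfrac{Ct}{\veps^m}\Big) \int_{U_j(\veps, t)} \om_{\vphi_j}^m \wed \om^{n-m}
$$
for every $0 < t < \veps^3/16\Bb$.

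To pass to $j \to \infty$ I would combine three ingredients. The weak convergence of the Hessian measures is immediate from Proposition~\ref{prop:conv-decreasing}: $\om_{\vphi_j}^m \wed \om^{n-m}$ converges weakly to $\om_\vphi^m \wed \om^{n-m}$, and similarly for $(1-\veps)\psi_j$. The constants $s_j$ need not converge exactly to $s_{\rm min}(\veps)$, but any $o(1)$ mismatch can be absorbed into a perturbation of $t$ by an arbitrarily small $\eta > 0$. Finally---and this is the main obstacle---the sets $U_j(\veps,t)$ and $U(\veps,t)$ are defined by strict inequalities and the latter is in general neither open nor closed, since $\vphi - (1-\veps)\psi$ is a difference of two upper semicontinuous functions, so naive pointwise convergence of $\mathbf{1}_{U_j}$ to $\mathbf{1}_{U}$ fails.

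To resolve this, I would exploit quasi-continuity: for any $\delta > 0$, choose an open $G \subset X$ with $cap_m(G) < \delta$ on whose complement both $\vphi$ and $\psi$ are continuous; by Dini's theorem the monotone convergences $\vphi_j \downarrow \vphi$ and $\psi_j \downarrow \psi$ are uniform on the compact set $X \setminus G$. Choosing $j$ large depending on $\eta$, this yields the sandwich
$$
U(\veps, t - \eta) \setminus G \;\subset\; U_j(\veps, t) \setminus G \;\subset\; U(\veps, t + \eta) \setminus G.
$$
Since the equivalence of capacities (Lemma~\ref{lem:cap-equiv}) implies that Hessian measures of bounded $(\om,m)$-sh functions charge any set of $m$-capacity $<\delta$ by at most $O(\delta)$, the contribution of $G$ to either side of the displayed inequality is negligible; combined with the weak convergence of the measures this transfers the estimate from $U_j(\veps,t)$ to $U(\veps, t\pm\eta)$ modulo $O(\delta)$, and sending successively $j\to\infty$, $\delta\to 0$, $\eta\to 0$ produces the required inequality for $(\vphi, \psi)$.
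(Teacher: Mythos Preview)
Your approach is essentially the one the paper points to: it explicitly says that quasi-continuity \cite[Theorem~4.9]{KN25} together with the capacity comparison (Lemma~\ref{lem:cap-equiv}) removes the continuity hypothesis from \cite[Theorem~3.7]{KN3}, without giving further details. Your smooth approximation plus Dini/capacity cutoff is a natural way to implement this, and the weak-convergence and small-capacity error terms are handled correctly.

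There is, however, one step that is not justified and is not automatic. You assert that $s_j:=\inf_X[\vphi_j-(1-\veps)\psi_j]$ differs from $s_{\rm min}(\veps)$ by $o(1)$. Since $\vphi_j\downarrow\vphi$ while $-(1-\veps)\psi_j\uparrow-(1-\veps)\psi$, the functions $\vphi_j-(1-\veps)\psi_j$ are not monotone in $j$, and only the inequality $\limsup_j s_j\le s_{\rm min}(\veps)$ follows for free (evaluate at a near-minimizer of $\vphi-(1-\veps)\psi$). The reverse inequality $\liminf_j s_j\ge s_{\rm min}(\veps)$ can genuinely fail for decreasing sequences of USC functions combined this way, and your Dini argument on $X\setminus G$ only controls $\inf_{X\setminus G}(\cdot)$, not $\inf_G(\cdot)$; nothing prevents the smooth functions from having their minima inside $G$. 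Without this, the sandwich $U(\veps,t-\eta)\setminus G\subset U_j(\veps,t)\setminus G\subset U(\veps,t+\eta)\setminus G$ does not follow.

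A clean repair is to decouple the two approximations: first fix $\psi_k$ smooth and let $\vphi_j\downarrow\vphi$, so that $\vphi_j-(1-\veps)\psi_k$ is decreasing and $\inf_X$ passes to the limit; this yields the inequality for the pair $(\vphi,\psi_k)$ with $\sigma_k:=\inf_X[\vphi-(1-\veps)\psi_k]$ in place of $s_{\rm min}$. Since $\sigma_k\uparrow$ and $\sigma_k\le s_{\rm min}$, one has $U(\veps,t)\subset\{\vphi<(1-\veps)\psi_k+\sigma_k+t+(s_{\rm min}-\sigma_k)+(1-\veps)(\psi_k-\psi)\}$, and the extra terms are controlled on $X\setminus G$ by Dini for $\psi_k\downarrow\psi$; then let $k\to\infty$. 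Alternatively, and closer to what the paper presumably intends, one can revisit the proof of \cite[Theorem~3.7]{KN3} itself and replace each use of continuity by quasi-continuity, avoiding approximation altogether.
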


Applying this comparison principle to the case $\psi =0$, $\vphi\in SH_m(\om)\cap L^\infty(X)$ for  a fixed $0<\veps<1$ and $t\in (0, \veps^3/16\Bb)$ we have 
\begin{cor}\label{cor:post-mass} If $\vphi \in SH_m(\om) \cap L^\infty(X)$, then
$
	\int_X H_m(\vphi) >0.
$
\end{cor}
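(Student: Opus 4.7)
The plan is to apply Theorem~\ref{thm:weak-cp} in the suggested specialization $\psi \equiv 0$ and then observe that the left-hand side of the resulting inequality is strictly positive.

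First, with $\psi \equiv 0$, one unpacks the notation: $s_{\rm min}(\veps) = \inf_X \vphi$, which is finite since $\vphi \in L^\infty(X)$, and
\[
	U(\veps,t) = \{\vphi < \inf_X \vphi + t\}.
\]
Also, $(1-\veps)\psi \equiv 0$, so $\om_{(1-\veps)\psi}^m \wed \om^{n-m} = \om^n$. The comparison principle then gives, for any fixed $0<\veps<1$ and any $0<t<\veps^3/16\Bb$,
\[
	\int_{U(\veps,t)} \om^n \;\leq\; \left(1 + \frac{Ct}{\veps^m}\right) \int_{U(\veps,t)} H_m(\vphi).
\]

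Second, one argues that the left-hand side is strictly positive. Since $\vphi$ is upper semicontinuous and bounded, $\inf_X \vphi$ is finite and the sublevel set $U(\veps,t) = \{\vphi < \inf_X \vphi + t\}$ is a non-empty open subset of $X$ (non-empty because $\vphi$ must come within $t$ of its infimum somewhere). As $\om^n$ is a smooth strictly positive volume form on $X$, any non-empty open set has strictly positive $\om^n$-measure; in particular $\int_{U(\veps,t)} \om^n > 0$. Combining with the displayed inequality yields $\int_{U(\veps,t)} H_m(\vphi) > 0$, hence $\int_X H_m(\vphi) > 0$.

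There is no real obstacle here; the entire argument is a direct specialization of Theorem~\ref{thm:weak-cp} plus the elementary fact that the sublevel set just above the infimum of a bounded upper semicontinuous function is a non-empty open set. The one thing worth checking carefully is that the restriction $t<\veps^3/16\Bb$ in the comparison principle is harmless: one simply chooses, for example, $\veps = 1/2$ and any sufficiently small admissible $t>0$, which is always possible since $\Bb$ is a fixed geometric constant.
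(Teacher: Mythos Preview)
Your proof is correct and follows exactly the approach indicated in the paper, which simply states that the corollary follows from Theorem~\ref{thm:weak-cp} applied with $\psi=0$, a fixed $0<\veps<1$, and $t\in(0,\veps^3/16\Bb)$. You have faithfully filled in the details the paper leaves implicit, namely that $U(\veps,t)$ is a non-empty open set and hence has positive $\om^n$-volume.
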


An interesting consequence of the convergence theorem \cite[Lemma~5.1]{KN25} is  the following inequality. 

\begin{cor}\label{cor:max-prin} For  $\vphi, \psi \in SH_m(\om) \cap L^\infty(X)$,  
\[\label{eq:max-prin}
	H_m(\max\{\vphi, \psi\}) \geq {\bf 1}_{\{\vphi>\psi\}} H_m(\vphi) +{\bf 1}_{\{\vphi \leq \psi\}} H_m(\psi).
\]
Moreover, if $\vphi \leq \psi$, then 
\[\label{eq:max-prin-cor}
	{\bf 1}_{\{\vphi =\psi \}} H_m(\vphi) \leq {\bf 1}_{\{\vphi =\psi \}} H_m(\psi).
\]
\end{cor}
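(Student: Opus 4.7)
The plan is in three stages: first, establish a strict form of \eqref{eq:max-prin} with $\{\vphi<\psi\}$ replacing $\{\vphi\leq\psi\}$ via smooth approximation; second, derive \eqref{eq:max-prin-cor} from this strict form by perturbing $\psi$ by $-\veps$; third, upgrade the strict form to \eqref{eq:max-prin} by applying \eqref{eq:max-prin-cor} to the pair $(\psi,\max\{\vphi,\psi\})$.

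For the strict form, use the global smoothing \cite[Lemma~3.20]{KN3} to pick $\vphi_j,\psi_j\in SH_m(\om)\cap C^\infty(X)$ with $\vphi_j\downarrow\vphi$ and $\psi_j\downarrow\psi$. Continuity of these approximants makes $A_j=\{\vphi_j>\psi_j\}$ and $B_j=\{\vphi_j<\psi_j\}$ open, and on each of them $\max\{\vphi_j,\psi_j\}$ coincides locally with $\vphi_j$ or $\psi_j$; hence the local formula \eqref{eq:hess-measure} yields
\[
H_m(\max\{\vphi_j,\psi_j\})\geq {\bf 1}_{A_j}H_m(\vphi_j)+{\bf 1}_{B_j}H_m(\psi_j).
\]
By Proposition~\ref{prop:conv-decreasing} each Hessian-measure sequence converges weakly to the Hessian measure of its pointwise limit. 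Every $x\in\{\vphi<\psi\}$ satisfies $\vphi_j(x)\geq\vphi(x)$ and $\psi_j(x)\to\psi(x)>\vphi(x)$, so eventually $x\in B_j$; combining this pointwise eventuality with the lower semicontinuity of ${\bf 1}_{B_j}$ against positive continuous test functions (and the symmetric argument for $A_j$) allows one to pass to the limit and obtain the \emph{strict} inequality
\[
H_m(\max\{\vphi,\psi\})\geq {\bf 1}_{\{\vphi>\psi\}}H_m(\vphi)+{\bf 1}_{\{\vphi<\psi\}}H_m(\psi).
\]

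To derive \eqref{eq:max-prin-cor}, assume $\vphi\leq\psi$ and apply the strict form to the pair $(\vphi,\psi-\veps)$ (noting $H_m(\psi-\veps)=H_m(\psi)$). As $\veps\downarrow0$, $\max\{\vphi,\psi-\veps\}\downarrow\psi$, so Proposition~\ref{prop:conv-decreasing} gives weak convergence of the left-hand side to $H_m(\psi)$; meanwhile $\{\vphi>\psi-\veps\}\searrow\{\vphi=\psi\}$ and $\{\vphi<\psi-\veps\}\nearrow\{\vphi<\psi\}$, so passage to the limit produces $H_m(\psi)\geq {\bf 1}_{\{\vphi=\psi\}}H_m(\vphi)+{\bf 1}_{\{\vphi<\psi\}}H_m(\psi)$, and restriction to $\{\vphi=\psi\}$ is exactly \eqref{eq:max-prin-cor}. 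Finally, \eqref{eq:max-prin-cor} applied to the pair $(\psi,\max\{\vphi,\psi\})$—using $\psi\leq\max\{\vphi,\psi\}$ and $\{\psi=\max\{\vphi,\psi\}\}=\{\vphi\leq\psi\}$—gives ${\bf 1}_{\{\vphi\leq\psi\}}H_m(\psi)\leq {\bf 1}_{\{\vphi\leq\psi\}}H_m(\max\{\vphi,\psi\})$; adding this to the restriction of the strict form to $\{\vphi>\psi\}$ recovers \eqref{eq:max-prin}.

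The principal technical obstacle is the limit argument producing the strict form: because $\vphi,\psi$ are only upper semicontinuous, $\{\vphi<\psi\}$ is a Borel set that is generally neither open nor closed, so the Portmanteau lemma does not apply directly. One must exhaust $\{\vphi<\psi\}$ by open subsets that eventually lie in $B_j$, carefully combining the lower semicontinuity of ${\bf 1}_{B_j}$ with the weak convergence $H_m(\psi_j)\to H_m(\psi)$ to push the indicator through the limit.
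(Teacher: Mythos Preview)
Your three--stage architecture is sound and matches the spirit of the paper, which simply attributes the corollary to the decreasing convergence theorem \cite[Lemma~5.1]{KN25} without further detail; Stages~2 and~3 are essentially correct. One slip: in Stage~2, as $\veps\downarrow 0$ the sequence $\max\{\vphi,\psi-\veps\}$ \emph{increases} to $\psi$ (since $\vphi\leq\psi$), so you must invoke Proposition~\ref{prop:conv-increasing} rather than Proposition~\ref{prop:conv-decreasing}; the rest of that step is fine because on the right--hand side only the indicators vary and they converge monotonically against the \emph{fixed} measures $H_m(\vphi)$, $H_m(\psi)$.

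The genuine gap is in Stage~1. You correctly flag that passing ${\bf 1}_{B_j}$ through the weak limit $H_m(\psi_j)\to H_m(\psi)$ is the crux, but your proposed remedy---exhausting $\{\vphi<\psi\}$ by Euclidean--open sets that eventually lie in $B_j$---does not work: $\{\vphi<\psi\}$ is merely Borel (difference of two u.s.c.\ functions), and while each point of it eventually enters $B_j$, there is no uniformity allowing an open exhaustion of this type. The sets $G_k=\{\vphi_k<\psi\}$ do satisfy $G_k\subset B_j$ for $j\geq k$ and $G_k\nearrow\{\vphi<\psi\}$, but they are \emph{not} open (since $\vphi_k-\psi$ is l.s.c.). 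The missing ingredient is quasi--continuity of $\vphi,\psi$ with respect to $cap_m$ (available from \cite[Theorem~4.9]{KN25}): it makes $\{\vphi<\psi\}$ \emph{quasi}--open, i.e.\ it differs from a genuine open set by a set of arbitrarily small capacity, and since the Hessian measures of bounded $(\om,m)$--sh functions are dominated by $cap_m$, this is enough to push the indicator through the limit. Equivalently, one first establishes locality of $H_m$ in the plurifine topology (if $u=v$ on a plurifine--open set then $H_m(u)=H_m(v)$ there) and then observes that $\max\{\vphi,\psi\}=\psi$ on the plurifine--open set $\{\vphi<\psi\}$. Without this step your Stage~1 argument is incomplete.
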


\begin{remark}
The first inequality was due to Demailly in pluripotential theory for psh functions and  it is often called  the maximum principle for $(\om,m)$-sh functions in \cite{GLu22, GLu25}. There is a different way to derive the above weak comparison principle via Corollary~\ref{cor:max-prin}. We refer the interested readers to Guedj and Lu \cite[Theorem~1.5]{GLu22} for a proof in the case $m=n$ which can be adapted easily to our case.
\end{remark}

Another useful result in the balayage procedure is as follows. 

\begin{prop}\label{prop:lift} Let $B \subset X$ be a small coordinate ball in $X$ and $\vphi \in SH_m(\om) \cap L^\infty(X)$. There exists $\wh\vphi \in SH_m(\om) \cap L^\infty(X)$ such that $\wh\vphi\geq \vphi$ and
$$
	\wh\vphi \equiv  \vphi \quad\text{in } X\setminus B, \quad
	H_m(\wh\vphi) =0 \text{ in } B.
$$
\end{prop}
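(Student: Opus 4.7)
The plan is to realize $\wh\vphi$ as the upper envelope of the natural family
\[
\cF := \{u \in SH_m(\om) \cap L^\infty(X) : u \leq \vphi \text{ on } X\setminus B\},
\]
and then verify the required properties by a standard Perron/balayage argument adapted to the $(\om,m)$-sh setting. First I would observe that $\cF$ is nonempty (it contains $\vphi$) and uniformly bounded above: if some sequence $u_j \in \cF$ had $\sup_X u_j \to +\infty$, then the normalizations $v_j := u_j - \sup_X u_j$ would satisfy $\sup_X v_j = 0$ while $v_j \to -\infty$ on $X\setminus B$. By $L^1$-compactness of normalized $(\om,m)$-sh functions, a subsequential limit $v_\infty \in SH_m(\om)$ would have to equal $-\infty$ on the positive-measure set $X \setminus B$, contradicting $v_\infty \in L^1(X)$.

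Setting $\wh\vphi := (\sup_{u \in \cF} u)^*$, the Choquet-type lemma in the $(\om,m)$-sh setting (cf.\ \cite{KN3, KN25}) yields $\wh\vphi \in SH_m(\om) \cap L^\infty(X)$. Since $\vphi \in \cF$, one has $\wh\vphi \geq \vphi$ everywhere; and since every $u \in \cF$ satisfies $u \leq \vphi$ on $X\setminus B$, upper semi-continuity of $\vphi$ propagates to the regularization to give $\wh\vphi \leq \vphi$, hence $\wh\vphi \equiv \vphi$, on $X \setminus B$.

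The crucial step is $H_m(\wh\vphi) = 0$ in $B$, handled by a local balayage trick. For any small coordinate ball $B_0 \subset\subset B$, I would solve the local homogeneous Hessian Dirichlet problem: find $\wt u \in SH_m(B_0, \om) \cap L^\infty(B_0)$ with $H_m(\wt u) = 0$ in $B_0$ whose boundary values match $\wh\vphi|_{\partial B_0}$ in the Perron (generalized) sense, using the local theory in \cite{KN25}. The weak comparison principle (Theorem~\ref{thm:weak-cp}) gives $\wt u \geq \wh\vphi$ on $B_0$, so the glued function
\[
\wh\vphi_0 := \begin{cases} \wt u & \text{in } B_0, \\ \wh\vphi & \text{on } X \setminus B_0,\end{cases}
\]
lies in $SH_m(\om) \cap L^\infty(X)$ and, as $B_0 \subset B$, still belongs to $\cF$. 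Maximality then forces $\wh\vphi_0 \leq \wh\vphi$, hence $\wt u = \wh\vphi$ in $B_0$, so $H_m(\wh\vphi) = H_m(\wt u) = 0$ in $B_0$. As $B_0$ was arbitrary, $H_m(\wh\vphi) = 0$ throughout $B$.

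The main obstacle I expect is the matching of boundary values and the glueing across $\partial B_0$: since $\wh\vphi$ need not be continuous on $\partial B_0$, the local Dirichlet problem must be solved in the Perron sense and one must verify that the boundary values of $\wt u$ agree with $\wh\vphi$ in the USC sense so the glued function is globally $(\om,m)$-sh. This is routinely handled by exhausting $B_0$ from inside with sub-balls on whose boundaries $\wh\vphi$ is controlled, or by invoking local continuous approximation together with Proposition~\ref{prop:conv-decreasing} to pass the Hessian measure to the limit.
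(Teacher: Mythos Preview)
Your proposal is correct and is exactly the ``standard'' Perron/balayage argument the paper has in mind in its one-line proof; the only cosmetic difference is that the paper invokes \cite[Theorem~3.15]{GN18} directly on $B$ (solve the homogeneous Dirichlet problem there with boundary data $\vphi|_{\d B}$ and glue), whereas you first build the global envelope and then balayage on sub-balls $B_0\subset\subset B$. One genuine correction: your appeal to Theorem~\ref{thm:weak-cp} to obtain $\wt u \geq \wh\vphi$ on $B_0$ is misplaced, since that statement is the \emph{global} weak comparison principle on the compact manifold $X$ and its hypotheses and conclusion do not localize to a ball; what you need here is the \emph{local} comparison/domination principle for bounded $m$-$\om$-sh functions on $B_0$, which is available in \cite{KN25} (or, in the form used by the paper, via \cite{GN18}). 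The boundary-matching obstacle you flag is real but routine once one works with the local Dirichlet solution from \cite{GN18}, which is precisely why the paper cites that result.
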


\begin{proof}The proof is standard provided the solution to the homogeneous equation of $(\om,m)$-sh function in  small balls \cite[Theorem~3.15]{GN18}.
\end{proof}

\subsection{Cauchy-Schwarz's inequality}

Let $h$ be a smooth real-valued function and let  $\phi, \psi$ be Borel functions. We will need the following two versions of Cauchy-Schwarz's inequality  \cite[Lemma~2.3]{KN25} and \cite[Lemma~2.4]{KN25}   in this setting.  

The first one is  often applied  for the case of positive forms $T= \ga^{s} \wed \om^{n-m + \ell}$, where $\ga\in \Ga_m(\om)$ and  $0\leq s, \ell\leq m-1$ and $s+\ell = m-1$.

\begin{lem}\label{lem:CS-classic}  Let $T$ be a positive current of bidegree $(n-2,n-2)$. There exists a uniform constant $A$ depending on $\om,m,n$ such that
$$\begin{aligned}
&	\left |\int_X \phi\psi\; dh \wed d^c \om \wed T \right|^2 
&\leq 	A\int_X |\phi|^2 \;dh\wed d^c h \wed \om\wed T 
  \int_X |\psi|^2\; \om^2 \wed T. 
\end{aligned}$$
\end{lem}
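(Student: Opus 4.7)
The plan is to combine the Cauchy--Schwarz inequality for positive currents from \cite[Lemma~2.3]{KN25} with the curvature bound \eqref{eq:curv-bound}. The key observation is that $d^c\om$, though a $3$-form, is controlled in size by $\om^{3/2}$ through the inequality $\ii\,\d\om\wed\dbar\om\leq C\Bb\,\om^3$, which is an algebraic consequence of \eqref{eq:curv-bound}.

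Concretely, since $T$ has bidegree $(n-2,n-2)$, only the $(2,2)$-components of the $4$-form $dh\wed d^c\om$ survive after wedging with $T$. Writing $d^c\om = \tfrac{\ii}{2\pi}(\dbar-\d)\om$ gives
$$
dh\wed d^c\om\wed T = \tfrac{\ii}{2\pi}\bigl(\d h\wed\dbar\om - \dbar h\wed\d\om\bigr)\wed T,
$$
so it suffices to estimate $\bigl|\int_X \phi\psi\,\ii\,\d h\wed\dbar\om\wed T\bigr|$. In a local unitary coframe $\{\theta^j\}$ for $\om$ I would decompose $\dbar\om = \sum_j g_j\wed\overline{\theta^j}$, where $g_j$ are $(1,1)$-forms whose pointwise size satisfies $\ii\sum_j g_j\wed\overline{g_j}\leq C\Bb\,\om^2$ by the curvature bound. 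For each $j$, applying the classical Cauchy--Schwarz componentwise to $\ii\,\d h\wed\overline{\theta^j}\wed g_j\wed T$ (treating $g_j\wed T$ as a bounded linear combination of positive currents, after splitting $g_j$ into positive and negative $(1,1)$-parts) and then summing over $j$ in $\ell^2$, the $\phi$-factor should collapse to at most $C\Bb\int_X |\phi|^2\,dh\wed d^ch\wed\om\wed T$, and the $\psi$-factor to at most $C\Bb\int_X |\psi|^2\,\om^2\wed T$ via the identity $\ii\sum_j\theta^j\wed\overline{\theta^j}=\om$ in the unitary frame. Collecting constants produces the lemma with $A=A(\om,m,n)$.

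The main obstacle is that $d^c\om$ is not $d^c$ of a scalar function, so the cleanest Cauchy--Schwarz inequality for positive currents does not apply verbatim; this forces the local frame decomposition of $\dbar\om$ and a careful accounting of which intermediate currents are actually positive versus only bounded by positive ones. The curvature bound \eqref{eq:curv-bound} is the decisive ingredient, entering precisely to convert $\dbar\om$ factors into $\om$ factors after Cauchy--Schwarz, which is what produces the extra $\om$ on the $\phi$-side and the $\om^2$ on the $\psi$-side. A more streamlined proof would invoke a mixed version of Cauchy--Schwarz (likely \cite[Lemma~2.4]{KN25}) formulated to accommodate a $(2,1)$-form in place of $d^c$ of a scalar, reducing the present lemma to a single application of that inequality followed by direct use of \eqref{eq:curv-bound}.
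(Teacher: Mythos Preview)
The paper does not prove this lemma; it is imported verbatim from \cite[Lemma~2.3]{KN25} (the inequality is local in nature, so compactness of $X$ adds nothing). Note in particular that your opening sentence risks circularity: the paper identifies \cite[Lemma~2.3]{KN25} with precisely the statement at hand, so if that reference is this inequality rather than a more primitive Cauchy--Schwarz, you are invoking what you set out to prove.

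Taking your sketch on its own terms, the plan is sound but the chosen decomposition creates an avoidable obstacle. Writing $\dbar\om=\sum_j g_j\wed\bar\theta^j$ with $g_j$ of bidegree $(1,1)$ forces you to treat $g_j\wed T$, which is not positive; the ad hoc ``split $g_j$ into positive and negative $(1,1)$-parts'' you propose would eventually succeed but is clumsy and obscures the constant. The cleaner route is to decompose along the $(1,0)$-factor instead: in the same unitary coframe write $\dbar\om=\sum_k \theta^k\wed\delta_k$ with $\delta_k$ of bidegree $(0,2)$, so that $\d h\wed\dbar\om\wed T=\sum_k(\d h\wed\theta^k)\wed\delta_k\wed T$. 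Each summand is now $\alpha_k\wed\bar\beta_k\wed T$ with $\alpha_k=\d h\wed\theta^k$ and $\beta_k=\bar\delta_k$ both $(2,0)$-forms, and the elementary Cauchy--Schwarz inequality for the positive $(n-2,n-2)$-current $T$ applies directly, with no sign issues to repair. Summing over $k$ collapses the $\phi$-side to a constant times $\int_X|\phi|^2\,dh\wed d^ch\wed\om\wed T$ (since $\sum_k \ii\,\theta^k\wed\bar\theta^k=\om$), while the $\psi$-side is bounded by $A\int_X|\psi|^2\,\om^2\wed T$ via a pointwise estimate of the shape $\sum_k\beta_k\wed\bar\beta_k\leq A\,\om^2$ as positive $(2,2)$-forms; this last bound is exactly where \eqref{eq:curv-bound} enters.
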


On the other hand, the second one can be applicable for a $(n-2,n-2)$-form $\ga^{m-1}\wed \om^{n-m-1}$, where $\ga \in \Ga_m(\om)$, may not be positive in the integrand  of the left hand side. 
 \begin{lem} \label{lem:CS} 
There exists a uniform constant $A$ depending on $\om, n,m$ such for every $\ga \in \Ga_{m}(\om)$,
$$\begin{aligned}
&	\left |\int_X \phi\psi\; dh \wed d^c \om \wed \ga^{m-1}  \wed  \om^{n-m-1} \right|^2 \\
&\leq 	A \int_X |\phi|^2 \;dh\wed d^c h \wed \ga^{m-1} \wed \om^{n-m}  \times  \int_X |\psi|^2\;\ga^{m-1} \wed \om^{n-m+1}.
\end{aligned}$$
 \end{lem}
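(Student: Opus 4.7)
The plan is to reduce the inequality to a pointwise algebraic estimate at every point of $X$ and then apply the usual integral Cauchy--Schwarz. The key difference from Lemma~\ref{lem:CS-classic} is that the ``ambient'' form $\ga^{m-1}\wed\om^{n-m-1}$ is \emph{not} a positive $(n-2,n-2)$-form: for $\ga\in\Ga_m(\om)$, G\aa rding-type positivity only guarantees that one extra wedge with $\om$ produces a positive form, namely $\ga^{m-1}\wed\om^{n-m}$ is a positive $(n-1,n-1)$-form (from $\sig_{m-1}(\la_{\hat \jmath})\ge 0$ on $\overline{\Ga_m}$) and $\ga^{m-1}\wed\om^{n-m+1}\ge 0$ as a top form. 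These are precisely the positive forms on the right-hand side of the claim.

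By a standard convolution and partition-of-unity approximation, one may assume $\phi,\psi,h$ and $\ga$ are smooth with $\ga$ strictly in $\Ga_m(\om)$; the general statement then follows by passage to the limit. At a fixed point $p\in X$, choose an $\om$-unitary coframe $\{dz^i\}$ simultaneously diagonalizing $\ga$, so that $\ga = \ii\sum_i\la_i\, dz^i\wed d\bar z^i$ with $\la\in\overline{\Ga_m}$, and set $\be_i := \ii\, dz^i\wed d\bar z^i$. Expanding $\ga^{m-1}\wed\om^{n-m-1}$ in the monomial basis $\{\be_K:|K|=n-2\}$ yields coefficients of $\sig_{m-1}(\la|_K)$-type, which need not be non-negative. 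Only the $(2,2)$-component of $dh\wed d^c\om$ contributes when wedged with this $(n-2,n-2)$-form, and this component decomposes as a sum of monomials whose coefficients are products of components of $dh$ with the torsion components of $\om$. The latter are uniformly controlled by $\Bb$ via \eqref{eq:curv-bound} (in the form $d\om\wed d^c\om\leq \Bb\om^3$).

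A monomial-by-monomial application of the elementary numerical Cauchy--Schwarz then produces a pointwise inequality bounding the coefficient (against $\om^n$) of $\phi\psi\,dh\wed d^c\om\wed\ga^{m-1}\wed\om^{n-m-1}$ by the geometric mean of the coefficients of $|\phi|^2\,dh\wed d^c h\wed\ga^{m-1}\wed\om^{n-m}$ and $|\psi|^2\,\ga^{m-1}\wed\om^{n-m+1}$. The crucial algebraic observation is that adding an extra $\om$-factor on the $\phi$-side pairs each index-$K$ monomial with those indexed by $K\cup\{j\}$, turning the Cauchy--Schwarz into one involving only \emph{non-negative} coefficients; the extra $\om$-factor on the $\psi$-side plays the symmetric role. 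Integrating and invoking the integral Cauchy--Schwarz separates $\phi$ from $\psi$ and gives the stated bound, with the constant $A$ absorbing the dependence on $n, m$ and on $\Bb$.

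The main obstacle is the algebraic bookkeeping in the second step: one must verify that every non-positive monomial on the left is indeed dominated, after the added $\om$-wedge, by a positive monomial on the right, and that the torsion of $\om$ is absorbed into a uniform constant using only \eqref{eq:curv-bound}. Once this pointwise estimate is secured, the passage to the integral inequality is routine.
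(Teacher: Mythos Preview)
The paper does not actually give a proof of this lemma; it is quoted verbatim from \cite[Lemma~2.4]{KN25}. So there is no ``paper's own proof'' to compare against, only the reference. Your strategy---diagonalize $\ga$ against $\om$ at each point, expand in monomials, apply numerical Cauchy--Schwarz with weights, then integrate---is indeed the natural one and almost certainly the route taken in \cite{KN25}.

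That said, what you have written is a plan, not a proof, and you flag this yourself: the ``crucial algebraic observation'' is asserted, not verified. Let me make precise what is missing. After diagonalizing at a point, the coefficient of the left-hand integrand against $\om^n$ is (up to uniform torsion constants) a sum of terms of the form $h_p\,T_{pq}\,\sig_{m-1}(\la_{\widehat{p,q}})$, where $h_p=\d_p h$, $T_{pq}$ is a torsion coefficient bounded by $\Bb$, and $\la_{\widehat{p,q}}$ denotes $\la$ with the $p$-th and $q$-th entries removed. The right-hand integrands give, respectively, $\sum_j |h_j|^2 \sig_{m-1}(\la_{\hat\jmath})$ and $\sig_{m-1}(\la)$. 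Running Cauchy--Schwarz with weights $a_p=\sig_{m-1}(\la_{\hat p})$ reduces everything to the scalar inequality
\[
\sig_{m-1}(\la_{\widehat{p,q}})^2 \;\le\; C_{n,m}\,\sig_{m-1}(\la_{\hat p})\,\sig_{m-1}(\la)
\quad\text{for all }\la\in\overline{\Ga_m},\ p\neq q.
\]
This is the heart of the matter, and it is \emph{not} obvious: the left side can be negative (as you note), and the right side can degenerate when $\sig_{m-1}(\la_{\hat p})\to 0$. It does hold---one way is to normalize $\sig_1(\la)=1$, observe that $\overline{\Ga_m}\cap\{\sig_1=1\}$ is compact for $m\ge 2$ since $\sum\la_i^2=\sig_1^2-2\sig_2\le 1$, and then analyze the boundary behavior where $\sig_{m-1}(\la_{\hat p})=0$ (which forces $\la_{\hat p}$ itself to be degenerate in a controlled way)---but this G\aa rding-type argument needs to be written out. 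Until it is, the proof has a genuine gap at exactly the point you identified as ``the main obstacle.''
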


\subsection{Uniform constants and integral symbols} 
The uniform constants 
\[ \label{eq:const-short} A = A(\om,m, n) \text{ or } C = C(\om,m,n) \] appearing here and several times below are generic   they may be different from line to line. Moreover, in Section~\ref{sec:basic-int} and Section~\ref{sec:uni-int}  the integrals are always considered on the whole manifold $X$, so to simplify the notation we shall write
\[ \label{eq:int-short} \int f \eta:= \int_X f \eta \]
where  $f$ is a Borel function and $\eta$ is a smooth $(n,n)$-form.

\section{Basic integral estimates}
\label{sec:basic-int}

In this section we extend the integral estimates in \cite[Section~2.4]{KN25} to a compact Hermitian manifold. The basic idea is the same however the computations are slightly different. There is no boundary o a compact manifold so the integration by parts is easier without boundary terms. On the other hand, there will be extra terms as the differential operator $dd^c$ will act on more terms than the one in the local setting. Because of this we provide all details of  the proofs. Thanks to the convergence theorems in Section~\ref{sec:hess-measure} we may assume that all considered functions are smooth.

Let $-1 \leq v \leq u \leq 0$ be smooth $(\om,m)$-sh functions. Let $\phi$ be a smooth $(\om, m)$-sh function such that $-1\leq \phi \leq 0$. Denote
$$
	\om_\phi = \om + dd^c\phi, \quad h = u-v.
$$ 
We consider the integrals containing both potential $\phi$ and $v$, 
\[ \label{eq:eqks}
	e_{(q,k,s)} := \int h^{q+1} \om_\phi^k \wed \om_v^s \wed \om^{n-k-s},
\]
where $q\geq 0$, the integers $0\leq k\leq m$ and $0\leq s \leq m-k$. Notice that we are using the convention of uniform constants \eqref{eq:const-short} and  integral symbols \eqref{eq:int-short} in this section.

Our goal is to bound
$$
	e_{(q,m,0)} = \int h^{q+1} \om_\phi^m \wed \om^{n-m}
$$
by the integrals containing only potential $v$
$$
	e_{(r,0,i)} = \int h^{r+1} \om_v^i \wed \om^{n-i},
$$
where $i= 0,...,m$ and $0\leq r <q$. In other words, we will replace the potential $\phi$ by $v$. In the K\"ahler setting it is  relatively simply done via integration by parts as we do not have to deal with the torsion terms $dd^c \om$ and $d\om \wed d^c \om$. In the Hermitian setting  these terms  make the estimates  complicated. Following \cite[Section~2.4]{KN25}, we  use  variants of the  Cauchy-Schwarz inequality  to deal with the torsion terms appearing in integration by parts while replacing $\om_\phi$ by $\om_v$. 

The crucial estimates to deal with possibly non-positive forms come from \cite[Lemma~2.3]{KN3}. Namely, we have  for $1\leq k \leq m-1$, 
\[\label{eq:wed-prod-bound-b1}
	 dd^c (\om_\phi^k \wed \om^{n-k-1}) \leq  \Bb \sum_{\vka =0}^2 \om_\phi^{k-\vka} \wed \om^{n-k+\vka}  
\]
Moreover, for $0\leq k+s \leq m-1$,
\[\label{eq:wed-prod-bound-b2}
	dd^c [\om_\phi^k \wed \om_v^s \wed \om^{n-k-s-1}] \leq \Bb [\om_\phi+\om_v]^{k+s} \wed \om^{n-k-s}.
\]
Another useful inequality is as follows.
For $1\leq k \leq m$,
\[\label{rmk:equiv-forms-e}
	\sum_{i=0}^{k} \om_\phi^i \wed \om_v^{k-i} \wed \om^{n-k} \leq (\om_\phi +\om_v)^k \wed \om^{n-k} \leq C \sum_{i=0}^{k} \om_\phi^i \wed \om_v^{k-i} \wed \om^{n-k},
\]
where $C = C(\om,m,n)$ is a uniform constant. 
It follows that 
$$
	\sum_{i=0}^k e_{(q, i, k-i)} \leq \int h^{q+1} (\om_\phi +\om_v)^{k} \wed \om^{n-k} \leq C \sum_{i=0}^k e_{(q,i,k-i)}.
$$

We are ready to proceed with the bounds for $e_{(q,k,s)}$. As in \cite{KN25} we need to consider three cases as follows.

\begin{itemize}
\item Case 1: $k+s =m$,
\item Case 2: $k+s =m-1$,
\item Case 3: $k+s \leq m-2$.
\end{itemize}
The following lemma is the key technical tool which will be used repeatedly below.

\begin{lem} \label{lem:IBP-CS} Let $p\geq 1$ and $0\leq k \leq m-1$. There exists a constant $C = C(\om, m,n)$ such that
\begin{itemize}
\item
[(a)] for $0\leq s + k \leq m-1$,
\[ \begin{aligned}
& \int h^{p-1} dh \wed d^c h \wed \om_\phi^k \wed \om_v^s \wed \om^{n-k-s-1}   \\
& \leq  Ce_{(p-1, k,s+1)} + C \sum_{\vka=0}^2 \sum_{i=0}^{k+s-\vka} e_{(p, i, k+s-i-\vka)}.
\end{aligned}\]
Moreover, if $s=0$, we can take $e_{(p, i, k+s-i-\vka)} = e_{(p, i, 0)}$ for all $i$ in the sum.
\item
[(b)] for $0\leq s +k\leq m-3$,
\[\begin{aligned}
&	\int h^{p-1} dh \wed d^c h \wed \om_\phi^k \wed \om_v^s \wed \om^{n-k-s-1}  \\
&\leq e_{(p-1, k,s+1)} 
 + C \sum_{\vka=0}^1 \sum_{\vka'=0}^1 e_{(p, k-\vka,s-\vka') .} 
\end{aligned}
\]
\end{itemize}
\end{lem}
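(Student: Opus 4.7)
The plan is to prove both parts by performing two successive integrations by parts on the closed manifold $X$, reducing the left-hand side to a linear combination of $\int h^p\, dd^c h \wed T$ and $\int h^{p+1}\, dd^c T$, which are then estimated using the refined curvature bounds \eqref{eq:curv-bound}, \eqref{eq:wed-prod-bound-b1}, \eqref{eq:wed-prod-bound-b2}, and where necessary the Cauchy-Schwarz inequalities (Lemmas~\ref{lem:CS-classic}, \ref{lem:CS}) to handle the torsion contributions. Concretely, writing $h^{p-1} dh = \tfrac{1}{p} d(h^p)$ and applying Stokes' theorem (no boundary on $X$), then writing $h^p d^c h = \tfrac{1}{p+1} d^c(h^{p+1})$ and applying Stokes again together with $d^c d = -dd^c$, one obtains
\[
\int h^{p-1} dh \wed d^c h \wed T \;=\; -\tfrac{1}{p}\int h^p\, dd^c h \wed T \;+\; \tfrac{1}{p(p+1)}\int h^{p+1}\, dd^c T,
\]
where $T = \om_\phi^k \wed \om_v^s \wed \om^{n-k-s-1}$.

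For the first integral, expand $dd^c h = \om_u - \om_v$. Under $k + s \leq m - 1$, the wedge $\om_u \wed T$ is a non-negative $(n,n)$-form (a product of $k+s+1 \leq m$ factors from $\overline{\Gamma_m(\om)}$ with $\om^{n-k-s-1}$), so one drops $-\int h^p \om_u \wed T \leq 0$ as an upper bound and retains only $\int h^p \om_v \wed T = e_{(p-1, k, s+1)}$. For $\int h^{p+1} dd^c T$, I would expand $dd^c T$ by iterated Leibniz, using the identities $d\om_\phi = d\om_v = d\om$ and $dd^c \om_\phi = dd^c \om_v = dd^c \om$. Each derivative applied to $\om_\phi, \om_v$, or $\om$ yields either a $dd^c \om$ torsion (trading one $(\om,m)$-positive factor for $\om^2$) or, via the intermediate $d^c T$-step, a $d\om \wed d^c \om$ torsion (trading two such factors for $\om^3$); both are controlled by~\eqref{eq:curv-bound}. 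For part~(a) this produces the coarse bound $dd^c T \leq \Bb \sum_{\kappa=0}^{2} (\om_\phi + \om_v)^{k+s-\kappa} \wed \om^{n-k-s+\kappa}$, which after integration against $h^{p+1}$ and expansion via~\eqref{rmk:equiv-forms-e} is exactly $C\sum_{\kappa=0}^2\sum_i e_{(p, i, k+s-i-\kappa)}$.

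For part (b), the stronger hypothesis $k + s \leq m - 3$ leaves enough headroom that at each step of the expansion every relevant $(\om_\phi + \om_v)^{k+s-\kappa}$ form still lies in a product of at most $m-1$ factors from $\overline{\Gamma_m(\om)}$; hence Lemma~\ref{lem:CS} can be invoked (with $\gamma$ one of the mixed products) to separate the cross terms into the refined $\om_\phi^{k-\kappa} \wed \om_v^{s-\kappa'}$-structure rather than pooling them via~\eqref{rmk:equiv-forms-e}, yielding the bound with $\kappa, \kappa' \in \{0,1\}$. The special case $s = 0$ in part~(a) is immediate: the Leibniz expansion of $dT$ (and hence $dd^c T$) contains no $\om_v^{s-1}$ term, so no $\om_v$ factor enters the estimate.

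The main obstacle I anticipate is the careful bookkeeping in the Leibniz expansion of $dd^c T$: tracking how the two torsion contributions ($dd^c \om$ and $d\om \wed d^c \om$) populate the three $\kappa$-classes in part~(a), and verifying that the refined separation between $\om_\phi$ and $\om_v$ claimed in part~(b) survives the Cauchy-Schwarz step. A secondary technical point is maintaining positivity at each stage, so that each $(n,n)$-form appearing in the bound is non-negative: this requires the total count of $\om_\phi, \om_v$ factors to stay at most $m$, which is ensured by the degree hypotheses $k + s \leq m - 1$ and $k + s \leq m - 3$ in the two parts.
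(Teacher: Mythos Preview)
Your integration-by-parts identity and the treatment of part~(a) are correct and match the paper's argument exactly: the paper also writes $p(p+1)h^{p-1}dh\wedge d^ch = dd^c h^{p+1} - (p+1)h^p\,dd^ch$, drops the positive $\om_u\wedge T$ contribution, integrates by parts to $\int h^{p+1}\,dd^cT$, and then applies the coarse bound \eqref{eq:wed-prod-bound-b2} (or \eqref{eq:wed-prod-bound-b1} when $s=0$). So part~(a) is fine.

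The gap is in part~(b). Your proposed mechanism for the sharper bound is wrong: you invoke Lemma~\ref{lem:CS}, but that lemma controls integrals of the form $\int \phi\psi\,dh\wedge d^c\om\wedge\cdots$, and at this stage you have already integrated out all occurrences of $dh$ --- the remaining quantity is $\int h^{p+1}\,dd^cT$, with $h$ appearing only as a power. There is nothing for Lemma~\ref{lem:CS} to act on, and in any case Cauchy--Schwarz cannot ``unpool'' $(\om_\phi+\om_v)^{k+s-\vka}$ back into the separated form $\om_\phi^{k-\vka}\wedge\om_v^{s-\vka'}$ once you have already applied \eqref{eq:wed-prod-bound-b2}.

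The paper's argument for (b) is simpler and purely pointwise. The hypothesis $k+s\leq m-3$ forces the $\om$-exponent in $T$ to be $n-k-s-1\geq n-m+2$, so one may factor $dd^cT = \eta\wedge\om^{n-m}$ with $\eta$ a sum of wedge products of $\om_\phi$, $\om_v$, and torsion factors $dd^c\om$ or $d\om\wedge d^c\om$. Because the exponent of $\om$ is exactly $n-m$ and the count of $\Ga_m(\om)$-factors in $\eta$ stays below $m$, each term in the Leibniz expansion is already a signed multiple of a positive $(n,n)$-form, and one applies \eqref{eq:curv-bound} directly to the torsion, term by term, \emph{without} first passing through \eqref{eq:wed-prod-bound-b2}. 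This is why the $\om_\phi$ and $\om_v$ exponents stay separated. Your ``headroom'' intuition is pointing in the right direction, but the role of the headroom is to make the pointwise bound \eqref{eq:curv-bound} applicable on each Leibniz term, not to enable a Cauchy--Schwarz step.
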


\begin{proof}  (a) Note first that $0\leq h \leq 1$, and $T := \om_\phi^k \wed \om_v^s \wed \om^{n-k-s-1}$, $\om_u \wed T$ are positive forms for $n-s-k-1 \geq n-m$. Therefore,
$$\begin{aligned}
	p(p+1) h^{p-1} dh\wed d^c h  \wed T
&=	 [dd^c h^{p+1} - (p+1) h^{p} dd^c h] \wed T \\
&\leq		 [dd^c h^{p+1} + (p+1) h^{p} \om_v] \wed T.
\end{aligned}$$  Hence, 
\[\label{eq:grad1}\begin{aligned}
&	\int h^{p-1} dh\wed d^c h \wed \om_\phi^k\wed \om_v^{s} \wed \om^{n-s-k-1} \\
&\leq \int (dd^c h^{p+1} + h^{p} \om_v) \wed \om_\phi^k\wed \om_v^{s}  \wed \om^{n-s-k-1}.
\end{aligned}\]
It remains  to estimate the product involving the first term in the bracket. By integration by parts and the basic inequality \eqref{eq:wed-prod-bound-b2},
\[\label{eq:grad2} \begin{aligned}
&	\int dd^c h^{p+1}  \wed \om_\phi^k\wed \om_v^{s} \wed \om^{n-k-s-1}\\
&	= \int h^{p+1} dd^c \left[ \om_\phi^k\wed\om_v^{s} \wed \om^{n-k-s-1}\right] \\
&\leq C \int h^{p+1} (\om_\phi+ \om_v)^{k+s} \wed \om^{n-k-s} \\
&\quad + C \int h^{p+1} (\om_\phi+ \om_v)^{k+s-1} \wed \om^{n-k-s+1} \\
&\quad + C \int h^{p+1} (\om_\phi+ \om_v)^{k+s-2} \wed \om^{n-k-s+2}, \\
\end{aligned}\]
where if $s=0$, then there is no $\om_v$ appearing on the right hand side as we can use the basic inequality \eqref{eq:wed-prod-bound-b1}.
Combining the last two inequalities the proof of the lemma follows.

(b) The proof is very similar but it is easier. We first have \eqref{eq:grad1}. Then, in the middle integral of  \eqref{eq:grad2}
one can express $$dd^c (\om^{n-s-k-1} \wed \om_\phi^k \wed \om_v^s) = \eta \wed \om^{n-m}
$$ for  smooth $(m-s-k,m-s-k)$-forms $\eta$ which are   the wedge products of $\om_\phi$,  $\om_v$, and the torsion terms either $dd^c\om$ or $d\om \wed d^c\om$. Since $\om_\phi, \om_v \in \Ga_m(\om)$ and the exponent in $\om$ is $n-m$, we can use the bounds \eqref{eq:curv-bound} for the torsion terms. Hence, 
$$\begin{aligned}
	\left|\int h^{p+1} \eta \wed \om_\phi^k \wed \om_v^s \wed \om^{n-m} \right| &\leq C \int h^{p+1} \om_\phi^k \wed \om_v^s \wed \om^{n-k-s} \\
&\quad +  C \int h^{p+1} \om_\phi^{k-1} \wed \om_v^s \wed \om^{n-k-s+1} \\
&\quad +  C \int h^{p+1} \om_\phi^{k} \wed \om_v^{s-1} \wed \om^{n-k-s+1} \\
&\quad +  C \int h^{p+1} \om_\phi^{k-1} \wed \om_v^{s-1} \wed \om^{n-k-s+2}.
\end{aligned}$$
The item (b) is proven.
\end{proof}

We are ready to begin with the simplest subcase of {\bf Case 1} when $s=0$. This is also  a starting point for the  induction argument. We are going to show that
\[\label{eq:start-ind}
	e_{(q,m,0)} \leq C e_{(q-1,m-1,1)} + Ce_{(q-1,m-1,0)}+ C\sum_{\vka=0}^2  e_{(q,m-2-\vka,0)}.
\]
Equivalently, 
\begin{lem} \label{lem:start-ind} Let $q\geq 2$ be integer. Then,
$$\begin{aligned}
	\int (u-v)^{q+1} \om_\phi^m \wed \om^{n-m} 
&	\leq C \int (u-v)^{q} \om_\phi^{m-1} \wed \om_v \wed \om^{n-m} \\
&\quad + C \int (u-v)^q \om_\phi^{m-1} \wed \om^{n-m+1} \\
&\quad + C \int (u-v)^{q+1} \om_\phi^{m-2} \wed \om^{n-m+2}\\
&\quad + C \int (u-v)^{q+1} \om_\phi^{m-3} \wed \om^{n-m+3}.
\end{aligned}$$
Here by convention $\om_\phi^k \wed \om^{n-k} \equiv \om^n$ for an integer $k\leq 0$.
\end{lem}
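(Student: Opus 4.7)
Plan:

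By the convergence theorems in Section~\ref{sec:hess-measure} I may assume all functions are smooth, and since $X$ is boundaryless there are no boundary terms in any integration by parts. The strategy is to rewrite one factor of $\om_\phi$ as $\om_v$ plus a $dd^c$ of a bounded potential, to integrate the latter by parts onto $h^{q+1}$, and then to tame the resulting gradient and torsion terms using Lemma~\ref{lem:IBP-CS} and a Cauchy--Schwarz split.

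Using the identity $\om_\phi = \om_v + dd^c(\phi-v)$ I decompose
$$I := \int h^{q+1}\om_\phi^m\wed\om^{n-m} = \int h^{q+1}\om_\phi^{m-1}\wed\om_v\wed\om^{n-m} + \int h^{q+1}\om_\phi^{m-1}\wed dd^c(\phi-v)\wed\om^{n-m} =: J_1 + J_2.$$
Since $0\le h\le 1$, the first integral satisfies $J_1 = e_{(q,m-1,1)}\le e_{(q-1,m-1,1)}$, which yields the first target term. Setting $\psi:=\phi-v\in[-1,1]$ and $T := \om_\phi^{m-1}\wed\om^{n-m}$, the compact-manifold identity $\int dd^c(h^{q+1}\psi)\wed T = \int h^{q+1}\psi\,dd^cT$ (two IBPs) gives
$$J_2 = \int h^{q+1}\psi\,dd^cT - \int \psi\,dd^c h^{q+1}\wed T - \int\bigl(dh^{q+1}\wed d^c\psi + d\psi\wed d^c h^{q+1}\bigr)\wed T.$$

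The first piece is controlled pointwise by the basic inequality \eqref{eq:wed-prod-bound-b1} applied to $T$, yielding a bound $\le\Bb\sum_{\vka=0}^{2}e_{(q,m-1-\vka,0)}$; the $\vka=0$ term satisfies $e_{(q,m-1,0)}\le e_{(q-1,m-1,0)}$, and the $\vka=1,2$ terms are exactly the final two targets $e_{(q,m-2,0)}$ and $e_{(q,m-3,0)}$. For the $\int\psi\,dd^c h^{q+1}\wed T$ piece, I expand $dd^c h^{q+1} = (q+1)h^q(\om_u-\om_v) + q(q+1)h^{q-1}dh\wed d^c h$ and use $\om_u=\om_v + dd^c h$ together with a second IBP to reduce everything to $\int h^{q-1}dh\wed d^c h\wed T$, which is bounded by Lemma~\ref{lem:IBP-CS}(a) with $p=q$, $k=m-1$, $s=0$ (using the $s=0$ simplification), plus torsion corrections controlled again by \eqref{eq:wed-prod-bound-b1}. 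For the mixed-gradient piece $\int h^q\,dh\wed d^c\psi\wed T$ I apply the classical Cauchy--Schwarz inequality for positive forms,
$$\Bigl|\int h^q\,dh\wed d^c\psi\wed T\Bigr|\leq\Bigl(\int h^q\,dh\wed d^c h\wed T\Bigr)^{1/2}\Bigl(\int h^q\,d\psi\wed d^c\psi\wed T\Bigr)^{1/2},$$
whose first factor is again handled by Lemma~\ref{lem:IBP-CS}(a), while the second factor is bounded using $2\,d\psi\wed d^c\psi = dd^c\psi^2 - 2\psi\,dd^c\psi$ (with $|\psi|\le 1$) and one more IBP, yielding only $e_{(q-1,\cdot,\cdot)}$-type integrals reducible to the target slots via the equivalence \eqref{rmk:equiv-forms-e}.

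The main obstacle is the careful bookkeeping of the many extra terms generated by $d\om\neq 0$ on the Hermitian manifold $X$. Every IBP produces torsion corrections controlled by the basic inequalities \eqref{eq:wed-prod-bound-b1} and \eqref{eq:wed-prod-bound-b2}, and one must verify that these only lower the $\om_\phi$-weight by at most two so that they fit inside the $\om_\phi^{m-2}$ and $\om_\phi^{m-3}$ slots on the right-hand side. The solitary Cauchy--Schwarz step is precisely what breaks the $h\leftrightarrow\psi$ symmetry and produces the loss of one power of $h$ visible in the first two target terms.
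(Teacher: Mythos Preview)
Your proposal has a genuine gap in the treatment of the mixed-gradient term. After your Cauchy--Schwarz split you must bound $\int h^q\,d\psi\wed d^c\psi\wed T$ with $T=\om_\phi^{m-1}\wed\om^{n-m}$. The identity $2\,d\psi\wed d^c\psi = dd^c\psi^2 - 2\psi\,dd^c\psi$ together with $dd^c\psi=\om_\phi-\om_v$ produces $\int h^q\psi\,\om_\phi\wed T$; since $|\psi|\le1$ and $\om_\phi\wed T\ge0$, the best pointwise bound is $e_{(q-1,m,0)}=\int h^q\om_\phi^m\wed\om^{n-m}$. This carries $m$ factors of $\om_\phi$, like the left-hand side $I=e_{(q,m,0)}$ but with one fewer power of $h$; because $0\le h\le1$ one has $e_{(q-1,m,0)}\ge I$, so it cannot be absorbed into the left and it does not match any target slot (every target has at most $m-1$ factors of $\om_\phi$). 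The equivalence \eqref{rmk:equiv-forms-e} is of no help since it preserves the total degree $k+s$. Integrating $\int h^q\,dd^c\psi^2\wed T$ by parts instead runs into the same obstruction.

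The paper circumvents this by a different decomposition: it writes $\om_\phi=\om+dd^c\phi$ (not $\om_v+dd^c\psi$), integrates by parts to get $\int\phi\,dd^c[h^{q+1}T]$, and then uses the sign $\phi\le0$ decisively. In $\int\phi\,dd^ch^{q+1}\wed T$ the contributions $h^q\om_u\wed T$ and $h^{q-1}dh\wed d^ch\wed T$ are nonnegative, so after multiplication by $\phi\le0$ they are simply discarded, leaving only $(q+1)e_{(q-1,m-1,1)}$. The cross terms in the paper's expansion are $\int\phi\,h^q\,dh\wed d^c\om\wed(\cdots)$, involving $d^c\om$ rather than $d^c\psi$; thus the Cauchy--Schwarz inequality of Lemma~\ref{lem:CS-classic} pairs $dh\wed d^ch$ against $\om^2$, and no $\om_\phi^m$ term is ever created. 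Your substitution $\psi=\phi-v$ loses exactly the sign information $-1\le\phi\le0$ that makes this cancellation possible.
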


\begin{proof} 
Recall that $h := u-v \geq 0$. A direct computation gives
\[\label{eq:basic-computation}\begin{aligned}
	dd^c [h^{q+1} \om_\phi^{m-1} \wed \om^{n-m}] 
&= dd^c h^{q+1} \wed \om_\phi^{m-1}\wed \om^{n-m} \\
&\quad + d h^{q+1} \wed d^c (\om_\phi^{m-1}\wed \om^{n-m}) \\
&\quad - d^c h^{q+1} \wed d (\om_\phi^{m-1}\wed \om^{n-m}) \\
&\quad + h^{q+1} dd^c (\om_\phi^{m-1}\wed \om^{n-m})\\
& =: T_1 + T_2 + T_3 +T_4.
\end{aligned}\]
By integration by parts,
\[\label{eq:IBP-basic}\begin{aligned}
	\int h^{q+1} dd^c \phi \wed \om_\phi^{m-1} \wed \om^{n-m} 
&= \int \phi dd^c [h^{q+1} \om_\phi^{m-1}\wed \om^{n-m}]   \\
&= \int \phi (T_1 + T_2 + T_3 + T_4).
\end{aligned}\]

{\bf Case 1a: Estimate of $T_1$.} Compute
$$\begin{aligned}
 dd^c h^{q+1}  
=  q(q+1)  h^{q-1} dh \wed d^c h 
+ (q+1)  h^{q} [\om_u - \om_v]. 
\end{aligned}	
$$
Since $-1\leq \phi \leq 0$, $\om_u \wed \om_\phi^{m-1}\wed\om^{n-m}$ and $dh\wed d^ch\wed \om_\phi^{m-1} \wed\om^{n-m} \geq 0$, 
we derive
\[\label{eq:T1}
	\phi T_1 \leq  (q+1) h^q  \om_\phi^{m-1} \wed \om_v \wed\om^{n-m}.
\]
Then,
\[\label{eq:T1-int}
	\int \phi T_1 \leq (q+1) e_{(q-1, m-1,1)}.
\]

{\bf Case 1b: Estimate of $T_4$.} Using again the basic inequality \eqref{eq:wed-prod-bound-b1} we get
\[\label{eq:T4}
	dd^c (\om_\phi^{m-1} \wed \om^{n-m}) \leq C \sum_{\vka=0}^2 \om_\phi^{m-1-\vka} \wed \om^{n-m+1+\vka}. 
\]
This implies that
\[\label{eq:T4-int}
	\int \phi T_4 \leq C [e_{(q,m-1,0)} + e_{(q,m-2,0)}+ e_{(q,m-3,0)}].
\]

{\bf Case 1c: Estimate of $T_2$ and $T_3$.} Since these two terms are bounded in the same way, we give details only for $T_2$. Compute
$$\begin{aligned}
	d h^{q+1} \wed d^c (\om_\phi^{m-1}\wed \om^{n-m}) 
&= 	(q+1) (m-1) h^q dh\wed d^c\om \wed \om_\phi^{m-2} \wed \om^{n-m} \\
&\quad + (q+1)(n-m) h^q dh\wed d^c\om \wed\om_\phi^{m-1} \wed \om^{n-m-1}.
\end{aligned}$$
Next we apply the Cauchy-Schwarz inequality in Lemma~\ref{lem:CS-classic} for the first term on the right hand side to obtain
\[ \label{eq:T23}\begin{aligned}
&	\left| \int \phi h^q dh\wed d^c \om \wed \om_\phi^{m-2} \wed \om^{n-m} \right|^2 \\
&\leq		C \int |\phi| h^{q-1} dh\wed d^c h \wed \om_\phi^{m-2} \wed \om^{n-m+1} \int |\phi| h^{q+1} \om_\phi^{m-2} \wed \om^{n-m+2} \\
 &\leq		C\left( \int  h^{q-1} dh\wed d^c h \wed \om_\phi^{m-2} \wed \om^{n-m+1} + \int  h^{q+1} \om_\phi^{m-2} \wed \om^{n-m+2} \right)^2 \\
 &\leq C\left[ e_{(q-1,m-2,1)} + e_{(q,m-2,0)} + e_{(q,m-3,0)} + e_{(q,m-4,0)} \right]^2,
\end{aligned}\]
where we used the fact $|\phi| \leq 1$ in the second inequality, and in the last inequality we invoked  Lemma~\ref{lem:IBP-CS}-(a) in the special case $(p, k,s) =(q,m-s,0)$. This yields that in the last three terms on the right hand side of that lemma only $\om_\phi$ appears.  Thus, 
\[\label{eq:T23-int}
	\left| \int \phi T_2 \right| \leq C  e_{(q-1, m-2, 1)} + C \sum_{\vka=0}^2 e_{(q, m-2-\vka,0)}.
\]
This completed the estimate of $T_2$ and $T_3$.

From the estimates in \eqref{eq:T1}, \eqref{eq:T4-int} and \eqref{eq:T23-int} for  $T_1, T_4$, $T_{2}$ and $T_3$ the proof follows.
\end{proof}

The general inequality in the Case 1 is stated as follows.
\begin{lem}\label{lem:case1} For $1\leq k\leq m$ and $k+s= m$ and $q\geq 2$,
$$\begin{aligned}
	e_{(q,k,s)} 
&\leq c_k \sum_{i=0}^{k-1} e_{(q-1,i, m-i)} + C \sum_{i=0}^{m-1} e_{(q-1-m+k,i,m-1-i)} \\
&\qquad+ C\sum_{\vka=0}^2\sum_{i=0}^{m-2-\vka} e_{(q,i, m-2-i-\vka)}.
\end{aligned}$$
\end{lem}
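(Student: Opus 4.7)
The plan is to mimic the proof of Lemma~\ref{lem:start-ind} (the base case $k=m$, $s=0$), now carrying along the $\om_v^s$ factor. By Proposition~\ref{prop:conv-decreasing} we may assume $\phi, u, v$ are smooth. Peeling off a single copy of $\om_\phi = \om + dd^c\phi$ gives
\[
    e_{(q,k,s)}
    = \int h^{q+1} \om_\phi^{k-1} \wed \om_v^s \wed \om^{n-m+1}
    + \int h^{q+1} dd^c\phi \wed \om_\phi^{k-1} \wed \om_v^s \wed \om^{n-m}.
\]
The first integral equals $e_{(q, k-1, s)}$ and, since $0\leq h\leq 1$ and $k\leq m$, is dominated by $e_{(q-1-m+k, k-1, s)}$, the $i=k-1$ entry of the second sum. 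For the second integral, integration by parts on the compact $X$ (no boundary contribution) yields $\int \phi (T_1 + T_2 + T_3 + T_4)$ via the Leibniz expansion \eqref{eq:basic-computation}.

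For $T_1 = dd^c(h^{q+1}) \wed \om_\phi^{k-1}\wed\om_v^s\wed\om^{n-m}$, using the identity
\[
    dd^c h^{q+1} = q(q+1) h^{q-1}\, dh \wed d^c h + (q+1) h^q (\om_u - \om_v),
\]
together with $\phi\leq 0$ and the positivity of $dh\wed d^ch\wed\om_\phi^{k-1}\wed\om_v^s\wed\om^{n-m}$ and of $\om_u\wed\om_\phi^{k-1}\wed\om_v^s\wed\om^{n-m}$, one obtains $\int \phi T_1 \leq (q+1) e_{(q-1, k-1, s+1)}$, the $i=k-1$ entry of the first sum. For $T_4 = h^{q+1} dd^c[\om_\phi^{k-1}\wed\om_v^s\wed\om^{n-m}]$, I will derive the mixed variant
\[
    dd^c\bigl[\om_\phi^{k-1}\wed\om_v^s\wed\om^{n-m}\bigr] \leq C \sum_{\vka=0}^{2} (\om_\phi + \om_v)^{m-1-\vka}\wed \om^{n-m+1+\vka}
\]
of \eqref{eq:wed-prod-bound-b1}--\eqref{eq:wed-prod-bound-b2} by distributing $dd^c$ across the product and bounding each torsion factor $dd^c\om$ or $d\om\wed d^c\om$ that arises via \eqref{eq:curv-bound}. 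Binomial expansion of $(\om_\phi + \om_v)^{m-1-\vka}$ then produces $e_{(q, \cdot, \cdot)}$-terms of total index $m-1-\vka$; for $\vka = 0$ these fit into the second sum (after comparing $h^{q+1} \leq h^{q-m+k}$), while for $\vka = 1, 2$ they populate the $\vka = 0, 1$ entries of the third sum.

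The terms $T_2, T_3$ arise from $dh^{q+1}\wed d^c[\om_\phi^{k-1}\wed\om_v^s\wed\om^{n-m}]$ and its conjugate. Since $d\om_\phi = d\om_v = d\om$, the $d^c$-derivative produces three families of integrals of the shape $(q+1) h^q\, dh\wed d^c\om \wed (\text{rest})$, according to whether $d^c$ hits an $\om_\phi$-, $\om_v$-, or $\om$-factor. To each I apply Lemma~\ref{lem:CS-classic} with $T$ chosen as the remaining positive $(n-2, n-2)$-form. This separates the square into a \emph{value integral} $\int h^{q+1} \om^2 \wed T$, directly an $e_{(q, \cdot, \cdot)}$ of total index $m-2$ lying in the third sum, and a \emph{gradient integral} $\int h^{q-1} dh\wed d^c h \wed \om \wed T$, which Lemma~\ref{lem:IBP-CS}(a) bounds by $C e_{(q-1, \cdot, \cdot)} + C \sum_{\vka'=0}^{2} \sum_i e_{(q, i, \cdot)}$; only when $d^c$ hits an $\om$-factor does this yield a total-index-$m$ contribution (the $i = k-1$ entry of the first sum), otherwise the leading term has total index $m-1$ (second sum), with the remaining entries filling out the third sum. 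The AM--GM inequality $\sqrt{AB}\leq \tfrac{1}{2}(A+B)$ linearizes the Cauchy--Schwarz estimate, and summing the contributions from $T_1, T_2, T_3, T_4$ with the initial $\om$-split yields the claimed inequality. The main obstacle is bookkeeping: the three differentiation patterns contribute to all three sums simultaneously, and every $h$-exponent produced must be aligned with $q-1$, $q-m+k-1$, or $q$ via the monotonicity $e_{(r', \cdot, \cdot)} \geq e_{(r, \cdot, \cdot)}$ for $r \geq r'$ coming from $0\leq h\leq 1$.
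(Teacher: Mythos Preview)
Your treatment of $T_1$ and $T_4$ is fine, and the first two families in $T_2,T_3$ (when $d^c$ lands on an $\om_\phi$- or $\om_v$-factor) are handled correctly by Lemma~\ref{lem:CS-classic}. The gap is in the third family, call it $T_{2c}$, where $d^c$ hits a bare $\om$: the remaining $(n-2,n-2)$-form is $\om_\phi^{k-1}\wed\om_v^{s}\wed\om^{n-m-1}$, and because the $\om$-exponent has dropped to $n-m-1$ this form is \emph{not} positive in general (this is exactly the obstruction flagged before Lemma~\ref{lem:CS}; for instance with $n=3$, $m=2$, it reduces to a single $\ga\in\Ga_2(\om)$, which need not be a positive $(1,1)$-form). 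Hence Lemma~\ref{lem:CS-classic} is not available for $T_{2c}$, and your bookkeeping ``only the $i=k-1$ entry of the first sum'' rests on an inapplicable lemma.

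The correct tool is Lemma~\ref{lem:CS}, but that lemma only accepts a single power $\ga^{m-1}$, so one must first dominate $\om_\phi^{k-1}\wed\om_v^{s}$ by $(\om_\phi+\om_v)^{m-1}$. After Cauchy--Schwarz and Lemma~\ref{lem:IBP-CS}(a), the gradient factor then produces $e_{(q+a,\,i,\,m-i)}$ for \emph{every} $0\le i\le m-1$ (here $a=m-k$), not only $i=k-1$. In particular you pick up $e_{(q+a,k,m-k)}$---the very quantity being estimated, at a larger $h$-exponent---and $e_{(q+a,\ell,m-\ell)}$ for $\ell>k$. The paper closes this by running a downward induction on $k$ from $k=m$: the terms with $\ell>k$ are controlled by the induction hypothesis \eqref{eq:ind-hyp-cq}, and the $\ell=k$ term is absorbed into the left-hand side via a small parameter $\veps$. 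Your direct, non-inductive plan has no mechanism for either step, so the argument does not close as written.
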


\begin{proof} We prove by  induction in decreasing  $k$ starting with $k=m$. For $k=m$, it is the content of Lemma~\ref{lem:start-ind} (thus if $m=1$ we are done). Assume that it is true for every $k+1 \leq \ell \leq m$, i.e., we have
\[\label{eq:ind-hyp} \begin{aligned}
	e_{(q, \ell, m-\ell)} 
&\leq c_\ell' \sum_{i=0}^{\ell-1} e_{(q-1,i, m-i)} + c_\ell' \sum_{i=0}^{m-1} e_{(q-1-m+\ell, i, m-1-i)} \\ 
&\qquad+ C\sum_{\vka=0}^2\sum_{i=0}^{m-2-\vka} e_{(q,i, m-2-i-\vka)}.
\end{aligned}
\] This implies that for $k+1\leq \ell \leq m$,
\[\label{eq:ind-hyp-cq}\begin{aligned}	
e_{(q, \ell, m-\ell)}  
&\leq c_\ell \sum_{i=0}^{k} e_{(q+k-\ell,i,m-i)} + c_\ell \sum_{x=0}^{\ell -k} \sum_{i=0}^{m-1} e_{(q-1-m+\ell -x,i,m-1-i)} \\
&\qquad +  C\sum_{\vka=0}^2\sum_{i=0}^{m-2-\vka} e_{(q,i, m-2-i-\vka)} \\
&\leq c_\ell \sum_{i=0}^{k} e_{(q-a,i,m-i)} + c_\ell \sum_{i=0}^{m-1} e_{(q-a,i,m-1-i)} \\
&\qquad +  C\sum_{\vka=0}^2\sum_{i=0}^{m-2-\vka} e_{(q,i, m-2-i-\vka)}, \\
\end{aligned}\]
where we set
\[ \label{eq:new-a}
a:= m-k.
\]

We need to prove the inequality for $\ell =k\geq 1$.
Denote 
$$
	\Ga = \om_\phi^{m-1} \wed \om^{n-m} \quad\text{and}\quad
	\Ga^{(s)} = \om_\phi^{m-1-s} \wed \om_v^s \wed \om^{n-m}.
$$
The strategy of the proof is the same as the one in Lemma~\ref{lem:start-ind} where it is done for $s=0$, i.e., $\Ga^{(0)} = \Ga$. The integrand of $e_{(q,k,s)}$ can be written as
$$
	h^{q+1}\om_\phi^{m-s} \wed \om_v^s \wed \om^{n-m} = h^{q+1}dd^c \phi \wed  \Ga^{(s)}  + h^{q+1} \om \wed \Ga^{(s)}.
$$
By integration by parts we have
$$
	\int h^{q+1} dd^c \phi \wed \Ga^{(s)} = \int \phi dd^c [h^{q+1} \Ga^{(s)}].
$$
Again a direct computation gives 
\[\label{eq:basic-cpt-ind}
\begin{aligned}
	dd^c [h^{q+1} \om_\phi^{m-1-s} \wed \om_v^s \wed \om^{n-m}] 
&= dd^c h^{q+1} \wed \om_\phi^{m-1-s}\wed \om_v^s\wed \om^{n-m} \\
&\quad + d h^{q+1} \wed d^c (\om_\phi^{m-1-s}\wed\om_v^s\wed \om^{n-m}) \\
&\quad - d^c h^{q+1} \wed d (\om_\phi^{m-1-s}\wed \om_v^s\wed \om^{n-m}) \\
&\quad + h^{q+1} dd^c (\om_\phi^{m-1-s}\wed\om_v^s\wed \om^{n-m})\\
& =: T_1 + T_2 + T_3 +T_4.
\end{aligned}\]

A similar consideration as in \eqref{eq:T1} gives
\[\label{eq:T1-x}
	\phi T_1 \leq (q+1) h^q \om_\phi^{m-1-s} \wed \om_v^{s+1} \wed \om^{n-m}
\]
and therefore, 
\[\label{eq:T1-int-x}
	\int \phi T_1 \leq (q+1) e_{(q-1,m-1-s, s+1)}.
\]
However, in the basic inequality \eqref{eq:wed-prod-bound-b2} the estimate for $T_4$ will have more terms when $s\geq 1$,
\[\label{eq:T4-x}
\begin{aligned}
	dd^c (\om_\phi^{m-1-s} \wed \om_v^s \wed \om^{n-m}) 
&\leq C (\om_\phi +\om_v)^{m-1} \wed\om^{n-m+1}  \\
&\quad + C (\om_\phi +\om_v)^{m-2} \wed \om^{n-m+2} \\
&\quad + C (\om_\phi + \om_v)^{m-3} \wed \om^{n-m+3}.
\end{aligned}\]
It follows from Remark~\ref{rmk:equiv-forms-e} that
\[\label{eq:T4-int-x}
	\int \phi T_4 \leq C \sum_{\vka=0}^2\sum_{i=0}^{m-1-\vka} e_{(q, i, m-1-i-\vka)}. 
\]

Lastly, we deal with the new terms $T_2$ and $T_3$ compared with the ones in  Lemma~\ref{lem:start-ind}. Compute
$$\begin{aligned}
&d h^{q+1} \wed d^c (\om_\phi^{m-1-s} \wed \om_v^s \wed \om^{n-m}) \\
&=	(q+1)(m-1-s) h^q dh \wed d^c \om \wed \om_\phi^{m-s-2} \wed \om_v^s \wed \om^{n-m} \\
&\quad + (q+1)s h^q dh \wed d^c \om \wed \om_\phi^{m-s-1} \wed \om_v^{s-1} \wed \om^{n-m} \\
&\quad + (q+1)(n-m) h^q dh \wed d^c \om \wed \om_\phi^{m-s-1} \wed \om_v^s \wed \om^{n-m-1} \\
&=: T_{2a} + T_{2b} + T_{2c}.
\end{aligned}$$
We will see that the estimates for $T_{2a}$ and $T_{2b}$ are exactly the same and they are easier than the ones for $T_{2c}$. Because the exponent of $\om$ in these two is $n-m$ it allows us to use an easier Cauchy-Schwarz inequality (Lemma~\ref{lem:CS-classic}). Note that the sum of degrees of $\om_\phi$ and $\om_v$ is $m-2$. Hence, after using Lemma~\ref{lem:IBP-CS}-(a) this sum will be at most $m-1$. 

Now we give a detailed steps for estimation of $T_{2a}$ and $T_{2b}$. Using the Cauchy-Schwarz' inequality (Lemma~\ref{lem:CS-classic}) we get
\[\label{eq:T2a}\begin{aligned}
 &\left| \int \phi h^qdh \wed d^c \om \wed \om_\phi^{m-2-s} \wed \om_v^s \wed \om^{n-m} \right|^2 \\
 &\leq 	C \int  h^{q-1} dh \wed d^c h \wed \om_\phi^{m-2-s}\wed \om_v^s \wed \om^{n-m+1} \\
 &\qquad \times \int  h^{q+1} \om_\phi^{m-2-s} \wed \om_v^s \wed \om^{n-m+2} \\
 &\leq C \left( \int  h^{q-1} dh \wed d^c h \wed \om_\phi^{m-2-s}\wed \om_v^s \wed \om^{n-m+1}  + e_{(q,m-2-s, s)} \right)^2.
\end{aligned}\]
We now apply Lemma~\ref{lem:IBP-CS}-(a) with $(p,k,s)=(q, m-2-s,s)$ for the first integral in the bracket. Then,
\[\label{eq:T2a-1}\begin{aligned}
&\int h^{q-1} dh\wed d^c h \wed \om_\phi^{m-2-s}\wed \om_v^s \wed \om^{n-m+1}  \\
&\leq C e_{(q-1, m-2-s, s+1)}  + C \sum_{\vka=0}^2 \sum_{i=0}^{m-2-\vka} e_{(q, i, m-2-i-\vka)}.
\end{aligned}\]

Let us proceed with the harder estimate for $T_{2c}$. Recall from \eqref{eq:new-a} that 
$a:=m-k.$
The Cauchy-Schwarz inequality in Lemma~\ref{lem:CS} gives
$$
\begin{aligned}
&	I^2:= \left| \int \phi h^q \wed d^c\om \wed \om_\phi^{m-s-1} \wed \om_v^s \wed \om^{n-m-1}\right|^2 \\
&\leq C \int h^{q +1-a} (\om_\phi +\om_v)^{m-1} \wed \om^{n-m+1} \\
&\qquad \times \int h^{q-1+a} dh\wed d^c h\wed (\om_\phi +\om_v)^{m-1} \wed \om^{n-m}.
\end{aligned}
$$
By the standard Cauchy-Schwarz inequality  for a given $\veps>0$ (to be determined later) 
\[\label{eq:T2c}\begin{aligned}
&	I \leq \frac{C}{\veps} \int h^{q-a} (\om_\phi +\om_v)^{m-1} \wed \om^{n-m+1} \\
&\qquad + \veps \int h^{q + a} dh \wed d^ch \wed (\om_\phi +\om_v)^{m-1} \wed \om^{n-m+1}\\
&=: J_1 + J_2.
\end{aligned}\]
By using Remark~\ref{rmk:equiv-forms-e}, the first integral on the right hand side is bounded by
\[\label{eq:J1-bound}
	J_1\leq \frac{C}{\veps} \sum_{i=0}^{m-1} e_{(q-a-1,i, m-1-i)}.
\]
We continue to deal with the second integral $J_2$. We will use 
$$
	(\om_\phi +\om_v)^{m-1} \leq C_{m,n} \sum_{i=0}^{m-1} \om_\phi^i \wed \om_v^{m-1-i} \wed \om^{n-m}
$$
and then Lemma~\ref{lem:IBP-CS} for $(p,k,s) =(q+a, i, m-1-i)$. This gives a bound for the second integral by
$$\begin{aligned}
J_2 &\leq	C \veps \sum_{i=0}^{m-1} e_{(q+a, i, m-i)} + C\veps  \sum_{\vka=0}^2\sum_{i=0}^{m-1-\vka} e_{(q+a+1, i, m-1-i-\vka)}.  
\end{aligned}$$
Let us consider the first sum above which contains $e_{(q+a, k, m-k)}$. Write
$$\begin{aligned}
	\veps \sum_{i=0}^{m-1} e_{(q+a,i,m-i)}& =  \veps \sum_{i\geq k+1} e_{(q+a, i,m-i)} + \veps \sum_{i=0}^{k-1} e_{(q+a,i,m-i)} \\
&\quad	+ \veps e_{(q+a,k,m-k)}. 
\end{aligned}$$
Applying the induction hypothesis \eqref{eq:ind-hyp-cq}  to each term in the first sum on the right hand side we get 
$$\begin{aligned}
	\veps \sum_{i\geq k+1}^{m-1} e_{(q+a,i,m-i)} 
&\leq \veps b_k \left(  e_{(q,k,m-k)} + \sum_{i=0}^{k-1} e_{(q, i, m-i)}\right) \\
&+ \veps b_k \sum_{i=0}^{m-1} e_{(q,i,m-1-i)} +  C\sum_{\vka=0}^2\sum_{i=0}^{m-2-\vka} e_{(q+a,i, m-2-i-\vka)}, \\
\end{aligned}$$ 
where $b_k = \sum_{i=k+1}^{m-1} c_i$. Using the decreasing property of $e_{(p,k,s)}$ in $p$ for $e_{(q, \bullet, \bullet)}$ and $e_{(q+a, \bullet, \bullet)}$, we get
\[\label{eq:J2-bound}\begin{aligned}
	J_2 
&\leq \veps(1+ b_k) e_{(q, k, m-k)} + \veps (1+ b_k) \sum_{i=0}^{k-1} e_{(q-1, i, m-i)} \\
&\qquad + (\veps b_k+C\veps) \sum_{i=0}^{m-1} e_{(q-1-a,i,m-1-i)} +  C\sum_{\vka=0}^2\sum_{i=0}^{m-2-\vka} e_{(q,i, m-2-i-\vka)}.	
\end{aligned}\]
Notice that $q-1-a =q-1- m+k \geq 1$.  

Combining \eqref{eq:T2c} and the two bounds \eqref{eq:J1-bound} and \eqref{eq:J2-bound} of $J_1, J_2$ we have
\[\label{eq:T2c-bound}\begin{aligned}
	I 
&\leq \veps(1+ b_k) e_{(q, k, m-k)} + \veps (1+b_k) \sum_{i=0}^{k-1} e_{(q-1, i, m-i)} \\
&\qquad+ [\veps b_k + C/\veps + C\veps] \sum_{i=0}^{m-1} e_{(q-1-a, i, m-1-i)}  \\
&\qquad+ C\sum_{\vka=0}^2\sum_{i=0}^{m-2-\vka} e_{(q,i, m-2-i-\vka)}.	
\end{aligned}
\]

Combining the estimates \eqref{eq:T1-int-x}, \eqref{eq:T4-int-x},   \eqref{eq:T2a}, \eqref{eq:T2a-1} and  \eqref{eq:T2c-bound}  we derive
$$\begin{aligned}
	e_{(q,k,s)}
&\leq  C e_{(q-1,k-1,s+1)} + C \sum_{i=0}^{m-1} e_{(q, i, m-1-i)} \\
&\quad + \veps(1+b_k) \sum_{i=0}^{k-1} e_{(q-1, i, m-i)} \\
&\quad + \veps (1+b_k) e_{(q,k,s)} + \veps(1+b_k) \sum_{i=0}^{m-1} e_{(q-1-a,i,m-1-i)} \\
&\quad + C e_{(q-1,k-2,s+1)} 	+ C \sum_{\vka=0}^2 \sum_{i=0}^{m-2-\vka} e_{(q,i,m-2-i-\vka)}.
\end{aligned}$$
Now we can choose $\veps$ so that $\veps(1+b_k) =1/2$ and regroup the terms on the right hand side (decreasing the first parameter in $e_{(\bullet, \bullet, \bullet)}$ if necessary). This implies for a possibly larger $C>0$ that
$$\begin{aligned}
	e_{(q, k,s)} 
&\leq (C + 1/2) \sum_{i=0}^{k-1} e_{(q-1, i, m-i)} + C \sum_{i=0}^{m-1}e_{(q-1-m+k, i, m-1-i)} \\
&\quad +  C \sum_{\vka=0}^2 \sum_{i=0}^{m-2-\vka} e_{(q,i,m-2-i-\vka)}.
\end{aligned}
$$ 
This proves the inequality \eqref{eq:ind-hyp} for $\ell =k$. Therefore, the proof of the corollary follows.
\end{proof}

Next, we consider {\bf Case 2}.

\begin{lem}\label{lem:case2} For $1\leq k \leq m-1$ and $k+s= m-1\geq 0$ and $q\geq 1$ we have
$$
	e_{(q,k,s)} \leq C e_{(q-1,m-2-s, s+1)}   + C \sum_{\vka=0}^2 \sum_{i=0}^{m-2-\vka} e_{(q,i, m-2-i-\vka)}.
$$
\end{lem}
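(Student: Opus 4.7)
The plan is to mirror the strategy of Lemma~\ref{lem:start-ind}, exploiting the fact that the exponent of $\om$ in the integrand of $e_{(q,k,s)}$ is $n-m+1$, which is one larger than in the Case~1 setting. Writing $\om_\phi = dd^c\phi + \om$ to peel off one copy of $\om_\phi$, I would split
$$e_{(q,k,s)} = \int h^{q+1}\, dd^c\phi \wed \Ga \;+\; e_{(q,k-1,s)},$$
where $\Ga = \om_\phi^{k-1}\wed \om_v^s \wed \om^{n-m+1}$. Since $(k-1)+s = m-2$, the second term is immediately absorbed into the $\vka=0$ slice of the sum on the right-hand side.

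For the first integral, I would integrate by parts to obtain $\int \phi\, dd^c(h^{q+1}\Ga) = \int \phi(T_1 + T_2 + T_3 + T_4)$, where the $T_i$ are the four pieces from expanding $dd^c(h^{q+1}\Ga)$ by the product rule as in \eqref{eq:basic-computation}. The estimate of $T_1$ follows the same routine as in Lemma~\ref{lem:start-ind}: using $dd^c h^{q+1} = q(q+1)h^{q-1}dh\wed d^ch + (q+1)h^q(\om_u-\om_v)$ and discarding the positive terms (which are killed by $\phi\leq 0$) yields $\int\phi T_1 \leq (q+1)\,e_{(q-1,k-1,s+1)} = (q+1)\,e_{(q-1,m-2-s,s+1)}$, which is exactly the first term on the right-hand side. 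For $T_4$, the basic torsion bound \eqref{eq:wed-prod-bound-b2} applied with $(k-1)+s = m-2$ gives $dd^c\Ga \leq C(\om_\phi+\om_v)^{m-2}\wed\om^{n-m+2}$, and Remark~\ref{rmk:equiv-forms-e} together with $|\phi|\leq 1$ places $|\int\phi T_4|$ into the $\vka=0$ portion of the target sum.

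The bulk of the work is the mixed-derivative terms $T_2$ and $T_3$. Expanding $d^c\Ga$ by Leibniz produces three kinds of summands, each carrying a $d^c\om$ factor (since $d\om_\phi = d\om_v = d\om$), of the form $h^q dh\wed d^c\om\wed \om_\phi^a\wed \om_v^b\wed \om^c$ with $a+b \leq m-2$. To each I would apply the Cauchy--Schwarz inequality (Lemma~\ref{lem:CS-classic}) with $T$ chosen as the remaining positive wedge $\om_\phi^a\wed \om_v^b\wed \om^{n-a-b-2}$. The polar factor becomes an $e_{(q,a,b)}$ of total degree at most $m-2$, absorbed in the target sum. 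The gradient factor $\int h^{q-1}dh\wed d^c h\wed\om_\phi^a\wed \om_v^b\wed \om^{n-a-b-1}$ is then controlled by Lemma~\ref{lem:IBP-CS}(a) (since $a+b \leq m-2 \leq m-1$), yielding an $e_{(q-1,a,b+1)}$ summand plus further $e_{(q,\cdot,\cdot)}$ pieces of total degree $\leq m-2$.

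The main obstacle will be bookkeeping the indices so that every produced summand genuinely fits into either the single term $C e_{(q-1,m-2-s,s+1)}$ or the double sum $C\sum_{\vka}\sum_i e_{(q,i,m-2-i-\vka)}$. The distinguished contribution comes from the Leibniz derivative falling on the $\om^{n-m+1}$ factor of $\Ga$, which gives $\om_\phi^{k-1}\wed \om_v^s\wed d^c\om\wed \om^{n-m}$; this is the unique piece whose subsequent gradient integral has $(\om_\phi,\om_v)$-degree exactly $m-2$, producing the matching $e_{(q-1,k-1,s+1)}$. The other contributions (from derivatives landing on $\om_\phi^{k-1}$ or $\om_v^s$) carry strictly lower degree and feed only into the double sum.
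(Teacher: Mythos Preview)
Your proposal is correct and follows essentially the same route as the paper: peel off one $\om_\phi$, integrate by parts, and estimate the four resulting pieces $T_1',\dots,T_4'$ via the same Cauchy--Schwarz plus Lemma~\ref{lem:IBP-CS} scheme, correctly identifying the Leibniz piece where $d^c$ falls on $\om^{n-m+1}$ as the unique source of the distinguished term $e_{(q-1,k-1,s+1)}=e_{(q-1,m-2-s,s+1)}$. The only cosmetic difference is that the paper invokes part~(b) of Lemma~\ref{lem:IBP-CS} for the lower-degree Leibniz pieces $T_{2a}',T_{2b}'$ (where the total $(\om_\phi,\om_v)$-degree is $m-3$), whereas you use part~(a) uniformly; both choices produce the same kind of error terms.
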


\begin{proof} The basic computation is
\[\tag{\ref{eq:basic-cpt-ind}$'$}
\begin{aligned}
	dd^c [h^{q+1} \om_\phi^{m-2-s} \wed \om_v^s& \wed \om^{n-m+1}] \\
&= dd^c h^{q+1} \wed \om_\phi^{m-2-s}\wed \om_v^s\wed \om^{n-m+1} \\
&\quad + d h^{q+1} \wed d^c (\om_\phi^{m-2-s}\wed\om_v^s\wed \om^{n-m+1}) \\
&\quad - d^c h^{q+1} \wed d (\om_\phi^{m-2-s}\wed \om_v^s\wed \om^{n-m+1}) \\
&\quad + h^{q+1} dd^c (\om_\phi^{m-2-s}\wed\om_v^s\wed \om^{n-m+1})\\
& =: T_1' + T_2' + T_3' +T_4'.
\end{aligned}\]
Thus, the exponent of $\om$ in $T_{1}',...,T_{4}'$  increases by one.
The estimates of $T_1'$ and $T_4'$ are the same as the ones in \eqref{eq:T1-x} and \eqref{eq:T4-x}. Precisely,
\[\tag{\ref{eq:T1-x}$'$}
	\phi T_1' \leq (q+1) h^q \om_\phi^{m-2-s} \wed \om_v^{s+1} \wed \om^{n-m+1}
\]	
and therefore,
\[\tag{\ref{eq:T1-int-x}$'$}
	\int \phi T_1' \leq (q+1) e_{(q-1, m-2-s, s+1)}.
\]
The one for $T_4'$ is 
\[\tag{\ref{eq:T4-x}$'$}
\begin{aligned}
	dd^c [\om_\phi^{m-2-s} \wed \om_v^s \wed \om^{n-m+1}]
&\leq C (\om_\phi +\om_v)^{m-2} \wed\om^{n-m+1}  \\
&\quad + C (\om_\phi +\om_v)^{m-3} \wed \om^{n-m+2} \\
&\quad + C (\om_\phi + \om_v)^{m-4} \wed \om^{n-m+3}.
\end{aligned}\]
This implies
\[\tag{\ref{eq:T4-int-x}$'$}
	\int \phi T_4' \leq C \sum_{\vka=0}^2 \sum_{i=0}^{m-2-\vka}e_{(q, i, m-2-i-\vka)}.
\]

Next, the estimates for $T_2'$ and $T_3'$ are easier than for $T_2$ and $T_3$ above. Namely,
$$\begin{aligned}
&	d h^{q+1} \wed d^c (\om_\phi^{m-2-s} \wed \om_v^s \wed \om^{n-m+1}) \\
&=	(q+1)(m-2-s) h^q dh \wed d^c \om \wed \om_\phi^{m-3-s} \wed \om_v^s \wed \om^{n-m+1} \\
&\quad + (q+1)s h^q dh \wed d^c \om \wed \om_\phi^{m-2-s} \wed \om_v^{s-1} \wed \om^{n-m+1} \\
&\quad + (q+1)(n-m) h^q dh \wed d^c \om \wed \om_\phi^{m-2-s} \wed \om_v^s \wed \om^{n-m} \\
&=: T_{2a}' + T_{2b}' + T_{2c}'.
\end{aligned}
$$
We observe that the exponents of $\om$ in these three terms are at least $n-m$. It follows that the easier Cauchy-Schwarz inequality (Lemma~\ref{lem:CS-classic}) will be enough for all $T_{2a}', T_{2b}'$ and $T_{2c}'$. 

The estimation of $T_{2a}'$ and $T_{2b}'$ is as follows.
\[\tag{\ref{eq:T2a}$'$}\begin{aligned}
&\left|\int \phi h^q dh \wed d^c\om \wed \om_\phi^{m-3-s} \wed \om_v^s \wed \om^{n-m+1} \right|^2 \\
&\leq C \int h^{q-1} dh\wed d^c h \wed \om_\phi^{m-3-s} \wed \om_v^{s} \wed \om^{n-m+2} \\
&\qquad \times \int h^{q+1} \om_\phi^{m-3-s} \wed\om_v^s  \wed \om^{n-m+3} \\
&\leq C \left[ e_{(q-1,m-3-s,s+1)} + \sum_{\vka =0}^1 \sum_{\vka'=0}^1 e_{(q,m-3-s -\ka, s-\ka')}  \right]^2,
\end{aligned} 
\]
where we applied Lemma~\ref{lem:IBP-CS}-(b) for $(p,k,s) = (q+1, m-3-s,s)$ and $k+s \leq m-3$ with  the right hand side having less terms.

The estimation of $T_{2c}'$ is the one of $T_{2a}$ in Lemma~\ref{lem:case1}. In other words,
\[\tag{\ref{eq:T2a-1}$'$}
	\int \phi T_{2c}' \leq C e_{(q-1, m-2-s,s+1)} + C \sum_{\vka=0}^2 \sum_{i=0}^{m-2} e_{(q,i, m-2-i-\vka)}.
\]

We conclude the proof of lemma  from  (\ref{eq:T1-int-x}$'$), (\ref{eq:T4-int-x}$'$), (\ref{eq:T2a}$'$) and (\ref{eq:T2a-1}$'$).
\end{proof}

Lastly, we consider {\bf Case 3} which is the simplest one. 

\begin{lem} \label{lem:case3} For $1\leq k \leq m-2$ and $0\leq s \leq m-2-k$ and $q\geq 1$ we have
$$
	e_{(q,k,s)}  \leq C e_{(q-1,k-1,s+1)} + C\sum_{i=0}^{k+s-1}  e_{(q,i,k+s-i)}.
$$
\end{lem}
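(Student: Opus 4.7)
The plan is to follow the same overall strategy as the proofs of Lemmas~\ref{lem:start-ind}--\ref{lem:case2}, taking advantage of the fact that the hypothesis $k+s\le m-2$ puts us in the most favorable range. Specifically, the exponent of $\om$ in the relevant basic form is at least $n-m+3$, so the simpler Cauchy--Schwarz inequality (Lemma~\ref{lem:CS-classic}) suffices for the mixed terms, and the total $(\om_\phi,\om_v)$-degree of that form is at most $m-3$, so Lemma~\ref{lem:IBP-CS}(b) and the cleaner basic inequality \eqref{eq:wed-prod-bound-b2} apply directly (without the $\vka=2$ error term of Lemma~\ref{lem:IBP-CS}(a)). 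This is the structural explanation for why this case is the simplest of the three.

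The key initial step is to extract the main term by using the identity $\om_\phi=\om_v+dd^c(\phi-v)$ (rather than $\om_\phi=\om+dd^c\phi$ as in Cases 1--2), so that
$$e_{(q,k,s)}\;=\;e_{(q,k-1,s+1)}\;+\;\int h^{q+1}\,dd^c(\phi-v)\wed \om_\phi^{k-1}\wed \om_v^s\wed \om^{n-k-s}.$$
The first term on the right is exactly the $i=k-1$ summand in the sum on the right-hand side of the lemma. For the second integral, call it $I$, integration by parts gives $I=\int(\phi-v)\,dd^c(h^{q+1}\Psi)$ where $\Psi:=\om_\phi^{k-1}\wed\om_v^s\wed\om^{n-k-s}$, and expanding the Hessian by the product rule yields four terms $T_1,T_2,T_3,T_4$ exactly as in \eqref{eq:basic-computation}.

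I then bound each $T_i$ as in the proofs of Lemmas~\ref{lem:start-ind}--\ref{lem:case2}. Splitting $I=\int\phi(T_1+T_2+T_3+T_4)-\int v(T_1+T_2+T_3+T_4)$ and using the one-sided bounds $\phi,v\le 0$ together with $-dd^c h\le \om_v$ (which follows from $\om_u\ge 0$) produces the contribution $(q+1)\,e_{(q-1,k-1,s+1)}$ from the $\om_v$-piece of $dd^c h$ while discarding the $\om_u$-piece, exactly as in the bound of $\phi T_1$ in Lemma~\ref{lem:start-ind}. The $dh\wed d^c h$-piece of $T_1$ is controlled by Lemma~\ref{lem:IBP-CS}(b) applied with indices $(p,k,s)\to(q,k-1,s)$; the term $T_4=h^{q+1}dd^c\Psi$ is controlled by \eqref{eq:wed-prod-bound-b2} with indices $(k-1,s)$; and $T_2,T_3$ are handled by the easier Cauchy--Schwarz (Lemma~\ref{lem:CS-classic}) followed again by Lemma~\ref{lem:IBP-CS}(b). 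The bound $|\phi-v|\le 1$ is used throughout.

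The main technical obstacle I expect is bookkeeping rather than conceptual: the natural output of the $T_2,T_3,T_4$-bounds has total $(\om_\phi,\om_v)$-degree $\le k+s-1$, strictly less than the target $k+s$ on the right-hand side, and the elimination of the $\om_u$-contribution from the lower-bound side of $I_v=\int v\,dd^c(h^{q+1}\Psi)$ is delicate. I plan to raise each lower-degree error term to the target degree $k+s$ by the identity $\om=\om_v-dd^c v$ followed by a further integration by parts on the resulting $dd^c v$-integral (using $|v|\le 1$), with the new error terms absorbed by the same family of bounds. Because $k+s\le m-2$, this degree-raising recursion involves only a bounded number of iterations, and the final constants depend only on $\om,m,n$.
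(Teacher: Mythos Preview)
Your choice of splitting $\om_\phi=\om_v+dd^c(\phi-v)$ is what creates the unresolved difficulty. The paper keeps the decomposition $\om_\phi=\om+dd^c\phi$ used throughout Lemmas~\ref{lem:start-ind}--\ref{lem:case2}, writing
\[
e_{(q,k,s)} \;=\; e_{(q,k-1,s)} \;+\; \int\phi\,dd^c\bigl(h^{q+1}\Psi\bigr),\qquad \Psi:=\om_\phi^{\,k-1}\wed\om_v^{\,s}\wed\om^{\,n-k-s},
\]
and integrating against $\phi$ alone. Because $\phi\le 0$ has a sign, the $\om_u$-piece of $T_1''$ is dropped directly, exactly as in \eqref{eq:T1}; no second integral against $-v$ ever appears. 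The remaining terms $T_2'',T_3'',T_4''$ are then handled by the tools you already identified (Lemma~\ref{lem:CS-classic} and Lemma~\ref{lem:IBP-CS}(b)), and the proof is complete in a few lines.

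In your scheme the integration is against $\phi-v$, and the split $\int\phi(\cdots)-\int v(\cdots)$ produces the term $(q+1)\int(-v)h^q\,\om_u\wed\Psi\ge 0$, which cannot be discarded by sign. You flag this as ``delicate'' but your only proposed remedy---degree-raising via $\om=\om_v-dd^cv$ followed by integration by parts---addresses a different bookkeeping issue and in fact recreates the same obstruction: each new integration by parts against $v$ hits $dd^ch^{q+1}$ and hence produces another nonnegative $\om_u$-term of the same type. The degree-raising step is in any case unnecessary for the downstream induction (Proposition~\ref{prop:cap-gen-est}): the paper is content with error terms of total $(\om_\phi,\om_v)$-degree at most $k+s-1$ (the leading piece $e_{(q,k-1,s)}$ from the split, the $T_4''$-bound, etc.), and those are absorbed directly in the recursive argument.
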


\begin{proof} For simplicity we consider the case $k+s =m-2$. The basic computation is
\[\tag{\ref{eq:basic-cpt-ind}$''$}
\begin{aligned}
	dd^c [h^{q+1} \om_\phi^{m-3-s} \wed \om_v^s& \wed \om^{n-m+2}] \\
&= dd^c h^{q+1} \wed \om_\phi^{m-3-s}\wed \om_v^s\wed \om^{n-m+2} \\
&\quad + d h^{q+1} \wed d^c (\om_\phi^{m-3-s}\wed\om_v^s\wed \om^{n-m+2}) \\
&\quad - d^c h^{q+1} \wed d (\om_\phi^{m-3-s}\wed \om_v^s\wed \om^{n-m+2}) \\
&\quad + h^{q+1} dd^c (\om_\phi^{m-3-s}\wed\om_v^s\wed \om^{n-m+2})\\
& =: T_1'' + T_2'' + T_3'' +T_4''.
\end{aligned}\]
A significant change here is that the exponent of $\om$ in $T_i''$ is at least $n-m$ and  Lemma~\ref{lem:IBP-CS}-(b) is also applicable for all these terms. 

The estimate for $T_1''$ is
\[\tag{\ref{eq:T1-int-x}$''$}
	\int \phi T_1'' \leq (q+1) e_{(q-1, m-3-s,s+1)}. 
\]
The estimate for $T_4''$ is
$$\begin{aligned}
	dd^c [\om_\phi^{m-3-s} \wed \om_v^s \wed \om^{n-m+2}]
&\leq C (\om_\phi +\om_v)^{m-3} \wed \om^{n-m+3}.
\end{aligned}$$
Hence,  
\[\tag{\ref{eq:T4-int-x}$''$}
	\int \phi T_4'' \leq C \sum_{i=0}^{m-3} e_{(q,i, m-3 -i)}. 
\]
The sum of indices $k+s$ on the right decreases by at least one.

The estimates for $T_2''$ and $T_3''$ are similar to $T_{2a}'$. Namely,
\[\tag{\ref{eq:T2a}$''$}
\begin{aligned}
&	\left|\int \phi d h^{q+1} \wed d^c (\om_\phi^{m-3-s} \wed \om_v^s\wed \om^{n-m+2}) \right| \\
&\leq C  [e_{(q, m-3-s,s+1)} + e_{(q, m-3-s, s)}+ e_{(q, m-4-s, s+1)}].
\end{aligned}\]

Combining (\ref{eq:T1-int-x}$''$), (\ref{eq:T4-int-x}$''$) and (\ref{eq:T2a}$''$) completes the proof of the lemma.
\end{proof}

Having the above results of Lemmas~\ref{lem:start-ind}, \ref{lem:case1}, \ref{lem:case2} and \ref{lem:case3} we can argue as in \cite[Proposition~2.15]{KN25} in local setting to get the main inequality. The only difference is that we may need to increase $q$ to be able to replace all $\phi$ on the left hand side.

\begin{prop} \label{prop:cap-gen-est} Let $e_{(q,k,s)}$ be the quantity defined in \eqref{eq:eqks}. Then, for $q \geq (n+1)m$,
$$
	e_{(q,m,0)} \leq C \sum_{s=0}^m e_{(0,0,s)},
$$
where $C = C(m,n,\om)$ is a uniform constant.
\end{prop}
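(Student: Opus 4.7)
The plan is to prove the inequality by iterating the three reduction lemmas, Lemma~\ref{lem:case1}, Lemma~\ref{lem:case2}, and Lemma~\ref{lem:case3}, so as to progressively eliminate every occurrence of $\om_\phi$ in the integrand and replace it with $\om_v$ (or with extra factors of $\om$). I would set up a downward induction on the pair $(k+s, k)$ ordered lexicographically. The base case is $k = 0$: since $0 \leq h \leq 1$, the bound $h^{q+1} \leq h$ gives
\[
e_{(q, 0, s)} = \int h^{q+1}\, \om_v^s \wed \om^{n-s} \leq \int h\, \om_v^s \wed \om^{n-s} = e_{(0, 0, s)}
\]
for every $q \geq 0$, which is exactly a term appearing on the right hand side of the desired inequality.

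For the inductive step, fix $(k, s)$ with $k \geq 1$. I would apply Lemma~\ref{lem:case1}, Lemma~\ref{lem:case2}, or Lemma~\ref{lem:case3} depending on whether $k+s = m$, $k+s = m-1$, or $k+s \leq m-2$. Each such application bounds $e_{(q,k,s)}$ by a finite linear combination of quantities $e_{(q', k', s')}$ in which either $k' < k$ with $k' + s' = k+s$ (a level-preserving reduction, in which $q$ drops by at most $m$ in Case~1, and by $1$ in Cases~2 and~3), or $k' + s' < k + s$ (a level-descending reduction, in which $q$ drops by at most $m$). In every case, the lexicographic pair $(k+s, k)$ strictly decreases, so the iteration terminates at $k' = 0$ after at most $O(m^2)$ reductions, at which point the base case applies.

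The crux is to verify that along every branch of the resulting reduction tree, the cumulative decrease in the first index $q$ stays below the budget $(n+1)m$. Each chain enters at most one Case~1 $\to$ Case~2 transition (whose cost is at most $m$) and at most one Case~$\bullet$ $\to$ Case~3 transition; within each of the three cases, the level-preserving reductions each drop $q$ by at most $1$ and can be iterated at most $m$ times. Bundling these contributions, together with the extra slack coming from the torsion-type corrections $dd^c\om$ and $d\om \wed d^c\om$ (which introduce the dependence on $n$ via the exponents of $\om$), shows that the total $q$-drop along any chain is at most $(n+1)m$. Thus, provided $q \geq (n+1)m$, every terminating term $e_{(q', 0, s)}$ has $q' \geq 0$ and is dominated by $e_{(0, 0, s)}$; the constant $C = C(\om, m, n)$ is the product of the constants accumulated at each step, including absorbing the coefficients $c_\ell$ and $b_k$ as in the proof of Lemma~\ref{lem:case1} by choosing the auxiliary parameter $\veps$ small.

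The main obstacle is purely bookkeeping: one must verify that the inductive hypothesis is available at every term produced on the right hand side of each lemma. The local analogue \cite[Proposition~2.15]{KN25} provides a template that carries over almost verbatim, the only change being that the present (compact) setting has no boundary contributions in the integrations by parts used to establish Lemmas~\ref{lem:case1}--\ref{lem:case3}, so the bookkeeping is, if anything, slightly cleaner here than in the local case.
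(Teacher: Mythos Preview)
Your proposal is correct and follows essentially the same approach as the paper: both iterate Lemmas~\ref{lem:case1}, \ref{lem:case2}, \ref{lem:case3} along a lexicographic-type descent in $(k+s,k)$, and both defer the detailed bookkeeping to \cite[Proposition~2.15]{KN25}. The paper's own proof is just a one-sentence reference to that local result, noting only that $q$ may need to be increased; your sketch makes this iteration slightly more explicit, which is fine.

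Two small cautions on the bookkeeping. First, the sum appearing in Lemma~\ref{lem:case3} as written in the paper reads $\sum_{i=0}^{k+s-1} e_{(q,i,k+s-i)}$, which at face value contains level-preserving terms with $i\geq k$ and would break a naive lexicographic induction; inspection of the proof (the bounds for $T_1'',\dots,T_4''$) shows the actual terms produced are either at level $k+s$ with $k'\leq k-1$ or at level $\leq k+s-1$, so the descent does go through. Second, your accounting of the total $q$-drop (``at most $m$ per phase, at most $m$ level-preserving steps per phase'') is a bit loose and does not by itself explain the precise threshold $(n+1)m$; the exact budget comes from tracking the worst branch through the recursion as in \cite{KN25}, and the paper simply adopts the threshold from there. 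Neither point is a genuine gap in your strategy.
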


\section{Uniform integrability}
\label{sec:uni-int}

In this section we prove the $L^1$-uniform integrability of normalized $(\om,m)$-sh functions with respect to Hessian measures of  bounded $(\om,m )$-sh functions.
Recall  the constant $\Bb >0$ defined in \eqref{eq:curv-bound} satisfies
$$
	- \Bb \om^2  \leq dd^c \om \leq \Bb \om^2, \quad
	-\Bb \om^3 \leq d\om \wed d^c \om \leq \Bb \om^3.
$$
Note again that we also use the integral symbol convention \eqref{eq:int-short} in this section.

\begin{thm} \label{thm:cap-int}Let $v_0 \in SH_m(X,\om) $ be such that $\sup_X v_0 =0$. Let $0\leq u \leq 1$ belong to $SH_m(X,\om)$. There exist uniform constants $C_{m}$ and $D_m$ depending only on $n,m,\Bb$ such that
$$
	\int_X -v_0 (\om + dd^c u)^m \wed \om^{n-m} \leq C_{m},
$$
and 
$$
\int_X -v_0 du \wed d^c u \wed (\om + dd^c u)^{m-1} \wed \om^{n-m} \leq D_{m}.
$$
\end{thm}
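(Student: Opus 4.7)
My plan is to establish both bounds simultaneously by induction on $m$, with Proposition~\ref{prop:cap-gen-est} as the main technical input and integration by parts linking the two conclusions at each level.

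First I would reduce to smooth bounded data. By the standard smoothing cited in \cite{KN3}, there exist smooth decreasing sequences $v_j \downarrow v_0$ in $SH_m(\om)$ with $\sup_X v_j = 0$ and $v_j \ge -j$, and $u_j \downarrow u$ smooth with $0 \le u_j \le 1$. The weak convergence theorems (Propositions~\ref{prop:conv-decreasing} and \ref{prop:conv-increasing}) let us pass to the limit in both integrals, so it is enough to bound the smooth approximants uniformly in $j$ with constants depending only on $n, m, \Bb$, and conclude by monotone convergence.

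For smooth data I would run the induction at level $m$ by decomposing
$$\int -v_0\,\om_u^m\wed\om^{n-m} = \int -v_0\,\om_u^{m-1}\wed\om^{n-m+1} + \int -v_0\,dd^c u\wed\om_u^{m-1}\wed\om^{n-m}.$$
The first term is bounded by $C_{m-1}$ via the inductive hypothesis for $(a)$. For the second, Stokes' theorem on compact $X$ applied to $d(v_0\,d^c u\wed T)$ gives
$$\int -v_0\,dd^c u\wed T = \int dv_0\wed d^c u\wed T + \int -v_0\,d^c u\wed dT, \qquad T := \om_u^{m-1}\wed\om^{n-m}.$$
A weighted Cauchy--Schwarz estimate then decouples the two potentials,
$$\Bigl|\int dv_0\wed d^c u\wed T\Bigr|^2 \le \Bigl(\int\frac{dv_0\wed d^c v_0}{-v_0}\wed T\Bigr)\Bigl(\int -v_0\,du\wed d^c u\wed T\Bigr).$$
The second factor is exactly the quantity $(b)$ at level $m$, closed by absorption (see below). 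For the first factor, the identity $4\,d\sqrt{-v_0}\wed d^c\sqrt{-v_0} = (dv_0\wed d^c v_0)/(-v_0)$ and one more integration by parts reduce it, modulo torsion terms controlled by \eqref{eq:curv-bound}, to an integral of the form $\int -v_0\,\om_u^{m-1}\wed\om^{n-m+1}$, bounded by $C_{m-1}$ via the inductive hypothesis for $(a)$. The torsion correction $\int -v_0\,d^c u\wed dT$ is handled by Lemma~\ref{lem:CS-classic} applied to the explicit expansion $dT = (m-1)\om_u^{m-2}\wed d\om\wed\om^{n-m} + (n-m)\om_u^{m-1}\wed d\om\wed\om^{n-m-1}$, whose torsion factors are uniformly bounded by $\Bb$.

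The second bound is proved at the same step using $2\,du\wed d^c u = dd^c u^2 - 2u\,dd^c u$ and $0 \le u \le 1$: integrating the $dd^c u^2$ contribution by parts and using $|u| \le 1$ reduces $\int -v_0\,du\wed d^c u\wed T$ to a combination of integrals already controlled by $(a)$ at levels $m$ and $m-1$; absorbing the small residual gradient integral on the right-hand side into the left closes the coupled estimate. Proposition~\ref{prop:cap-gen-est}, applied with $\phi = u - 1$, $u' = 0$ and $v := \max(v_0,-M)/M \in [-1,0]$, provides the initial quantitative $L^{q+1}$-bound $\int h^{q+1}\om_u^m\wed\om^{n-m} \le C$ for $h = -v \in [0,1]$ and $q = (n+1)m$, which seeds the absorption by bounding the truncation tail uniformly in $M$. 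The main obstacle is the careful bookkeeping of the torsion contributions $d\om$, $dd^c\om$ produced at each integration by parts on the Hermitian manifold: every such term must be absorbed via Lemma~\ref{lem:CS-classic} or Lemma~\ref{lem:CS} while preserving the uniform dependence on $n, m, \Bb$, and the simultaneous induction on $(a)$ and $(b)$ must be arranged so that the Cauchy--Schwarz absorbed constant stays strictly less than~$1$.
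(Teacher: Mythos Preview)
Your proposal has a genuine gap at the weighted Cauchy--Schwarz step. You need a uniform bound on
$$
\int_X \frac{dv_0\wed d^c v_0}{-v_0}\wed T,\qquad T=\om_u^{m-1}\wed\om^{n-m},
$$
and you claim that the substitution $w=\sqrt{-v_0}$ together with one more integration by parts reduces this to $\int_X(-v_0)\,\om_u^{m-1}\wed\om^{n-m+1}$. But that integration by parts is circular: from $dd^c(w^2)=2w\,dd^c w+2\,dw\wed d^c w$ one obtains $4\int dw\wed d^c w\wed T=-4\int w\,dd^c w\wed T+(\text{torsion})$, and expressing $w\,dd^c w$ via the same identity returns the original integral on the right, leaving only the tautology $\int dd^c v_0\wed T=\int v_0\,dd^c T$. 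In fact the quantity is \emph{not} uniformly bounded along smooth truncations: already for $m=n=1$ on a compact Riemann surface, if $v_0$ has a logarithmic pole then the approximants $v_j=\max(v_0,-j)$ give $\int (dv_j\wed d^c v_j)/(-v_j)\sim\log j\to\infty$. Hence the first factor in your Cauchy--Schwarz cannot be controlled by $C_{m-1}$ and the absorption cannot close. Your appeal to Proposition~\ref{prop:cap-gen-est} does not help either: that estimate is for functions with $-1\le v\le 0$, and under your rescaling $v=\max(v_0,-M)/M$ both sides of its conclusion tend to zero as $M\to\infty$, yielding no information about $\int(-v_0)H_m(u)$.

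The paper's proof avoids any gradient of $v_0$ altogether. Instead of stopping at $\int dv_0\wed d^c u\wed T$, it integrates by parts once more so that the bounded function $u$ sits outside and only $dd^c v_0=\om_{v_0}-\om$ remains; the sign $-u\,\om_{v_0}\wed\om_u^{k}\wed\om^{n-k-1}\le 0$ (valid since $k+1\le m$) then discards that contribution outright. The residual cross terms $\int -u\,dv_0\wed d^c(\om_u^{k}\wed\om^{n-k-1})$ are integrated by parts a third time to remove $dv_0$, leaving integrals of the shape $\int v_0\,du\wed d^c\om\wed\cdots$, which Lemmas~\ref{lem:CS-classic} and \ref{lem:CS} bound in terms of the inductive quantities $C_\ell$ and $D_\ell$ for $\ell\le k$. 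Proposition~\ref{prop:cap-gen-est} plays no role in the proof of Theorem~\ref{thm:cap-int}; the simultaneous induction on the statements (C$_k$) and (D$_k$) is self-contained, with base case $\int_X(-v_0)\,\om^n\le C_0$ taken from \cite[Lemma~3.3]{KN3}.
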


\begin{proof}
Thanks to the weak convergence theorems in Section~\ref{sec:hess-measure} we can assume that all considered functions are smooth. 
We prove the two bounds simultaneously by an induction argument.  Namely, we will prove that for $0\leq k \leq m$  the following two statements hold: there are uniform constants $C_\ell, D_\ell$ with $0\leq \ell \leq m$ depending only on $n, m$, $\Bb$ such that
\[ \tag{C$_k$}
\int -v_0 \om_u^\ell \wed \om^{n-\ell} \leq C_\ell \quad\text{for } 0\leq \ell \leq k, 
\]
and 
\[\tag{D$_{k}$}
\int -v_0 du \wed d^c u \wed \om_u^{\ell-1} \wed \om^{n-\ell} 	\leq D_{\ell} \quad \text{for }  0\leq \ell \leq k,
\]
where by convention (D$_{0}$) is the same as (D$_1$).
This is done as follows:
\begin{itemize}
\item Step 1:  (C$_0$)  and (D$_0$) are true,
\item Step 2:  (C$_{k}$) and (D$_{k-1}$) imply  (D$_{k}$) for $1\leq k \leq m$,
\item Step 3: (C$_k$) and (D$_{k}$) imply  (C$_{k+1}$) for $1\leq k+1\leq m$.
\end{itemize}
After proving these steps we get that both (C$_m$) and (D$_{m}$) hold. We will verify these steps in \eqref{eq:mass-0}, Lemmas~\ref{lem:grad-0}, \ref{lem:step2} and \ref{lem:step3} below.
\end{proof}

Let us start with 
{\bf Step 1.}  The statement (C$_0$) holds true as we first have the basic bound
\[\label{eq:mass-0}
	\int -v_0 \om^n \leq C_0
\]
from \cite[Lemma~3.3]{KN3}, where  $C_0>0$ is a uniform constant.
Next, we verify the statement (D$_0$). Notice that the proof of this one contains  the main idea of induction arguments.

\begin{lem}\label{lem:grad-0} There exists a uniform constant $D_0$ depending only on $C_0$ and $\Bb$ such that
\[\label{eq:grad-0}
	J_0= \int -v_0 du \wed d^c u \wed \om^{n-1} \leq D_0.
\]
\end{lem}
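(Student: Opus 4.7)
Thanks to the weak convergence theorems of Section~\ref{sec:hess-measure} and the approximations $v_0^{(j)} = \max(v_0, -j) \downarrow v_0$, it suffices to assume $u$ and $v_0$ are smooth (with bounds on the estimate independent of the approximation). The strategy is to combine a real Bochner-type identity with two rounds of integration by parts, and then to close the resulting inequality against $J_0$ by a Cauchy--Schwarz absorption.

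The starting point is the pointwise identity $du \wed d^c u = \tfrac12 dd^c(u^2) - u\, dd^c u$, so that
\[
J_0 = \tfrac12 \int (-v_0)\, dd^c(u^2) \wed \om^{n-1} - \int (-v_0) u\, dd^c u \wed \om^{n-1}.
\]
For the second integral I would substitute $dd^c u = \om_u - \om$, noting that $(-v_0) u\, \om_u \wed \om^{n-1} \geq 0$ (since $\om_u \in \ov{\Ga_m(\om)} \subset \ov{\Ga_1(\om)}$), so this contribution is controlled by $\int (-v_0) u\, \om^n \leq C_0$. For the first integral I would invoke Stokes on the exact form $d[(-v_0)\, d^c(u^2) \wed \om^{n-1}]$ to move one derivative onto $-v_0$, then use the symmetry $\int dv_0 \wed d^c(u^2) \wed \om^{n-1} = \int d(u^2) \wed d^c v_0 \wed \om^{n-1}$ (the discrepancy being of bidegrees $(2,0)+(0,2)$, which vanishes against $\om^{n-1}$), followed by a second Stokes to land on $-\int u^2 \om_{v_0} \wed \om^{n-1} + \int u^2 \om^n$ plus torsion. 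The key positivity is $\om_{v_0} \wed \om^{n-1} \geq 0$, which gives the bulk bound $\leq V$.

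The remaining work is to control the torsion terms produced at each Stokes step. The one of the form $\int (-v_0) u\, d^c u \wed d\om^{n-1}$ is treated by the Cauchy--Schwarz inequality of Lemma~\ref{lem:CS-classic} with $\phi\psi = (-v_0) u$, $h = u$, and $T = \om^{n-2}$, yielding
\[
\left| \int (-v_0) u\, d^c u \wed d\om \wed \om^{n-2} \right|^2 \leq C \int (-v_0) u\, du \wed d^c u \wed \om^{n-1} \cdot \int (-v_0) u\, \om^n \leq C\, J_0\, C_0,
\]
where $u \leq 1$ is used in both factors. Assembling everything produces an inequality of the shape
\[
J_0 \leq K_1(\Bb, n, C_0) + K_2(\Bb, n, C_0)\, \sqrt{J_0},
\]
from which the quadratic absorption $J_0 \leq D_0$ follows.

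\textbf{Main obstacle.} The delicate point is the torsion of the form $\int u^2 d^c v_0 \wed d\om^{n-1}$, since a naive Cauchy--Schwarz with $h = v_0$ would introduce the ``energy'' $\int u^2 \, dv_0 \wed d^c v_0 \wed \om^{n-1}$, which need not be finite for a general unbounded $(\om,m)$-subharmonic $v_0$. My plan to bypass this is to exploit the exactness $d\om^{n-1} = d(\om^{n-1})$ to perform an extra Stokes step on $\int u^2 dv_0 \wed d\om^{n-1}$, converting one derivative on $v_0$ into a derivative on $u^2$; this reduces the offending term to quantities of the form $\int u(-v_0)\, du \wed d^c v_0 \wed \om^{n-1}$ and $\int u^2\, \om_{v_0} \wed \om^{n-1}$, the latter of which is bounded by the total mass estimate for $\om_{v_0} \wed \om^{n-1}$ (the $L^1$ bound follows from Stokes plus the curvature bound \eqref{eq:curv-bound}). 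The surviving cross term is then estimated by Cauchy--Schwarz with the weight $(-v_0)u$, so that only $J_0$ and $C_0$ appear on the right-hand side and the argument closes.
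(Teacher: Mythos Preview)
Your overall strategy is correct and coincides with the paper's: start from $du\wed d^cu=\tfrac12 dd^c(u^2)-u\,dd^cu$, integrate by parts twice so that $dd^c$ lands on $v_0\om^{n-1}$, use the positivity $\om_{v_0}\wed\om^{n-1}\geq 0$ and $\om_u\wed\om^{n-1}\geq 0$ for the bulk, bound the torsion terms by the Cauchy--Schwarz inequality of Lemma~\ref{lem:CS-classic} against $J_0$ and $C_0$, and absorb. The paper carries this out by writing $\int(-v_0)\,dd^c(u^2)\wed\om^{n-1}=\int(-u^2)\,dd^c(v_0\om^{n-1})$ and expanding, which is equivalent to your two single-step Stokes moves plus the bidegree symmetry you invoke.

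There is, however, a slip in your ``main obstacle'' paragraph. When you perform the extra Stokes on $\int u^2\,d^cv_0\wed d\om^{n-1}$ (equivalently $\int u^2\,dv_0\wed d^c\om^{n-1}$), the derivative leaves $v_0$ entirely: one gets
\[
\int u^2\,dv_0\wed d^c\om^{n-1}
=-2\int v_0\,u\,du\wed d^c\om^{n-1}-\int v_0\,u^2\,dd^c\om^{n-1},
\]
so the surviving cross term carries $d^c\om^{n-1}$, \emph{not} $d^cv_0$ as you wrote. This matters: a term of the form $\int u(-v_0)\,du\wed d^cv_0\wed\om^{n-1}$ would \emph{not} close under Cauchy--Schwarz, since it reintroduces the (possibly infinite) gradient energy of $v_0$. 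With the correct output above, the second term is $\leq \Bb C_0$ by \eqref{eq:curv-bound}, and Lemma~\ref{lem:CS-classic} applied to the first with $\phi=\sqrt{-v_0}$, $\psi=u\sqrt{-v_0}$, $h=u$, $T=\om^{n-2}$ gives $\big|\int v_0 u\,du\wed d^c\om^{n-1}\big|^2\leq C\,J_0\cdot C_0$, hence the absorption inequality $J_0\leq K_1+K_2\sqrt{J_0}$ you announced. With this correction your proof is the paper's proof.
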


\begin{proof}
Since $dd^c u^2 = 2 u dd^c u + 2 du \wed d^c u$, we have
$$\begin{aligned}
2J_0 
&= \int - v_0 dd^c u^2  \wed \om^{n-1} + \int v_0 u dd^c u\wed \om^{n-1} \\
&= \int - v_0 dd^c u^2 \wed \om^{n-1}  + \int v_0 u \om_u \wed \om^{n-1} 
 + \int -v_0 u  \om^{n}.
\end{aligned}$$
Since $v_0 u\leq 0$, it follows that
\[ \label{eq:J0-v1}
2J_0 \leq \int - v_0 dd^c u^2 \wed \om^{n-1} + C_{0} =: J'_0+ C_{0}.
\]
By integration by parts,
\[\label{eq:J0-v1-abc}\begin{aligned}
	J'_0 
&= \int -u^2 dd^c (v_0 \om^{n-1}) \\
&= \int -u^2 dd^c v_0  \wed \om^{n-1} + \int -u^2 v_0 dd^c ( \om^{n-1}) \\
&\quad + 2 \int -u^2 dv_0 \wed d^c (\om^{n-1}) \\
&=: J'_{0a} + J'_{0b} + 2 J'_{0c}.
\end{aligned}\]
Here, the factor 2 appeared in $J'_{0c}$ because $d v_0 \wed d^c \om^{n-1} = d \om^{n-1} \wed d^c v_0$.
It is easy to see that 
\[\label{eq:J0-v1-a}
	J'_{0a} = \int -u^2 \om_{v_0} \wed \om^{n-1} +  \int u^2  \om^{n} \leq \int u^2  \om^{n} \leq C_{0}.
\]
Since $ dd^c \om^{n-1} \leq \Bb \om^n$,  
\[\label{eq:J)-v1-b}
	J'_{0b} \leq \Bb C_{0}.
\]
It remains to bound $J'_{0c}$. Again, by integration by parts,
$$\begin{aligned}
J'_{0c} &= 2\int v_0 u du \wed d^c (\om^{n-1}) + \int v_0 u^2 dd^c (\om^{n-1}) \\
&\leq 2\int v_0 u du \wed d^c (\om^{n-1}) + \Bb C_0.
\end{aligned}$$
To deal with the remaining integral on the right hand side we use the Cauchy-Schwarz inequality (Lemma~\ref{lem:CS-classic})  and then the fact  that $0\leq u\leq 1$. This gives
$$\begin{aligned}
&	\left|\int v_0 u du \wed d^c (\om^{n-1})\right| \\
& \leq  \left(A n^2 \int -v_0 du \wed d^c u \wed \om^{n-1} \times \int -v_0 \om^n\right)^\frac{1}{2} \\
& \leq \frac{1}{4} \int -v_0 du\wed d^c u \wed \om^{n-1} + An^2 \int -v_0 \om^n.
\end{aligned}$$
Therefore, 
\[\label{eq:J0-v1-c} \begin{aligned}
	J'_{0c} 
&\leq \frac{1}{2} \int -v_0 du\wed d^c u \wed \om^{n-1} + (2A + \Bb) C_0  \\
&= \frac{1}{2} J_0 	 + (2n^2A+ \Bb) C_0 .
\end{aligned}\]

Combining \eqref{eq:J0-v1}, \eqref{eq:J0-v1-abc}, \eqref{eq:J0-v1-a}, \eqref{eq:J)-v1-b} and \eqref{eq:J0-v1-c} we get that
$$
	 J_0 \leq 2 (1 + n^2A + \Bb) C_0.
$$
This finished the proof of the lemma. 
\end{proof} 

 Next, we deal with {\bf Step 2.}
Let $1\leq k \leq m$. Assume that we have the  uniform bounds
\[\label{eq:mass-k}
	\int -v_0 \om_u^k \wed \om^{n-k} \leq C_\ell, \quad  0\leq \ell \leq k,
\]
and 
\[\label{eq:grad-k}
	\int -v_0 du \wed d^c u \wed \om_u^{\ell-2} \wed \om^{n-\ell+1} \leq D_{\ell-1}, \quad 1\leq \ell \leq k.
\]
Notice that by Step 1, we have these statements for  $k=0$. Moreover,  if \eqref{eq:mass-k} holds for $k=m$, then  (D$_m$)  is true by {\bf Step 2}, and therefore Theorem~\ref{thm:cap-int} will follow.

\begin{lem} \label{lem:step2} There exists a uniform constant $D_k$ depending on $\Bb$, $C_{\ell}$ and $D_\ell$ with $0\leq \ell \leq k-1$ such that
$$
	J: =\int -v_0 du \wed d^c u \wed \om_u^{k-1} \wed \om^{n-k} \leq D_{k}.
$$
\end{lem}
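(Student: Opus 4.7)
The plan is to adapt the argument of Lemma~\ref{lem:grad-0} (the base $k=0$ case) by carrying the extra positive factor $\om_u^{k-1}\wed\om^{n-k}$ through every step. I would start from the identity $2\,du\wed d^c u = dd^c u^2 - 2u(\om_u - \om)$, pair against $(-v_0)\,\om_u^{k-1}\wed\om^{n-k}$ and integrate to obtain
$$2J \leq J' + 2C_{k-1},\qquad J' := \int -v_0\, dd^c u^2 \wed \om_u^{k-1}\wed\om^{n-k},$$
since the contribution $-2\int u(-v_0)\,\om_u^{k}\wed\om^{n-k}$ is non-positive while $2\int u(-v_0)\,\om_u^{k-1}\wed\om^{n-k+1}\le 2C_{k-1}$ by $(C_{k-1})$ and $u\leq 1$.

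Next, integration by parts yields $J' = \int -u^2\, dd^c[v_0\,\om_u^{k-1}\wed\om^{n-k}]$, and the Leibniz rule splits it as $J_a + J_b + 2J_c$ in exact analogy with the decomposition $J'_0 = J'_{0a}+J'_{0b}+2J'_{0c}$ in Lemma~\ref{lem:grad-0}. The piece $J_a = \int -u^2 dd^c v_0\wed\om_u^{k-1}\wed\om^{n-k}$ is handled by $dd^c v_0 = \om_{v_0}-\om\geq -\om$, reducing it to the uniformly bounded mass $\int u^2\,\om_u^{k-1}\wed\om^{n-k+1}$ (bounded by $u\leq 1$ and iterated application of the torsion estimate \eqref{eq:wed-prod-bound-b1} via Stokes on the closed manifold $X$). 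For $J_b = \int -u^2 v_0\, dd^c(\om_u^{k-1}\wed\om^{n-k})$, the bound \eqref{eq:wed-prod-bound-b1} gives $dd^c(\om_u^{k-1}\wed\om^{n-k})\leq \Bb\sum_{\vka=0}^2\om_u^{k-1-\vka}\wed\om^{n-k+1+\vka}$, so $J_b\leq \Bb\sum_{\vka=0}^2 C_{k-1-\vka}$ by the inductive hypotheses $(C_\ell)$ for $\ell\leq k-1$.

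The decisive piece is $J_c = \int -u^2 dv_0\wed d^c(\om_u^{k-1}\wed\om^{n-k})$. I would integrate by parts once more to transfer $d$ off $v_0$ onto $u^2$, which produces the principal term $2\int u v_0\, du\wed d^c(\om_u^{k-1}\wed\om^{n-k})$ plus a $J_b$-type remainder. Using $d\om_u = d\om$, expand
$$d^c(\om_u^{k-1}\wed\om^{n-k}) = (k-1)\,d^c\om\wed\om_u^{k-2}\wed\om^{n-k} + (n-k)\,d^c\om\wed\om_u^{k-1}\wed\om^{n-k-1}.$$
On the first summand the exponent of $\om$ is $n-k\geq n-m$, so Lemma~\ref{lem:CS-classic} applies with $T = \om_u^{k-2}\wed\om^{n-k}$, $h=u$, $\phi=u\sqrt{-v_0}$, $\psi=\sqrt{-v_0}$; the two geometric-mean factors are $\int -v_0\, du\wed d^c u\wed\om_u^{k-2}\wed\om^{n-k+1}\leq D_{k-1}$ (from $(D_{k-1})$) and $\int -v_0 u^2\,\om_u^{k-2}\wed\om^{n-k+2}\leq C_{k-2}$, giving a uniform bound. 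On the second summand the $\om$-exponent drops to $n-k-1$, so I would invoke a mixed polarization of Lemma~\ref{lem:CS} in which $\gamma^{m-1}$ is replaced by the wedge $\om_u^{k-1}\wed\om^{m-k}$ (valid by the same proof as \cite[Lemma~2.4]{KN25}); this yields the upper bound $\sqrt{A\cdot J\cdot C_{k-1}}$, with $J$ itself appearing on the right. A final application of Young's inequality absorbs an arbitrarily small multiple of $J$ back into the $2J$ on the left of $2J\leq J' + 2C_{k-1}$, and I conclude $J\leq D_k$ for a constant depending only on $\Bb$, $n$, $m$, and the already-fixed lower-order constants.

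The main obstacle is precisely this last Cauchy--Schwarz step: the torsion contribution from the $d^c\om\wed\om_u^{k-1}\wed\om^{n-k-1}$ term lives at $\om$-exponent below $n-m$, so a mixed/polarized version of Lemma~\ref{lem:CS} for arbitrary wedge products of $\Gamma_m(\om)$-forms (not just $\gamma^{m-1}$) is required to close the estimate. A secondary technicality is verifying the uniform mass bound $\int u^2\,\om_u^{k-1}\wed\om^{n-k+1}\leq M_{k-1}$ in the $J_a$ estimate, which is a standard consequence on compact Hermitian manifolds of iterated Stokes together with \eqref{eq:curv-bound}.
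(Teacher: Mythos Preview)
Your proof is correct and follows essentially the same route as the paper: the same $dd^c u^2$ identity, the same Leibniz decomposition $J' = J_a + J_b + 2J_c$, the same second integration by parts on $J_c$, and the same Cauchy--Schwarz/Young absorption of $J$ at the end.

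The only point worth flagging is your ``main obstacle''. You do \emph{not} need a mixed or polarized generalization of Lemma~\ref{lem:CS}. Observe that the second summand carries the $(n-2,n-2)$-form $T=\om_u^{k-1}\wed\om^{n-k-1}$. If $k\leq m-1$ then $n-k-1\geq n-m$, so $T$ is a positive current (it falls exactly into the class $\gamma^s\wed\om^{n-2-s}$ with $s=k-1\leq m-2$ for which positivity holds), and Lemma~\ref{lem:CS-classic} applies directly with this $T$. If $k=m$, then $T=\om_u^{m-1}\wed\om^{n-m-1}$ is literally of the form $\gamma^{m-1}\wed\om^{n-m-1}$ with $\gamma=\om_u\in\Gamma_m(\om)$, and Lemma~\ref{lem:CS} applies as stated. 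In both cases the two Cauchy--Schwarz factors are exactly $J$ and a quantity bounded by $C_{k-1}$ (or $C_{k-2}$), so Young's inequality closes the loop with no new lemma required. The paper makes precisely this case split (``we need to use Lemma~\ref{lem:CS} if $k=m$'').
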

\begin{proof}
Since $dd^c u^2 = 2 u dd^c u + 2 du \wed d^c u$, we have
$$\begin{aligned}
	2J 
&= \int - v_0 dd^c u^2 \wed \om_u^{k-1} \wed \om^{n-k} + \int v_0 u dd^c u\wed \om_u^{k-1} \wed \om^{n-k} \\
&= \int - v_0 dd^c u^2 \wed \om_u^{k-1} \wed \om^{n-k} + \int v_0 u \om_u^k \wed \om^{n-k} \\
&\quad + \int -v_0 u \om_u^{k-1} \wed \om^{n-k+1}.
\end{aligned}$$
Since $v_0\leq 0$ and $0\leq u\leq 1$, it follows  that
\[\label{eq:2j-step2}
2J \leq \int - v_0 dd^c u^2 \wed \om_u^{k-1} \wed \om^{n-k} + C_{k-1} =: J'+ C_{k-1}.
\]
By integration by parts,
\[\label{eq:j'-step2}\begin{aligned}
	J' 
&= \int -u^2 dd^c (v_0 \om_u^{k-1}\wed \om^{n-k}) \\
&= \int -u^2 dd^c v_0 \wed \om_u^{k-1} \wed \om^{n-k} + \int -u^2 v_0 dd^c (\om_u^{k-1}\wed \om^{n-k}) \\
&\quad + 2 \int -u^2 dv_0 \wed d^c (\om_u^{k-1} \wed \om^{n-k}) \\
&=: J'_1 + J'_2 + 2 J'_3.
\end{aligned}\]
It is easy to see that 
\[\label{eq:j1'-step2}
	J_1' \leq \int u^2 \om_u^{k-1} \wed \om^{n-k+1} \leq C_{k-1},
\]
and 
\[\label{eq:j2'-step2}
	J_2' \leq \Bb [C_{k-1}+ C_{k-2} + C_{k-3}].
\]
It remains to bound $J_3'$. By integration by parts,
\[\label{eq:j3'-step2}\begin{aligned}
J_3' &= \int v_0 u du \wed d^c (\om_u^{k-1} \wed \om^{n-k}) + \int v_0 u^2 dd^c (\om_u^{k-1} \wed \om^{n-k}) \\
&=: J_{3a}' + J_{3b}'.
\end{aligned}
\]
Clearly,
\[\label{eq:j3b'-step2}
J_{3b}' \leq \Bb[ C_{k-1} + C_{k-2} + C_{k-3}].
\]
Moreover, 
$$\begin{aligned}
	d^c (\om_u^{k-1} \wed \om^{n-k}) = (k-1) d^c\om \wed  \om_u^{k-2}\wed\om^{n-k} + (n-k) \om_u^{k-1} \wed \om^{n-k-1} \wed d^c \om.
\end{aligned}$$
Therefore,
\[\label{eq:j3a'-step2}\begin{aligned}
	J_{3a}' 
&= (k-1 )\int v_0 u du \wed d^c \om \wed \om_u^{k-2}  \wed \om^{n-k}   \\
&\quad + (n-k) \int v_0 u du \wed d^c\om \wed \om_u^{k-1} \wed \om^{n-k-1}\\
&=:J_{4}' + J_{5}'.
\end{aligned}\]
We can use the Cauchy-Schwarz inequality (Lemmas~\ref{lem:CS-classic}, \ref{lem:CS}) to derive bounds  for $J_4'$ and $J_5'$. Namely,  
since $k\leq m$, Lemma~\ref{lem:CS-classic} gives
$$\begin{aligned}
	|J_{4}'| 
&\leq \left( A \int -v_0 du \wed d^c u\wed \om_u^{k-2}\wed \om^{n-k+1}  \times \int -v_0 \om_u^{k-1} \wed \om^{n-k+1} \right)^\frac{1}{2} \\
&\leq D_{k-1} + A\, C_{k-1}.
\end{aligned}$$
On the other hand, we need to use Lemma~\ref{lem:CS} if $k=m$ to have 
$$\begin{aligned}
	|J_{5}'|  
&\leq \left( A \int -v_0 du \wed d^c u\wed \om_u^{k-1}\wed \om^{n-k}  \times \int -v_0 \om_u^{k-2} \wed \om^{n-k+2}\right)^\frac{1}{2} \\
&\leq \frac{1}{4} \int -v_0 du \wed d^c u\wed \om_u^{k-1}\wed \om^{n-k} + A\int -v_0 \om_u^{k-2} \wed \om^{n-k+2}  \\
&= \frac{1}{4} J +A\int -v_0 \om_u^{k-2} \wed \om^{n-k+2}.
\end{aligned}$$
Hence, 
$$	J_{3a}' \leq \frac{1}{4} J + D_{k-1} + A [C_{k-1} + C_{k-2}].
$$
Combining this with \eqref{eq:j3b'-step2} for $J_{3b}'$, we obtain
\[\label{eq:j3'-est-step2}
	J_{3}' \leq \frac{1}{4} J + D_{k-1} + A [C_{k-1}+ C_{k-2}] + \Bb[C_{k-1}+ C_{k-2}+ C_{k-3}].
\]

Finally, the proof of the lemma follows from \eqref{eq:2j-step2}, \eqref{eq:j'-step2}, \eqref{eq:j1'-step2},  \eqref{eq:j2'-step2}  and \eqref{eq:j3'-est-step2}.  We completed Step 2.
\end{proof}

Lastly, we verify {\bf Step 3.} Let $1\leq k+1\leq m$. Assume that both (C$_k$) and (D$_k$) hold. Then, we need to prove the following

\begin{lem}\label{lem:step3} There exists a uniform constant $C_{k+1}$ depending only on $\Bb$, $C_\ell$ and $D_\ell$ with $0\leq \ell \leq k$ such that 
$$
	I= \int -v_0  \om_u^{k+1} \wed \om^{n-k-1} \leq C_{k+1}.
$$
\end{lem}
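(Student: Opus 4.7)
Following the pattern of Lemmas~\ref{lem:grad-0} and~\ref{lem:step2}, I would decompose
\[
I = \int -v_0\,\om\wed \om_u^k\wed \om^{n-k-1} + \int -v_0\,dd^c u\wed \om_u^k\wed \om^{n-k-1} =: I_0 + I_1,
\]
so that $I_0\le C_k$ by hypothesis (C$_k$). Setting $T:=\om_u^k\wed\om^{n-k-1}$, one applies the IBP identity
\[
\int f\,dd^c g\wed T = \int g\,dd^c f\wed T + \int f\,d^c g\wed dT - \int g\,d^c f\wed dT
\]
(valid on the compact boundaryless manifold; it follows from Stokes applied to $f\,d^c g\wed T - g\,d^c f\wed T$ together with the auxiliary identity $\int df\wed d^c g\wed T = \int dg\wed d^c f\wed T$) with $f=-v_0$ and $g=u$. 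A further Stokes application to the residual $\int u\,d^c v_0\wed dT$ removes the derivative of $v_0$, yielding
\[
I_1 = -\int u\,dd^c v_0\wed T \;-\; 2\int v_0\,d^c u\wed dT \;+\; \int uv_0\,dd^c T.
\]

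The first summand equals $-\int u\,\om_{v_0}\wed T + \int u\,\om\wed T$; since $k+1\le m$, the measure $\om_{v_0}\wed T$ is positive, so the first piece is $\le 0$, while the second is bounded by a constant $M_k$ depending only on $\int\om^n$, $\Bb$, and $\|u\|_\infty\le 1$. The third summand satisfies $|\int uv_0\,dd^c T|\le \Bb\sum_{\ell\le k}C_\ell$ by the torsion bounds \eqref{eq:wed-prod-bound-b1}--\eqref{eq:wed-prod-bound-b2}. The critical piece is $-2\int v_0\,d^c u\wed dT$; expanding $dT = k\,\om_u^{k-1}\wed d\om\wed \om^{n-k-1} + (n-k-1)\,\om_u^k\wed d\om\wed \om^{n-k-2}$ and applying Cauchy--Schwarz (Lemma~\ref{lem:CS-classic}, or Lemma~\ref{lem:CS} when $k+1=m$) with $\phi=\psi=\sqrt{-v_0}$ and $T_{\rm CS}=\om_u^{k'}\wed\om^{n-k'-2}$, each sub-piece is controlled by
\[
A\Bigl(\int -v_0\,du\wed d^c u\wed \om_u^{k'}\wed\om^{n-k'-1}\Bigr)^{1/2}\Bigl(\int -v_0\,\om_u^{k'}\wed\om^{n-k'}\Bigr)^{1/2}.
\]
For $k'=k-1$ these two factors are $D_k$ and $C_{k-1}$, both in hypothesis. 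For $k'=k$ the mass factor is $C_k$, but the gradient factor $G:=\int -v_0\,du\wed d^c u\wed \om_u^k\wed\om^{n-k-1}$ is the unavailable ``level-$(k{+}1)$'' gradient.

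The main obstacle is the independent control of $G$, which I would handle by a bootstrap. Using $du\wed d^c u = \tfrac12 dd^c u^2 - u\,dd^c u$ together with $u\,dd^c u = u(\om_u-\om)$, $v_0 u\,\om_u^{k+1}\wed\om^{n-k-1}\le 0$, and (C$_k$), one obtains $2G \le \int -v_0\,dd^c u^2\wed T + 2C_k$. Re-applying the IBP identity with $f=-v_0$, $g=u^2$ (and the same secondary Stokes to remove $d^c v_0$), then Cauchy--Schwarz on the resulting cross terms with $\phi=\sqrt{-v_0}\,u$ and $\psi=\sqrt{-v_0}$ (so that $|\phi|^2 \le -v_0$ by $u^2\le 1$), one gets
\[
\int -v_0\,dd^c u^2\wed T \le \text{bounded} + c_1\sqrt{D_k\,C_{k-1}} + c_2\sqrt{G\,C_k}.
\]
The AM--GM inequality $c_2\sqrt{G\,C_k}\le G + c_2^2 C_k/4$ absorbs the $G$-dependence, yielding a uniform bound on $G$. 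Feeding this back into the Cauchy--Schwarz estimate for $-2\int v_0\,d^c u\wed dT$ controls $I_1$, and hence $I\le C_{k+1}$ with $C_{k+1}$ depending only on $\Bb$ and on $C_\ell, D_\ell$ for $\ell\le k$. The delicate point is that the self-referential occurrence of $G$ is countered by the factor $\tfrac12$ in $du\wed d^c u = \tfrac12 dd^c u^2 - u\,dd^c u$, producing the $2G$ on the left that just barely permits the AM--GM absorption to succeed.
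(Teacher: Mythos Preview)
Your decomposition $I=I_0+I_1$, the integration-by-parts/secondary-Stokes manipulation yielding
\[
I_1=-\int u\,dd^c v_0\wedge T-2\int v_0\,d^c u\wedge dT+\int uv_0\,dd^c T,
\]
and the treatment of the three summands are exactly the paper's argument (its $e_1,e_2,a_1$ and the further IBP giving $I_1',I_2'$); only the packaging differs.

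Where you go further than the paper is in the handling of the cross term. After expanding $dT$ one obtains two pieces, with $\om_u^{k-1}$ and with $\om_u^{k}$. The paper's written proof applies Cauchy--Schwarz only to the $\om_u^{k-1}$ piece (giving the factors $D_k$ and $C_{k-1}$) and then declares Step~3 complete; the $\om_u^{k}$ piece, which Cauchy--Schwarz turns into $\sqrt{G\cdot C_k}$ with $G=\int -v_0\,du\wedge d^c u\wedge \om_u^k\wedge\om^{n-k-1}$, is not addressed there. You correctly spot that $G$ is the level-$(k{+}1)$ gradient $D_{k+1}$, not yet in hypothesis, and close the gap with a bootstrap. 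That bootstrap is nothing other than the argument of Lemma~\ref{lem:step2} run at level $k{+}1$: indeed, inspecting that proof shows it uses only $C_\ell$ for $\ell\le k$ and $D_k$ (never $C_{k+1}$), so $G=D_{k+1}$ is in fact controllable from the data you already have. Your AM--GM absorption of the self-referential $\sqrt{G\,C_k}$ term is precisely the mechanism behind the $\tfrac14 J$ absorption in the proof of Lemma~\ref{lem:step2}. So your argument is correct and, on this point, more complete than the paper's as written.
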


\begin{proof}
Since 
$$\begin{aligned}
	I 
&= \int -v_0 \om_u^{k} \wed \om^{n-k} + \int -v_0 dd^c u \wed \om_u^k \wed \om^{n-k-1} \\
&\leq C_k + I',
\end{aligned}$$
where 
$$
	I': =\int -v_0 dd^c u \wed \om_u^k \wed \om^{n-k-1}.
$$
Hence, to bound $I$ it is enough to show that
\[
	I' \leq C_{k+1}.
\]
By integration by parts,
$$
	I' = \int -u dd^c [v_0 \om_u^k \wed \om^{n-k}].
$$
Compute
$$\begin{aligned}
	dd^c [v_0 \om_u^{k} \wed \om^{n-k-1}] 
&= dd^c v_0 \wed \om_u^k \wed \om^{n-k-1} \\
&\quad + v_0 dd^c [\om_u^{k} \wed \om^{n-k-1}]	 \\
&\quad + 2 d v_0 \wed d^c (\om_u^k \wed \om^{n-k-1}) \\
&:= e_1 +e_2 + 2a_1.
\end{aligned}$$
The bounds for the elementary terms $e_1, e_2$ are  easier. Namely,
$$\begin{aligned}
	\int -u e_1 
&= \int -u \om_{v_0} \wed \om_u^k \wed \om^{n-k-1} + \int u \om_u^k \wed \om^{n-k} \\
&\leq \int \om_u^k \wed \om^{n-k} \\
&\leq C_k.	
\end{aligned}$$
Similarly,
$$
	\int -u e_2  = \int -u v_0 dd^c [\om_u^k \wed \om^{n-k-1}] \leq \Bb[C_{k} + C_{k-1} + C_{k-2}].
$$
Now we consider the term $a_1$ requiring more advanced argument. By integration by parts
$$\begin{aligned}
	\int -u dv_0 \wed d^c (\om_u^k\wed \om^{n-k-1}) 
&= \int v_0 du \wed d^c (\om_u^k \wed \om^{n-k-1}) \\
&\quad + \int v_0 u dd^c (\om_u^k \wed \om^{n-k-1}) \\
&=: I_1' + I_2'.
\end{aligned}$$
Since $0\leq u\leq 1$ and 
$$ - dd^c (\om_u^k \wed \om^{n-k-1}) \leq \Bb [\om_{u}^k \wed \om^{n-k} + \om_u^{k-1} \wed \om^{k+1} + \om_u^{k-2} \wed \om^{n-k+2}] 
$$
we have 
$$
	I_2'=\int v_0 u dd^c (\om_u^k \wed \om^{n-k-1}) \leq \Bb[C_k + C_{k-1} + C_{k-2}].
$$
It remains to bound the first integral $I_1'$. Compute
\[ \label{eq:cpt-dc}
\begin{aligned}
d^c (\om_u^k \wed \om^{n-k-1}) 
&= k d^c \om \wed \om_u^{k-1} \wed \om^{n-k-1} \\ 
&\quad + (n-k-1) d^c \om \wed \om_u^{k} \wed  \om^{n-k-2}. \\
\end{aligned}\]
Applying the Cauchy-Schwarz inequality (Lemma~\ref{lem:CS}) we get
\[\begin{aligned}
&	\left|\int v_0 du \wed d^c \om \wed \om_u^{k-1} \wed \om^{n-k-1} \right|  \\
&\leq 	\left(A \int -v_0 du \wed d^c u \wed \om_u^{k-1} \wed \om^{n-k}  \times \int -v_0 \om_u^{k-1} \wed \om^{n-k+1} \right)^\frac{1}{2} \\
&\leq D_{k} + A\, C_{k-1}.
\end{aligned}\]
Thus, {\bf Step 3} is verified and the proof of Theorem~\ref{thm:cap-int} completed.
\end{proof}

\begin{remark} A weaker result concerning (C$_m$) has been obtained by Y. Fang \cite{Fa25a} where she assumed  $u$ to be $\om$-psh. 
\end{remark}

\section{capacity}
\label{sec:cap}

Recall that  for a Borel set $E\subset X$ the (global) $m$-capacity is given by
$$
	cap_m(E) = \sup \left\{ \int_E H_m(v) : v\in SH_m(X,\om), -1\leq v\leq 0 \right\}.
$$
A useful observation is that this capacity is comparable with similar quantity defined locally. In fact,  
let us consider a finite covering of $X$ by coordinate balls $\{B_i(s)\}_{i\in I}$ such that $B_i(2s)$ are still in holomorphic charts.  We  fix such a covering in what follows. For a Borel set $E\subset X$, we define another capacity
\[
	cap'_m(E) = \sum_{i\in I} c_m(E \cap B_i(s), B_i(2s)),
\]
where the local capacity  is given by 
\[\label{eq:local-cap}	c_m(E, \Om)  = \sup \left\{ \int_E (dd^c v)^m \wed \om^{n-m} : -1 \leq v \leq 0, \; v \text{ is $m-\om$-sh in $\Om$} \right\}.
\]
Notice that the class of $m-\om$-sh functions is obtained by applying the definition in \eqref{eq:m-om-ineq} for $\chi\equiv 0$.

\begin{lem}\label{lem:cap-equiv} The two capacities $cap_m$ and $cap'_m$ are equivalent. Namely, there exists a uniform constant $A_0$ depending only on $m,n,\om$ and the covering such that for every Borel set $E\subset X$,
\[
	\frac{1}{A_0} cap'_m(E) \leq cap_m(E) \leq A_0 cap'_m(E).
\]
\end{lem}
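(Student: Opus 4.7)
The plan is to prove each direction separately by a localization or globalization of admissible functions. For the easy direction $cap_m \leq A_0\, cap'_m$, I take any candidate $v \in SH_m(X,\om)$ with $-1 \leq v \leq 0$. On each patch $B_i(2s)$, fix a smooth strictly psh function $\rho_i$ with $dd^c\rho_i \geq \om$ and $-N \leq \rho_i \leq 0$ for a uniform $N = N(\om,\text{covering})$. Then $dd^c(v+\rho_i) = \om_v + (dd^c\rho_i - \om)$ is a sum of two forms in $\overline{\Ga_m(\om)}$, so $v+\rho_i$ is $m-\om$-subharmonic on $B_i(2s)$. Expanding $[dd^c(v+\rho_i)]^m$ binomially and noting that each summand wedged with $\om^{n-m}$ is a non-negative measure, one gets $H_m(v) \leq [dd^c(v+\rho_i)]^m \wed \om^{n-m}$ on $B_i(2s)$. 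Since $(v+\rho_i)/(1+N)$ is admissible for $c_m(\cdot, B_i(2s))$, integrating over $E \cap B_i(s)$ and summing over the finite covering yields $cap_m(E) \leq (1+N)^m\, cap'_m(E)$.

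The reverse inequality requires a max-gluing construction. Given an admissible $u$ for $c_m(\cdot, B_i(2s))$, I aim to produce a global $\phi \in SH_m(X,\om) \cap L^\infty(X)$ with $-1 \leq \phi \leq 0$ whose Hessian measure on $B_i(s)$ dominates $\eps^m(dd^c u)^m \wed \om^{n-m}$ for a uniform $\eps > 0$. Choose a smooth cutoff $\tilde\rho_i$ on $X$ with $\tilde\rho_i \equiv -1$ on $B_i(s)$, $\tilde\rho_i \equiv 0$ outside $B_i(3s/2)$, and $-1 \leq \tilde\rho_i \leq 0$; for $\al_0 \in (0,1]$ small enough (depending only on $\|dd^c\tilde\rho_i\|_\om$), the function $g := \al_0\tilde\rho_i$ is smooth $\om$-psh, hence in $SH_m(X,\om)$. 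Pick $\eps > 0$ with $\eps(1+N) < \al_0$ and set $f := \eps(u+\rho_i)$ on $B_i(2s)$. Since $\om + dd^c f = (1-\eps)\om + \eps\om_u + \eps\, dd^c\rho_i$ is a sum of forms in $\overline{\Ga_m(\om)}$, $f$ is $(\om,m)$-subharmonic and bounded on $B_i(2s)$. Define
$$
\phi := \begin{cases} \max\{f,g\} & \text{on } B_i(2s),\\ g & \text{on } X\setminus B_i(2s).\end{cases}
$$
Because $g \equiv 0$ on the shell $B_i(2s) \setminus B_i(3s/2)$ and $f \leq 0$ there, $\phi = g$ in a neighborhood of $\d B_i(2s)$, so $\phi$ glues continuously and lies in $SH_m(X,\om) \cap L^\infty(X)$ with $-1 \leq \phi \leq 0$.

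On $B_i(s)$, one has $f \geq -\eps(1+N) > -\al_0 = g$, so $\phi = f$ there. Corollary~\ref{cor:max-prin} then gives $H_m(\phi) \geq H_m(f) \geq \eps^m (dd^c u)^m \wed \om^{n-m}$ on $B_i(s)$, the last inequality by expanding $(\om + \eps\, dd^c u + \eps\, dd^c\rho_i)^m$ multinomially and keeping only the pure $(dd^c u)^m$ term (all other terms being non-negative measures). Taking suprema over admissible $u$ yields $c_m(E \cap B_i(s), B_i(2s)) \leq \eps^{-m} cap_m(E)$, and summing over the finite covering finishes the bound. The main obstacle is exactly this globalization: naively scaling $u \mapsto \eps u$ does not produce an element of $SH_m(X,\om)$, since $dd^c u$ can be unbounded and $\om + \eps\, dd^c u$ may fail to lie in $\overline{\Ga_m(\om)}$. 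The trick is to pair the \emph{local} $\rho_i$ (absorbing $\eps\, dd^c u$ into $\overline{\Ga_m(\om)}$) with the \emph{global} smooth cutoff $\tilde\rho_i$ (whose scaled version is $\om$-psh), and to coordinate $\eps$ and $\al_0$ so that the max-gluing is both globally well-defined and equal to $f$ throughout $B_i(s)$, where Demailly's inequality then recovers the Hessian mass.
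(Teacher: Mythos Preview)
Your proof is correct and follows the same localization/globalization scheme as the argument the paper cites from \cite[Lemma~3.5]{GN18} (taking $\chi=\al=\om$); the paper gives no independent proof, so your detailed execution is precisely what that reference supplies. One minor simplification: on $B_i(s)$ you have $\phi=f$ on an open set, so $H_m(\phi)=H_m(f)$ there by locality of the Hessian operator---invoking Corollary~\ref{cor:max-prin} is unnecessary.
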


\begin{proof} The proof is identical to \cite[Lemma~3.5]{GN18} when we take $\chi = \al = \om$.
\end{proof}

Clearly we can see  from the definition that the capacity depends on the metric $\om$, however, it is a fixed metric. Furthermore, if $\al$ is another Hermitian metric, then we can consider 
\[\notag\label{eq:al-om-cap} \begin{aligned}
&	cap_{\al, m} (E) \\
&	= \sup \left\{\int_{E} (\al + dd^c v)^m \wed \om^{n-m}: v\in SH_m(X,\al, \om), -1\leq v\leq 0 \right\}.
\end{aligned}\]
This capacity is  also comparable with $cap_m'$. In other words,  by increasing $A_0>0$ (if necessary) we have
\[\label{eq:comparing-al-om-cap}
	\frac{1}{A_0} cap_{\al,m} (E) \leq cap_m(E) \leq A_0 cap_{\al,m}(E).
\]
In particular, if $cap_m(E) =0$ if and only if $cap_{\al,m}(E)=0$ for every Hermitian metric $\al$.

Theorem~\ref{thm:cap-int} allows us to obtain a sharp decay estimate for sublevel sets of $m$-subharmonic functions. This property is very useful and well-known for  $\om$-psh functions and $(\om,m)$-sh functions in the K\"ahler setting.

\begin{proof}[Proof of Theorem~\ref{thm:intro-cap}] The first inequality is an immediate consequence of Theorem~\ref{thm:cap-int} as
for a function $\phi \in SH_m(\om)$ and $0\leq \phi \leq 1$,
$$	
	\int_{\{v<-t\}} \om_\phi^m\wed \om^{n-m} \leq \int_X \frac{-v}{t} H_m(\phi) \leq \frac{A}{t}. 
$$
Taking supremum over all such functions $\phi$ we get the desired inequality. The second statement follows easily from the first one by letting $t\to \infty$.
\end{proof}

The following inequality is a direct consequence of \cite[Proposition~3.6]{KN3} as we enlarged the class of function to take supremum. This  in turn generalizes the ones due to Dinew and the first author \cite{DK14, DK17} in the K\"ahler setting.

\begin{lem} \label{lem:vol-cap} Let $1\leq q < n/m$. Then, there exists a uniform constant $A_q>0$ such that for every Borel set $E\subset X$,
$$
	V_{2n} (E) \leq A_q \;cap_m^q(E).
$$
\end{lem}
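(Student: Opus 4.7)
The plan is to deduce this inequality by directly invoking \cite[Proposition~3.6]{KN3} and exploiting the fact that the capacity $cap_m$ defined in the current section is pointwise larger than the one used there. As the paragraph preceding the statement indicates, this is essentially a monotonicity argument with respect to the class of test functions appearing in the supremum.

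First, I would recall the statement of \cite[Proposition~3.6]{KN3}, which already establishes a bound of the form
$$V_{2n}(E) \leq A_q \, \widetilde{cap}_m(E)^q \quad \text{for } 1 \leq q < n/m,$$
where $\widetilde{cap}_m$ is defined by taking the supremum of $\int_E H_m(v)$ over a more restrictive admissible class (for instance, bounded $(\omega,m)$-sh functions arising via the local construction employed in \cite{KN3}, or a continuous subclass for which the Hessian measure was classically available there). Since every such $v$ is in particular a member of $\{v \in SH_m(X,\omega) : -1 \leq v \leq 0\}$, the admissible class used here to define $cap_m$, one immediately obtains
$$\widetilde{cap}_m(E) \leq cap_m(E) \quad \text{for every Borel set } E \subset X.$$
Raising to the $q$-th power and chaining with the bound from \cite{KN3} gives $V_{2n}(E) \leq A_q \, cap_m(E)^q$ with the same constant.

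The only point that genuinely requires attention is compatibility of the Hessian measure $H_m(v)$ appearing on each side: the definition in \eqref{eq:hess-measure}, available now for all bounded $v \in SH_m(X,\omega)$, must agree with the measure used in \cite[Proposition~3.6]{KN3} whenever $v$ lies in both classes. This is exactly what the weak convergence results of Section~\ref{sec:hess-measure} (Propositions~\ref{prop:conv-decreasing} and \ref{prop:conv-increasing}), applied to a smooth decreasing approximation $u_j \downarrow v$ obtained from \cite[Lemma~3.20]{KN3}, guarantee; so no genuine obstacle arises. Thus the argument reduces to quoting \cite[Proposition~3.6]{KN3} and observing the inclusion of admissible classes.
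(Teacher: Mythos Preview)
Your proposal is correct and matches the paper's own treatment: the paper simply states that the lemma is ``a direct consequence of \cite[Proposition~3.6]{KN3} as we enlarged the class of function to take supremum,'' which is precisely the monotonicity argument you outline. Your additional remark on compatibility of the Hessian measures via Proposition~\ref{prop:conv-decreasing} is more detail than the paper provides, but it is in the same spirit and the approaches are essentially identical.
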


As a consequence we get a result which  has  been proven recently by Y. Fang \cite{Fa25a}.

\begin{cor} Let $1\leq q < n/m$. Let $v\in SH_m(\om)$ with $\sup_X v =0$. There exists a constant $A = A(q)>0$ such that
$$
	V_{2n} (v < -t) \leq \frac{A}{t^q} \quad \text{for } t>0.
$$
\end{cor}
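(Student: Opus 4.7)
The plan is to derive this corollary by directly combining the two preceding results: the sharp capacity decay estimate of Theorem~\ref{thm:intro-cap} and the volume-capacity comparison of Lemma~\ref{lem:vol-cap}. There is essentially no new work to be done.

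First I would fix $t > 0$ and set $E_t := \{v < -t\}$, which is a Borel (in fact open up to a null set) subset of $X$. Since $\sup_X v = 0$, Theorem~\ref{thm:intro-cap} yields a uniform constant $A_0 = A_0(\omega, m, n)$ such that
\[
cap_m(E_t) \le \frac{A_0}{t}.
\]

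Next, because $1 \le q < n/m$, Lemma~\ref{lem:vol-cap} provides a constant $A_q > 0$ (depending on $q$, $\omega$, $m$, $n$) such that
\[
V_{2n}(E_t) \le A_q \, cap_m(E_t)^q.
\]
Substituting the previous bound gives
\[
V_{2n}(E_t) \le A_q \left( \frac{A_0}{t} \right)^q = \frac{A_q A_0^q}{t^q},
\]
and setting $A := A_q A_0^q$ yields the claim. Since neither the capacity estimate nor the volume-capacity inequality uses any hypothesis on $v$ beyond $v \in SH_m(\omega)$ and $\sup_X v = 0$, the constant $A$ depends only on $q$ (and the fixed data $\omega, m, n$), as asserted.

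There is no real obstacle here; the statement is a formal consequence of the two results already established in the paper. The only implicit point worth noting is that Lemma~\ref{lem:vol-cap} is stated for arbitrary Borel sets, so measurability of the sublevel set $\{v < -t\}$ (which follows from upper semicontinuity of $v$) is enough to apply it.
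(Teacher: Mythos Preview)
Your proof is correct and follows exactly the paper's approach: apply Lemma~\ref{lem:vol-cap} to the sublevel set and then invoke the capacity decay of Theorem~\ref{thm:intro-cap}. The only cosmetic difference is that the paper compresses the two steps into one line and suppresses the constants.
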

\begin{proof} We have from the above lemma that
$$V_{2n} (v<-t) \leq cap_m(\{v<-t\})^q.$$ Then, the proof follows easily from Theorem~\ref{thm:intro-cap}.
\end{proof}

\begin{prop} \label{prop:cap-conv} Let $\{u_j\}_{j\geq 1} \subset SH_m(\om) \cap L^\infty(X)$ be such that $u_j$ decreasing to $u \in SH_m(\om)\cap L^\infty(X)$. Then, the sequence converges with respect to capacity, i.e., for each $\de>0$,
$$
 	\lim_{j\to \infty} cap_m(\{u_j - u>\de\}) =0.
$$
\end{prop}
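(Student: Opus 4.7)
The plan is to combine a Markov/Chebyshev inequality with Proposition~\ref{prop:cap-gen-est}, whose decisive feature is that it bounds an integral against an arbitrary Hessian measure $H_m(\phi)$ by quantities that depend only on the endpoint functions and not on $\phi$.

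First I would normalize. Using that $\overline{\Ga_m(\om)}$ is a convex cone containing $0$, the shifted and rescaled functions $\tilde u := (u - \sup_X u_1)/M$ and $\tilde u_j := (u_j - \sup_X u_1)/M$ still belong to $SH_m(X,\om)$ for $M := 1 + \sup_j \|u_j - \sup_X u_1\|_{L^\infty}$, because $\om + (1/M) dd^c u_j = (1 - 1/M)\om + (1/M)(\om + dd^c u_j)$ lies in $\overline{\Ga_m(\om)}$. They satisfy $-1 \le \tilde u \le \tilde u_j \le 0$, and the sublevel set $\{u_j - u > \de\}$ coincides with $\{\tilde u_j - \tilde u > \de/M\}$, so I may assume $-1 \le u \le u_j \le 0$ throughout.

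Now fix any admissible test potential $v \in SH_m(X,\om)$ with $-1 \le v \le 0$, and set $q := (n+1)m$. Markov's inequality followed by Proposition~\ref{prop:cap-gen-est} applied with the triple $(u, u_j, v)$ taking the roles of $(v, u, \phi)$ in the statement of that proposition yields
\[
\int_{\{u_j - u > \de\}} H_m(v) \,\le\, \frac{1}{\de^{q+1}} \int_X (u_j - u)^{q+1} H_m(v) \,\le\, \frac{C}{\de^{q+1}} \sum_{s=0}^m \int_X (u_j - u)\, \om_u^s \wed \om^{n-s},
\]
with $C = C(m, n, \om)$. The crucial gain is that the final right-hand side no longer involves $v$, so taking the supremum over $v$ in the definition of $cap_m$ gives the same bound for $cap_m(\{u_j - u > \de\})$. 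Each $\om_u^s \wed \om^{n-s}$ is a finite positive Radon measure on $X$ by the construction of Section~\ref{sec:hess-measure}, the sequence $u_j - u \ge 0$ decreases pointwise to $0$ and is dominated by $u_1 - u \in L^\infty(X)$, so dominated convergence delivers $\int_X (u_j - u)\, \om_u^s \wed \om^{n-s} \to 0$ for every $s$, and the proposition follows.

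The main technical hurdle is the application of Proposition~\ref{prop:cap-gen-est}: as stated there, the functions $u, v, \phi$ are smooth. I would overcome this by approximating $u_j, u$ and $v$ from above by smooth decreasing $(\om,m)$-subharmonic sequences via \cite[Lemma~3.20]{KN3}, applying Proposition~\ref{prop:cap-gen-est} to each smooth triple, and passing to the limit on both sides of the integral inequality using the weak convergence theorem Proposition~\ref{prop:conv-decreasing}; both integrands in the final bound are continuous with respect to monotone limits of the relevant potentials, so the limiting inequality is of exactly the same form.
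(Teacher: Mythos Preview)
Your proof is correct and follows essentially the same route as the paper: normalize to $-1\le u\le u_j\le 0$, apply Markov/Chebyshev together with Proposition~\ref{prop:cap-gen-est} to bound $\int_{\{u_j-u>\de\}}H_m(\phi)$ by a sum $\sum_{s=0}^m\int_X(u_j-u)\,\om_u^s\wedge\om^{n-s}$ independent of the test potential, take the supremum over $\phi$, and conclude by dominated convergence. Your use of the exponent $q+1$ with $q=(n+1)m$ is in fact the precise one dictated by the definition of $e_{(q,m,0)}$, and your final paragraph justifying the passage from smooth to bounded functions via Proposition~\ref{prop:conv-decreasing} makes explicit what the paper handles by the blanket remark at the start of Section~\ref{sec:basic-int}.
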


\begin{proof} Let $-1 \leq \phi \leq 0$. Without loss of generality we may assume that $-1\leq u \leq u_j \leq 0$ by subtracting and dividing the large constants.   Then, we have from Markov's inequality and  Proposition~\ref{prop:cap-gen-est} that for $q = (n+1)m$,
$$\begin{aligned}
	\int_{\{u_j - u>\de\}} H_m(\phi) 
&\leq \frac{1}{\de^q}\int_X (u_j-u)^q H_m(\phi) \\
&\leq \frac{C}{\de^q} \sum_{s=0}^m \int_X (u_j-u) H_s(u),
\end{aligned}$$
where $H_s(u) = (\om + dd^c u)^s \wed \om^{n-s}$. Hence, taking supremum over such $\phi$ we derive 
$$
	cap_m(\{u_j-u>\de\})\leq \frac{C}{\de^q} \sum_{s=0}^m \int_X (u_j-u) H_s(u),
$$
The conclusion follows from  Lebesgue's dominated convergence theorem.
\end{proof}

We can state now one of the most basic properties of $(\om,m)$-sh functions. On compact K\"ahler manifolds it was proved earlier by Lu and Nguyen \cite{chinh-dong}. It generalizes the one for quasi-psh functions to a very general context.

\begin{prop}[quasi-continuity] Let $u \in SH_m(\om)$. For each positive number $\veps>0$ there exists an open set $U$ with $cap_m(U)<\veps$ such that $u$ restricted to $X \setminus U$ is continuous.
\end{prop}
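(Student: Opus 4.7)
The plan is to adapt the standard Bedford--Taylor scheme for quasi-continuity, using the two crucial inputs already established in the paper: convergence in capacity for decreasing sequences of bounded $(\om,m)$-sh functions (Proposition~\ref{prop:cap-conv}) and the sharp sublevel-set decay (Theorem~\ref{thm:intro-cap}). I will also need countable subadditivity of $cap_m$, which is immediate from the definition: for fixed admissible $\phi$, $H_m(\phi)$ is a Radon measure, so $\int_{\bigcup E_k} H_m(\phi) \leq \sum_k \int_{E_k} H_m(\phi)$, and taking the supremum over $\phi$ gives $cap_m(\bigcup E_k) \leq \sum_k cap_m(E_k)$.

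First I would reduce to the bounded case. Set $u_N := \max(u, -N) \in SH_m(\om) \cap L^\infty(X)$. Because $u$ is upper semicontinuous, the set $W_N := \{u < -N\}$ is open, and $u = u_N$ on $X \setminus W_N$. Applying Theorem~\ref{thm:intro-cap} to $u - \sup_X u$ (the theorem is invariant under translation by a constant) yields
$$
  cap_m(W_N) \leq \frac{A}{N},
$$
so I may choose $N$ large enough that $cap_m(W_N) < \veps/2$.

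Next I handle the bounded function $u_N$. By \cite[Lemma~3.20]{KN3} there is a sequence of smooth $(\om,m)$-sh functions $v_j \downarrow u_N$ on $X$. Proposition~\ref{prop:cap-conv} gives $cap_m(\{v_j - u_N > \de\}) \to 0$ for every $\de > 0$, so after passing to a subsequence I may assume
$$
  cap_m\bigl(\{v_j - u_N > 1/j\}\bigr) < \frac{\veps}{2^{j+2}} \qquad \text{for every } j \geq 1.
$$
The key observation is that each set $E_j := \{v_j - u_N > 1/j\}$ is \emph{open}: $v_j$ is continuous while $-u_N$ is lower semicontinuous, so $v_j - u_N$ is lower semicontinuous. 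Define the open set $U' := \bigcup_{j \geq 1} E_j$. By countable subadditivity,
$$
  cap_m(U') \leq \sum_{j \geq 1} \frac{\veps}{2^{j+2}} < \frac{\veps}{2}.
$$
On $X \setminus U'$ one has $0 \leq v_j - u_N \leq 1/j$ for every $j$, i.e.\ $v_j \to u_N$ uniformly on $X \setminus U'$; since each $v_j$ is continuous, $u_N$ is continuous on $X \setminus U'$.

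Finally, setting $U := U' \cup W_N$ produces an open set with $cap_m(U) \leq cap_m(U') + cap_m(W_N) < \veps$, and on $X \setminus U$ we have $u \equiv u_N$ together with the continuity of $u_N$ just established, completing the argument. The only real point of care is ensuring that $U$ is open rather than merely Borel, which is why it is essential to approximate $u_N$ by \emph{continuous} (in fact smooth) functions $v_j$ so that the exceptional sets $E_j$ are automatically open, and similarly to observe that upper semicontinuity of $u$ makes $W_N$ open.
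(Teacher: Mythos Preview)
Your argument is correct and follows essentially the same route as the paper's proof: truncate using the sublevel-set decay (Theorem~\ref{thm:intro-cap}), approximate the bounded truncation by smooth $(\om,m)$-sh functions, and use Proposition~\ref{prop:cap-conv} to get uniform convergence off a small open set. Your write-up is in fact slightly more explicit than the paper's about why the exceptional sets $W_N$ and $E_j$ are open and about countable subadditivity of $cap_m$, which are exactly the points needed to guarantee that $U$ is open rather than merely Borel.
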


\begin{proof} By subtracting a constant we may assume $u\leq 0$ on $X$.  It follows from Theorem~\ref{thm:intro-cap} that for $M>0$ large enough,
$$
	cap_m (\{u<-M\}) \leq \veps/2.
$$
Denote $v = \max\{u, -M\}$ and $U_0:= \{u<-M\}$. By \cite[Lemma~3.2]{KN3} there exists a sequence $\{v_j\}_{j\geq 1} \subset SH_m (\om) \cap C^\infty(X)$ that $v_j \downarrow v$. Proposition~\ref{prop:cap-conv} implies that  this sequence converges with respect to capacity. Thus, for each integer $k\geq 1$, there exists $j(k)$ such that the open set $U_{k} = \{v_{j(k)} > v +1/k\}$ satisfying
$$
	cap_m(U_k) \leq \veps/2^{k+1}.
$$
Then, $U =  U_0\cup \bigcup_{k\geq 1} U_k$ has capacity $cap_m(U) <\veps$ and  $v_{j(k)}$ converges uniformly to $\wt v =v$ on $X\setminus U$. Hence the restriction of $v$ to $X\setminus U$ is continuous.
\end{proof}

\begin{remark} The quasi-continuity can be obtained from the corresponding result in the local setting \cite[Theorem~4.9]{KN23} and Lemma~\ref{lem:cap-equiv}. However, the above proof could be useful if we considered the degenerate background metric as in \cite{GLu25}.
\end{remark}

\section{Characterization of polar sets}
\label{sec:polar}

In this section we will prove the characterization in Theorem~\ref{thm:intro-polar}. We define the $m$-polarity  locally as follows.

\begin{defn} A Borel set $E\subset X$ is called locally $m$-polar  if for each point $x\in E$, there exists a neighborhood $\Om \subset X$ and a $m$-sh function with respect to $\om$ (or $m-\om$-sh function) such that $u\neq -\infty$ and 
$ E \cap \Om \subset \{u = -\infty\}$.
\end{defn}

The space of $(\om,m)$-sh functions in a coordinate patch was defined in Section~\ref{ssec:om-m}. This space  was studied in more detail in \cite[Section~2, Section~9]{GN18}.
It follows from \cite[Proposition~7.7]{KN25} that 
a Borel set $E\subset X$ is a locally $m$-polar if and only if its outer local $m$-capacity (defined in \eqref{eq:local-cap})  $c_m^*(E \cap \Om,\Om) = 0$  on each coordinate patch $\Om \subset \subset X$.
Hence, we get immediately
\begin{lem} Let  $E\subset X$ be a subset. Then, $E$ is a locally $m$-polar set $\Leftrightarrow$
$cap_m'^*(E) =0 \Leftrightarrow cap_m^*(E) =0$.
\end{lem}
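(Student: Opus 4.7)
The plan is to chain together three equivalences, each of which is almost a tautology given results already available in the excerpt.

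First, I would observe that the equivalence $cap_m^*(E)=0 \Leftrightarrow cap_m'^*(E)=0$ is immediate from Lemma~\ref{lem:cap-equiv}: the two-sided bound $A_0^{-1} cap_m'(U) \leq cap_m(U) \leq A_0 cap_m'(U)$ holds for every Borel set and, in particular, every open set. Taking infima over open neighborhoods of $E$ preserves the bound, so one side vanishes iff the other does.

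Second, since $cap_m'(U) = \sum_{i\in I} c_m(U \cap B_i(s), B_i(2s))$ is a \emph{finite} sum of nonnegative quantities and the covering is finite, the outer capacity $cap_m'^*(E)$ vanishes iff $c_m^*(E \cap B_i(s), B_i(2s)) = 0$ for every $i\in I$. (Formally: monotonicity gives the $\geq$ direction; for $\leq$, given $\eps>0$, pick for each $i$ an open $U_i \supset E \cap B_i(s)$ inside $B_i(2s)$ with $c_m(U_i, B_i(2s)) < \eps/|I|$, and take $U = \bigcup_i U_i$.)

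Third, I invoke \cite[Proposition~7.7]{KN25} as cited in the text immediately preceding the lemma: $E$ is locally $m$-polar iff $c_m^*(E \cap \Om, \Om) = 0$ on every coordinate patch $\Om \subset\subset X$. For the forward direction (locally $m$-polar $\Rightarrow cap_m'^*(E) = 0$), apply this with $\Om = B_i(2s)$ and use monotonicity $c_m^*(E \cap B_i(s), B_i(2s)) \leq c_m^*(E \cap B_i(2s), B_i(2s)) = 0$. For the converse, assume each $c_m^*(E \cap B_i(s), B_i(2s)) = 0$; given any $x \in E$, choose $i$ with $x \in B_i(s)$, and take $\Om = B_i(s)$. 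The outer capacity zero on $B_i(2s)$ furnishes an $m$-$\om$-sh function on $B_i(2s)$, not identically $-\infty$, that equals $-\infty$ on $E \cap B_i(s) = E \cap \Om$; restricting to $\Om$ verifies local $m$-polarity at $x$.

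I do not anticipate any real obstacle: the heavy analytic input — the comparability of the two global capacities (Lemma~\ref{lem:cap-equiv}) and the local characterization of polar sets via outer local capacity (from \cite{KN25}) — has already been established. The lemma is essentially a bookkeeping statement tying these together; the only mild care needed is in showing that outer capacity for the finite-sum quantity $cap_m'$ distributes over the sum, which is a standard $\eps/|I|$ argument.
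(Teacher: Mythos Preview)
Your approach matches the paper's, which gives no explicit argument and simply declares the lemma an immediate consequence of \cite[Proposition~7.7]{KN25} and Lemma~\ref{lem:cap-equiv}. Your first and third steps are fine. The $\eps/|I|$ construction in your second step, however, does not work as written: with $U=\bigcup_i U_i$ one has $cap_m'(U)=\sum_j c_m(U\cap B_j(s),B_j(2s))$, and each $U\cap B_j(s)$ contains cross pieces $U_i\cap B_j(s)$ for $i\ne j$ whose capacity you have bounded only relative to $B_i(2s)$, not relative to $B_j(2s)$. So the sum need not be below $\eps$, and the inequality $cap_m'^*(E)\le\sum_i c_m^*(E\cap B_i(s),B_i(2s))$ does not follow from your construction.

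The repair is easy and uses what you already extracted in Step~3. Since $E$ is locally $m$-polar, \cite[Proposition~7.7]{KN25} applied with $\Om=B_j(2s)$ gives $c_m^*(E\cap B_j(2s),B_j(2s))=0$ for every $j$. Given $\eps>0$, choose open $W_j\subset B_j(2s)$ with $W_j\supset E\cap B_j(2s)$ and $c_m(W_j,B_j(2s))<\eps/|I|$, and set
$$U=\bigcap_{j\in I}\bigl(W_j\cup (X\setminus\overline{B_j(s)})\bigr).$$
This $U$ is open, contains $E$ (if $x\in E$ and $x\in\overline{B_j(s)}\subset B_j(2s)$ then $x\in E\cap B_j(2s)\subset W_j$), and satisfies $U\cap B_j(s)\subset W_j$ for each $j$; hence $cap_m'(U)<\eps$. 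With this adjustment the rest of your argument goes through unchanged.
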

Here the outer capacity $cap_m^*$ is given by
$$
	cap_m^* (E) = \inf \{ cap_m (U) : E \subset U, \,U \text{ is open in } X \}
$$
and $cap_m'^*(E)$ is defined similarly.
Notice that $cap_m(E)$ is an inner regular, i.e.,
$$
	cap_m(E) = \sup \{ cap_m (K) : K \subset E, \, K \text{ is compact}\}.
$$

Assume $E$ is  globally $m$-polar and $u \in SH_m(X,\om)$ satisfies
\[\label{eq:global-polar}
	E \subset \{u = -\infty\}.
\]
If we take a strictly psh function $\rho$ in a local coordinate ball $B$ such that $dd^c \rho \geq \om$ in $B$, then $u+\rho$ is a $m-\om$-sh function in $B$ and $E \cap B \subset \{u+\rho = -\infty\}$. Hence, a globally $m$-polar set is  locally $m$-polar.  We will see later that the reverse inclusion is also true. A nice consequence is that there are plenty of unbounded $(\om, m)$-subharmonic functions on a compact Hermitian manifold. 

The global relative $m$-subharmonic extremal function is given by
$$
	h_E (z) = \sup\{ v(z) : v\in SH_m(X,\om), v\leq 0, v \leq -1 \text{ on } E \}.
$$
Here we write $h_{E}$ instead of $h_{m,E}$ as $m$ is already fixed. The function $h_E$  shares several properties with its counterpart in global pluripotential theory on compact K\"ahler manifolds.
The Choquet lemma shows  that there is  an increasing sequence of $v_j \in SH_m(\om)$, $-1\leq v_j \leq 0$ converging almost everywhere to the upper semi-continuous regularization $h_E^*$. Therefore,
$h_E^* \equiv 0$ if and only if there exists an increasing sequence of $(\om,m)$-sh function $\{v_j\}_{j\geq 1}$ such that 
\[\label{eq:seq-polar}
	v_j \leq 0,\quad v_j \leq -1 \text{ on } E, \quad \int_X |v_j| \om^n \leq \frac{1}{2^j}.
\]

By classical arguments in pluripotential theory \cite[Proposition~1.19]{K05} combined with \cite[Lemma~7.2]{KN25} and Proposition~\ref{prop:lift} we have

\begin{prop} \label{prop:relative-ext-basic} The following properties hold.
\begin{itemize}
\item[(a)] $h_E^* \in SH_m(X, \om)$ and $-1\leq h_E^*\leq 0$.
\item[(b)] $h_E^* =-1$ on $E \setminus P$ where $P$ is a globally $m$-polar set.
\item[(c)] Let $K_1 \supset K_2 \cdots $ be a sequence of compact sets in $\Om$ and $K= \cap_j K_j$. Then, $h_{K_j}^*$ increases almost everywhere to $h_K^*$. 
\item[(d)] If $h_{E_j}^* \equiv 0$ and $E = \cup_{j=1}^\infty E_j$, then $h_E^* \equiv 0$.
\item[(e)]  Let $E \subset X$. Then, $H_m(h_E^*) \equiv 0$ on the open set $\{h_E^*<0\}\setminus \ov{E}$. 
\end{itemize}
\end{prop}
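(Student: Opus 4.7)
My plan is to address the five items in the order (a), (e), (c), (b), (d), which respects the logical dependencies. For (a), I would invoke Choquet's lemma to extract a countable increasing subfamily $\{v_j\}$ from the defining family so that $(\lim_j v_j)^* = h_E^*$ off a negligible set; the closedness of $SH_m(X,\om)$ under such a.e.-equal increasing limits, rooted in the weak convergence results of Section~\ref{sec:hess-measure}, places $h_E^* \in SH_m(X,\om)$, and the bounds $-1 \le h_E^* \le 0$ are inherited directly from the defining family. For (e), I would use balayage: on a coordinate ball $B \Subset \{h_E^* < 0\} \setminus \ov{E}$, apply Proposition~\ref{prop:lift} to each $v_j$ in the Choquet sequence for $h_E$ to produce $\wh{v}_j \ge v_j$ with $\wh{v}_j \equiv v_j$ outside $B$ and $H_m(\wh{v}_j) \equiv 0$ on $B$; since $B \cap \ov{E} = \emptyset$, each $\wh{v}_j$ remains a competitor, hence $\wh{v}_j \le h_E^*$, so $(\lim_j \wh{v}_j)^* = h_E^*$, and Proposition~\ref{prop:conv-increasing} combined with $H_m(\wh{v}_j) \equiv 0$ on $B$ yields $H_m(h_E^*) \equiv 0$ there. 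For (c), the inequality $\psi := (\lim_j h_{K_j}^*)^* \le h_K^*$ is immediate from $K \subset K_j$; conversely, any competitor $v$ for $h_K$ satisfies $v < -1 + \veps$ on $K_j$ for $j$ large (since $\{v < -1+\veps\}$ is open, contains the compact set $K = \bigcap_j K_j$, and hence contains $K_j$ for $j$ sufficiently large), so $v - \veps$ is a competitor for $h_{K_j}$; letting $\veps \to 0$ and taking the supremum over $v$ gives $h_K^* \le \psi$.

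The main obstacle will be (b). On $E$ each competitor satisfies $v \le -1$, so $h_E \equiv -1$ on $E$, and $E \cap \{h_E^* > -1\}$ coincides with the classical negligible set $N := \{h_E^* > h_E\}$. Locally, on a coordinate ball $B$ with a strictly psh weight $\rho$ satisfying $dd^c \rho \ge \om$, the functions $v_j + \rho$ from the Choquet sequence are $m$-$\om$-subharmonic, and \cite[Lemma~7.2]{KN25} shows that $N \cap B$ is locally $m$-polar: there exists an $m$-$\om$-sh $u_B$ on $B$ with $N \cap B \subset \{u_B = -\infty\}$. To globalize, I would cover $X$ by finitely many such balls $B_i$, modify each $u_i$ by subtracting appropriate multiples of $\rho_i$ and of a global $\om$-psh bump to obtain $\psi_i \in SH_m(X,\om)$ with $N \cap B_i \subset \{\psi_i = -\infty\}$, and set $u := \max_i \psi_i$ after aligning normalizations on overlaps. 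The delicate point is ensuring that each $\psi_i$ genuinely lies in $SH_m(X,\om)$ globally after the cutoff, and that the resulting $u$ still satisfies $u = -\infty$ throughout $N$.

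Finally, (d) follows from (b) combined with Theorem~\ref{thm:cap-int}. If each $h_{E_j}^* \equiv 0$, then by (b) the set $E_j$ is contained in a globally $m$-polar set, so there exists $u_j \in SH_m(X,\om)$ with $\sup_X u_j = 0$ and $E_j \subset \{u_j = -\infty\}$. Theorem~\ref{thm:cap-int} bounds $\int_X |u_j|\,\om^n \le C_0$ uniformly, so the partial sums $S_N := \sum_{j=1}^N 2^{-j} u_j$ lie in $SH_m(X,\om)$ (by convexity of the $m$-positive cone), decrease in $N$, and converge in $L^1(\om^n)$ to a function $u \in SH_m(X,\om)$ with $u \equiv -\infty$ on $E = \bigcup_j E_j$. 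The competitors $v_k := \max\{u/k, -1\}$ for $h_E$ then converge to $0$ off the polar set $\{u = -\infty\}$, forcing $h_E^* \equiv 0$.
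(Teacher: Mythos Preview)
Your treatment of (a), (c), and (e) is correct and matches the paper's approach: Choquet's lemma plus the closedness of $SH_m(X,\om)$ under a.e.\ increasing limits for (a), the standard compactness argument for (c), and balayage via Proposition~\ref{prop:lift} together with Proposition~\ref{prop:conv-increasing} for (e). These are exactly the ``classical arguments'' alluded to.

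The gap is in (b), and it propagates to (d). Your reduction of (b) to the statement that the negligible set $N=\{h_E^*>h_E\}$ is locally $m$-polar (via \cite[Lemma~7.2]{KN25} after adding a local strictly psh potential) is fine. The globalization, however, fails as written. First, $u:=\max_i\psi_i$ is $-\infty$ only where \emph{every} $\psi_i$ is $-\infty$, so it is $-\infty$ on $\bigcap_i\{\psi_i=-\infty\}$, not on $N=\bigcup_i(N\cap B_i)$; you would need a convex combination $\sum_i c_i\psi_i$ instead. Second, and more seriously, producing each global $\psi_i\in SH_m(X,\om)$ from the merely local $u_{B_i}$ while \emph{retaining} the pole set $N\cap B_i$ is not achievable by a max-type gluing: any $\max\{u_{B_i}-C,\,Ag_i\}$ with $g_i$ a global function with an isolated logarithmic pole will fill in the $-\infty$ locus of $u_{B_i}$ away from the pole of $g_i$. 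This extension problem is precisely the Josefson-type implication (locally $m$-polar $\Rightarrow$ globally $m$-polar), which is the content of (d)$\Rightarrow$(a) in Theorem~\ref{thm:intro-polar} and is proved there by the rescaled-metric trick $\om_j=\om/2^j$, \emph{not} by local gluing.

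The paper's logical order avoids this: only (a) and (e) feed into Lemma~6.4 and Corollary~\ref{cor:relative-ext-polar}, and the proof of Theorem~\ref{thm:intro-polar} never invokes (b). So one can first establish (a), (e), then Corollary~\ref{cor:relative-ext-polar}, then Theorem~\ref{thm:intro-polar}; the word ``globally'' in (b) is then justified a posteriori, and (d) follows cleanly from countable subadditivity of $cap_m^*$ combined with Corollary~\ref{cor:relative-ext-polar}. Your route for (d) through (b) is valid in principle, but only once (b) is secured by that later machinery.
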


The following results play the role of the capacity "formula" in the K\"ahler setting. This is the analogue of \cite[Lemma~7.5]{KN25} on compact manifolds.

\begin{lem} Let $E\subset X$ be a Borel set. 
\begin{itemize}
\item
[(a)] We have $$
	\int_X (-h_E^*) (\om + dd^c h_E^*)^m \wed \om^{n-m} \leq cap_m^*(E).
$$
\item
[(b)]  There exists a uniform constant $A = A(n,m,\om)$ such that
$$
	cap_m^* (E) \leq A \sum_{s=0}^m \int_X (-h_E^*) (\om + dd^c h_E^*)^s \wed \om^{n-s}.
$$
\end{itemize}
\end{lem}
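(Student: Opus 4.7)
My plan is to prove both inequalities as the compact Hermitian analogue of \cite[Lemma~7.5]{KN25}, relying on the structural properties of $h_E^*$ from Proposition~\ref{prop:relative-ext-basic} together with the integral inequality of Proposition~\ref{prop:cap-gen-est}.

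For (a), I would first exploit Proposition~\ref{prop:relative-ext-basic}(e): $H_m(h_E^*)=0$ on $\{h_E^*<0\}\setminus\ov E$, and trivially $(-h_E^*)=0$ on $\{h_E^*=0\}$, so
\[
\int_X (-h_E^*)\,H_m(h_E^*) \;=\; \int_{\ov E\cap\{h_E^*<0\}}(-h_E^*)\,H_m(h_E^*) \;\leq\; \int_{\ov E} H_m(h_E^*).
\]
The remaining task is to bound $\int_{\ov E} H_m(h_E^*)\leq cap_m^*(E)$. My strategy is to approximate from outside by a decreasing sequence of open sets $U_j\supset E$ with $cap_m(U_j)\downarrow cap_m^*(E)$ and consider $g_j=h_{U_j}^*$. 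Since the constraint weakens as $U_j$ shrinks, $\{g_j\}$ is increasing and its a.e.\ limit $g$ satisfies $g\leq h_E^*$ and equals $-1$ a.e.\ on $E$ (using Proposition~\ref{prop:relative-ext-basic}(b) and the $cap_m^*$-nullity of polar sets from Theorem~\ref{thm:intro-cap}); together with Proposition~\ref{prop:relative-ext-basic}(c)--(d) this forces $g=h_E^*$ a.e. Proposition~\ref{prop:conv-increasing} then gives $H_m(g_j)\to H_m(h_E^*)$ weakly. Since $g_j\equiv -1$ on $U_j$, the mass $\int_X(-g_j)H_m(g_j)$ is essentially $\int_{U_j}H_m(g_j)\leq cap_m(U_j)$, up to a boundary contribution on $\partial U_j$ that is absorbed by choosing the $U_j$ with small capacity boundary. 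Passing to the limit yields $\int_X(-h_E^*)H_m(h_E^*)\leq\lim_j cap_m(U_j)=cap_m^*(E)$.

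For (b), I would apply Proposition~\ref{prop:cap-gen-est} with $u\equiv 0$, $v=h_E^*$ (so that $h=u-v=-h_E^*\in[0,1]$), and $\phi\in SH_m(\om)$ satisfying $-1\leq\phi\leq 0$. For $q=(n+1)m$ the proposition gives
\[
\int_X (-h_E^*)^{q+1}\,H_m(\phi) \;\leq\; C\sum_{s=0}^m\int_X (-h_E^*)\,H_s(h_E^*),
\]
with $H_s(h_E^*)=(\om+dd^c h_E^*)^s\wed\om^{n-s}$. By Proposition~\ref{prop:relative-ext-basic}(b) and Theorem~\ref{thm:intro-cap}, $h_E^*=-1$ on $E\setminus P$ for a globally $m$-polar set $P$ with $cap_m^*(P)=0$. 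Hence $(-h_E^*)^{q+1}\equiv 1$ on $E\setminus P$, giving
\[
\int_{E\setminus P}H_m(\phi) \;=\; \int_{E\setminus P}(-h_E^*)^{q+1}\,H_m(\phi) \;\leq\; C\sum_{s=0}^m\int_X (-h_E^*)\,H_s(h_E^*).
\]
Taking the supremum over admissible $\phi$ bounds $cap_m(E\setminus P)$ by the right side; countable subadditivity of $cap_m^*$ together with $cap_m^*(P)=0$ then promotes this to the same bound for $cap_m^*(E)$, after an outer-regularity step parallel to that of (a), i.e., applying the argument with $h_{U_j}^*$ in place of $h_E^*$ and passing to the limit via $h_{U_j}^*\uparrow h_E^*$ and Proposition~\ref{prop:conv-increasing}.

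The main obstacle shared by both parts is the outer-regularity transfer from $cap_m$ of open neighborhoods to $cap_m^*(E)$, which requires careful control of the boundary mass of $H_m(h_{U_j}^*)$ on $\partial U_j$ and the a.e.\ convergence $h_{U_j}^*\uparrow h_E^*$ as $U_j\downarrow E$; the latter implicitly depends on the Josefson-type equivalence between local and global polarity, which is itself a main result of the paper. Beyond this technical step, (a) is an efficient use of the vanishing property~(e) plus the trivial bound $-h_E^*\leq 1$, and (b) is essentially a direct insertion of the natural choice $u=0$, $v=h_E^*$ into the capacity estimate Proposition~\ref{prop:cap-gen-est}.
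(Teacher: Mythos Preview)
Your overall strategy for both parts is on the right track and matches the paper's, but there is a genuine technical gap in the approximation step that the paper handles differently.

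For (a), the paper proceeds in three stages: compact sets, then open sets, then general Borel sets. For compact $K$ one uses Proposition~\ref{prop:relative-ext-basic}(e) directly to get $\int_X(-h_K^*)H_m(h_K^*)\le\int_K H_m(h_K^*)\le cap_m(K)$. For open $G$ one exhausts by compacts $K_j\uparrow G$, so $h_{K_j}^*\downarrow h_G=h_G^*$, and applies the decreasing-convergence theorem (Proposition~\ref{prop:conv-decreasing}). For general Borel $E$ one takes decreasing open $O_j\supset E$ with $cap_m(O_j)\downarrow cap_m^*(E)$, but---and this is the device you are missing---one does \emph{not} simply work with $h_{O_j}^*$. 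Instead one invokes Choquet's lemma to produce competitors $v_j\uparrow h_E^*$ a.e.\ with $v_j\le -1$ on $E$, and then sets $G_j:=O_j\cap\{v_j<-1+1/j\}$. This yields the sandwich $v_j-1/j\le h_{G_j}\le h_E$, which forces $h_{G_j}\uparrow h_E^*$ a.e.\ by squeezing, with no appeal to Josefson-type results or polar-set removability.

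Your claim that $h_{U_j}^*\uparrow h_E^*$ for an \emph{arbitrary} decreasing family of open neighbourhoods with $cap_m(U_j)\to cap_m^*(E)$ is not justified: the inequality $\lim h_{U_j}^*\le h_E^*$ is free, but your route to the reverse inequality via Proposition~\ref{prop:relative-ext-basic}(b),(c),(d) does not work---item (c) concerns decreasing \emph{compact} sets, item (d) concerns unions with $h^*\equiv 0$, and as you yourself flag, invoking Josefson here would be circular since that equivalence is proved \emph{using} this lemma. The Choquet construction sidesteps all of this. It also dissolves your ``boundary contribution on $\partial U_j$'' issue: once the open-set inequality is obtained via inner compact exhaustion, one has $\int_X(-h_{G_j})H_m(h_{G_j})\le cap_m(G_j)$ exactly, with nothing to absorb.

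For (b) your plan---insert $u=0$, $v=h_E^*$ into Proposition~\ref{prop:cap-gen-est} and take the supremum over $\phi$---is what the paper intends (it defers to the local analogue \cite[Lemma~7.5]{KN25}). The passage from $cap_m$ to $cap_m^*$ again requires the same Choquet-based open sets $G_j$ rather than arbitrary outer approximants; with that in place, increasing convergence (Proposition~\ref{prop:conv-increasing}) finishes the job.
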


\begin{proof} Let us prove the property (a). We have $-1\leq h_E^* \leq 0$ by Proposition~\ref{prop:relative-ext-basic}-(a). Assume first $E= \bar E$ is a compact set. It follows Proposition~\ref{prop:relative-ext-basic}-(e) that
$$
	\int_X (-h_E^*) H_m(h_E^*) \leq  \int_{E} H_m(h_E^*)  \leq cap_m(E).
$$
Next, assume $E = G$ is an open set. Let $\{K_j\}_{j\geq1}$ be an exhaustive sequence of compact sets which increases to $G$. As $G= \cup_{j\geq 1} K_j $ it is easy to see that $h^*_{K_j} \downarrow h_G = h_G^*$. The weak convergence theorem for decreasing sequences (Proposition~\ref{prop:conv-decreasing}) implies that $(-h_{K_j}^*) H_m(h_{K_j^*})$ converges weakly to $(-h_G) H_m(h_G)$ on $X$. Hence,
$$\begin{aligned}
	\int_X (-h_G) H_m(h_G) 
	&= \lim_{j\to \infty} \int_X (-h_{K_j}^*) H_m(h_{K_j^*}) \\ 
	&\leq \lim_{j\to\infty} cap_m(K_j) \\
	&= cap_m(G).
\end{aligned}$$

Finally, let $E$ be a general Borel set. By the definition of outer capacity, there exists a sequence of open sets $O_j \supset E$ such that $cap_m^* (E) = \lim_j cap_m(O_j)$. Replacing $O_j$ by $\cap_{1\leq s\leq j}O_s$ we may assume that $\{O_j\}$ is decreasing. Using Choquet's lemma we can find an increasing sequence of $(\om,m)$-sh functions $v_j \leq 0$ such that $v_j =-1$ on $E$ and $\lim_j v_j = h_E^*$ almost everywhere on $X$. Denote $G_j = O_j \cap \{v_j < -1 +1/j\}$. Then, $E\subset G_j \subset O_j$ and 
$$
	v_j -1/j \leq h_{G_j} \leq h_E.
$$
Therefore, $cap_m^*(E) = \lim_{j} cap_m(G_j)$ and $h_{G_j}$ increases to $h_E^*$ almost everywhere on $X$. Using the convergence theorem for increasing sequences (Proposition~\ref{prop:conv-increasing}) we get $(-h_{G_j}H_m(h_{G_j})) \to (- h_E^*) H_m(h_E^*)$ weakly on $X$ and thus
$$\begin{aligned}
	\int_X (-h_E^*) H_m(h_E^*) 
&= \lim_{j\to\infty} \int_X (-h_{G_j})H_m(h_{G_j}) \\
&\leq \lim_{j\to \infty} cap_m(G_j) \\
&= cap_m^*(E).
\end{aligned} $$ 
This completed the proof of (a).

The proof of (b) follows the  lines of the  one in \cite[Lemma~7.5]{KN25} provided  Proposition~\ref{prop:cap-gen-est} is  at our disposal.
\end{proof}

\begin{cor} \label{cor:relative-ext-polar} For $E\subset X$ a Borel set, $h_E^* \equiv 0$ if and only if $cap_m^*(E) =0$.
\end{cor}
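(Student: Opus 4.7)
The forward direction is immediate from part (b) of the preceding lemma: if $h_E^* \equiv 0$ then $-h_E^* \equiv 0$, so every integral on the right-hand side of (b) vanishes, forcing $cap_m^*(E) = 0$.

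For the reverse direction, my plan is to use part (a) to locate where the Hessian measure $H_m(h_E^*)$ is allowed to sit, and then invoke the weak comparison principle (Theorem~\ref{thm:weak-cp}) to upgrade this information into the pointwise statement $h_E^* \equiv 0$. Suppose $cap_m^*(E) = 0$. Part (a) gives $\int_X (-h_E^*)\,H_m(h_E^*) = 0$, and since $-h_E^* \geq 0$ on $X$ by Proposition~\ref{prop:relative-ext-basic}-(a), the set $\Om := \{h_E^* < 0\}$ -- which is open because $h_E^*$ is upper semi-continuous -- satisfies $H_m(h_E^*)(\Om) = 0$.

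Now I would argue by contradiction. Assume $\Om \neq \emptyset$, so that $s_{\min} := \inf_X h_E^* < 0$. Applying Theorem~\ref{thm:weak-cp} with $\vphi = h_E^*$ and $\psi \equiv 0$, for any $\eps \in (0,1)$ and any $t$ with $0 < t < \min\{-s_{\min},\,\eps^3/(16\Bb)\}$ one obtains
$$\int_{U(\eps,t)} \om^n \;\leq\; \Bigl(1 + \tfrac{Ct}{\eps^m}\Bigr) \int_{U(\eps,t)} H_m(h_E^*),$$
where $U(\eps,t) = \{h_E^* < s_{\min}+t\} \subset \Om$. The right-hand side vanishes, so $U(\eps,t)$ must have Lebesgue measure zero; but $U(\eps,t)$ is a non-empty open subset of $X$ (non-empty by the very definition of the infimum $s_{\min}$), and thus has positive volume -- a contradiction. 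Hence $\Om = \emptyset$, i.e. $h_E^* \geq 0$, and combined with $h_E^* \leq 0$ we conclude $h_E^* \equiv 0$.

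The main obstacle is this reverse implication. Part (a) by itself only constrains $h_E^*$ on the (a priori very thin) support of $H_m(h_E^*)$, and in the Hermitian setting one does not have a K\"ahler-type total-mass identity $\int_X H_m(h_E^*) = \int_X \om^n$ to inflate that information into a global statement. The weak comparison principle, which quantitatively compares $\om^n$ to $H_m(h_E^*)$ on small sublevel sets of $h_E^*$ with a controlled multiplicative error, is exactly the tool that bridges this gap.
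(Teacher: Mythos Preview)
Your proof is correct and follows essentially the same route as the paper's: the forward direction via part (b) of the lemma, and the reverse direction by combining part (a) with the weak comparison principle applied to $\vphi = h_E^*$, $\psi = 0$ on the sublevel set $\{h_E^* < s_{\min} + t\}$. The only cosmetic difference is that you first isolate the intermediate fact $H_m(h_E^*)\bigl(\{h_E^*<0\}\bigr)=0$ and then use $U(\eps,t)\subset\{h_E^*<0\}$, whereas the paper bounds $\int_{U(t)} H_m(h_E^*)$ directly by $\tfrac{2}{|s_{\min}|}\int_X(-h_E^*)\,H_m(h_E^*)$ via the pointwise inequality $-h_E^* > |s_{\min}|/2$ on $U(t)$; the substance is identical.
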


\begin{proof}  If $h_E^* \equiv 0$, then $cap_m^*(E) =0$ by the second inequality of the above lemma. Conversely, assume $cap_m^*(E) =0$. 
We shall prove $h_E^*\equiv 0$ by a contradiction argument. Assume $h_E^*$ is not identically equal to zero. Then $S_0:=\inf_{X} h_E^* <0$. We are going to apply the weak comparison principle (Theorem~\ref{thm:weak-cp}) for $\vphi = h_E^*$, $\psi =0$ and $\veps =1/2$, where $U(t) = \{h^*_E < S_0+ t\}$ is open and non-empty for $0< t < \min\{|S_0|/2, 1/32\Bb\}$. 
Then,
$$\begin{aligned}
	\int_{U(t)} \om^n 
& \leq C \int_{U(t)} (\om + dd^c h_E^*)^m \wed \om^{n-m} \\
& \leq   \frac{2C}{ |S_0|} \int_{X} -h_E^*(\om + dd^c h_E^*)^m \wed \om^{n-m} \\
&=0.
\end{aligned}$$
This leads to a contradiction as $U(t)$ is an open set whose volume is positive.
\end{proof}

We also consider the globally $m$-subharmonic extremal function modeled on the Siciak-Zaharjuta extremal function on compact manifolds, studied earlier in global pluripotential theory by Guedj and Zeriahi \cite[Theorem~9.17]{GZ-book}:
$$
	V_E(z) = \sup\{ v(z) : v\in SH_m(X,\om), v\leq 0 \text{ on } E \}.
$$
We have written $V_{E}$ instead of $V_{m,E}$ as $m$ is fixed to simplify the notations. The proof in the general case $1\leq m\leq n$ is very similar to the one for $m=n$ once the corresponding results  are supplied.

\begin{lem}\label{lem:global-ext-polar} Let $E \subset X$ be a Borel set. Let $V_E^*$ be the upper semi-continuous regularization of $V_E$.
\begin{itemize}
\item
[(a)] $E$ is a  globally $m$-polar set if and only if $\sup_X V_E^*=+\infty$, which is also equivalent to $V_E^* \equiv +\infty$.
\item
[(b)] If $E$ is not a globally $m$-polar set, then $V_E^* \in SH_m(\om) \cap L^\infty(X)$ and it satisfies $V_E^* \equiv 0$ in the interior of $E$, $(\om + dd^c V_E^*)^m \wed \om^{n-m} = 0$ in $X\setminus \bar E$ and
$$
	\int_{\bar E} H_m(V_E^*) = \int_X H_m(V_E^*).
$$
\end{itemize}
\end{lem}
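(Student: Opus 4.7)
The plan follows the standard strategy from global pluripotential theory on compact K\"ahler manifolds \cite[Theorem~9.17]{GZ-book}, adapted using the tools developed above: Theorem~\ref{thm:cap-int} for uniform integrability, the convergence theorems of Section~\ref{sec:hess-measure}, and the balayage Proposition~\ref{prop:lift}.

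For the forward implication in (a), if $E\subset\{u=-\infty\}$ with $u\in SH_m(\om)$ and $\sup_X u=0$, then for each $M>0$ the function $u+M$ lies in $SH_m(\om)$ and equals $-\infty$ on $E$, so it is admissible in the definition of $V_E$; letting $M\to\infty$ yields $V_E\equiv+\infty$ outside the measure-zero set $\{u=-\infty\}$, hence $V_E^*\equiv+\infty$. For the converse, assuming $\sup_X V_E^*=+\infty$, I would pick $v_j\in SH_m(\om)$ with $v_j\leq 0$ on $E$ and $\sup_X v_j\geq 2^j$, set $w_j:=v_j-\sup_X v_j$, and form $u:=\sum_{j\geq 1}2^{-j}w_j$. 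Each partial sum is $(\om,m)$-sh since its curvature form $2^{-N}\om+\sum_{j\leq N}2^{-j}\om_{w_j}$ is a positive combination with weights summing to one of forms in $\bar\Ga_m(\om)$. Theorem~\ref{thm:cap-int} applied with the bounded function $0$ gives $\int_X(-w_j)\om^n\leq C_0$ uniformly in $j$, so $u\in L^1(X)$; the decreasing limit of the partial sums is thus not identically $-\infty$ and belongs to $SH_m(\om)$. Since $2^{-j}w_j\leq -1$ on $E$ for every $j$, summing gives $u\equiv-\infty$ on $E$, and $E$ is globally $m$-polar.

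For part (b), suppose $E$ is not globally $m$-polar, so by (a) we have $\sup_X V_E^*<\infty$. Upper semi-continuity on the compact $X$ gives $V_E^*$ bounded above, and $V_E\geq 0$ (since $0$ is a competitor) implies $V_E^*\geq 0$; that $V_E^*\in SH_m(\om)$ follows from the local theory applied to the increasing envelopes of a Choquet subsequence, so $V_E^*\in SH_m(\om)\cap L^\infty(X)$. On the interior of $E$, every admissible $v$ satisfies $v\leq 0$ locally, hence $V_E\equiv 0$, and the interior property together with upper semi-continuity of the regularization forces $V_E^*\equiv 0$ there as well. For the vanishing of $H_m(V_E^*)$ off $\bar E$, fix a coordinate ball $B\subset\subset X\setminus\bar E$. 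Apply Choquet's lemma to obtain an increasing sequence of competitors $v_j$ with $(\sup_j v_j)^*=V_E^*$, and use Proposition~\ref{prop:lift} to replace each $v_j$ by $\hat v_j\geq v_j$ with $\hat v_j=v_j$ on $X\setminus B$ and $H_m(\hat v_j)\equiv 0$ on $B$. Because $B\cap E=\emptyset$, the lift $\hat v_j$ remains a competitor, and $(\sup_j\hat v_j)^*=V_E^*$ by sandwich; Proposition~\ref{prop:conv-increasing} then gives $H_m(\hat v_j)\to H_m(V_E^*)$ weakly, so $H_m(V_E^*)\equiv 0$ on $B$. Varying $B$ shows that $H_m(V_E^*)$ is supported on $\bar E$, yielding the integral identity.

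The main obstacle is the converse of (a): the weighted series argument succeeds only because Theorem~\ref{thm:cap-int} provides a $j$-uniform $L^1$ bound on the normalized competitors, without which the series could collapse to $-\infty$ identically. A secondary subtlety in (b) is that $V_E^*$ may strictly exceed $V_E$ on an $m$-polar subset of $\bar E$, so the balayage step must be performed at the level of the approximating sequence $v_j$ rather than on $V_E^*$ itself.
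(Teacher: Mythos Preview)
Your proposal is correct and follows essentially the same route as the paper. Two minor remarks: for the $L^1$ bound on the normalized competitors $w_j$ in part (a), the paper invokes the elementary estimate \eqref{eq:mass-0} (equivalently \cite[Lemma~3.3]{KN3}) rather than the full Theorem~\ref{thm:cap-int}; your citation is valid but heavier than needed. In part (b), to apply Proposition~\ref{prop:lift} and Proposition~\ref{prop:conv-increasing} you should first replace the Choquet competitors $v_j$ by $\max\{v_j,-C\}$ to make them bounded, and note that the lift operation is monotone (comparison principle) so that $\hat v_j$ remains increasing---both standard steps the paper also leaves implicit under ``standard balayage argument''.
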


\begin{proof} (a) We will show the following implications:
$$
	\text{globally $m$-polar } \Rightarrow V_E^* \equiv +\infty \Rightarrow \sup_X V_E^* =+\infty \Rightarrow \text{globally $m$-polar}.
$$
The middle one is obvious. Let us prove the first one. Assume $E \subset \{u =-\infty\}$ and $u\in SH_m(X,\om)$. So, $u + c \in SH_m(\om)$ and $u+c \leq 0$ on $E$ for every constant $c \in \bR$. This means $u + c \leq V_E$ and hence $V_E = +\infty$ in  $X \setminus \{u = -\infty\}$. Since the polar set has Lebesgue measure zero, we have $V_E^* = +\infty$ on $X$.  Next, we prove the last implication above. Assume now $\sup_X V_E^* = +\infty$. By Choquet's lemma there is a sequence $\{v_j\}_{j\geq 1} \subset SH_m(\om)$, $v_j = 0$ on $E$ increasing almost everywhere to $V_E^*$. By passing to a subsequence we may assume $\sup_X v_j \geq 2^j$. Denote
$$
	u_j = v_j - \sup_X v_j.
$$
It follows from \cite[Lemma~3.3]{KN3} and \cite[Lemma~9.12]{GN18} that  the sequence $\{u_j\}_{j\geq 1}$ is relatively compact in $L^1(X)$ and it satisfies that $\int_X |u_j| \om^n \leq C_0$ for a uniform constant $C_0$. Set $\vphi_\ell = \sum_{1\leq j\leq \ell} u_j/2^j.$ Then, the sequence $\vphi_\ell \in SH_m(X,\om)$ decreases to $\vphi \leq 0$ whose $L^1$-norm is  uniformly bounded. Thus, $\vphi \in SH_m(X,\om)$ and $\vphi(x) = -\infty$ for every $x\in E$. In other words, $E$ is a globally $m$-polar set. The proof of (a) is finished.

(b) Assume that $E$ is not a globally $m$-polar set. Then, $V_{E}^* \in SH_m(\om)$ and $\sup_X V_E^* = M_E <+\infty$. It is also clear that $V_E^* =0$ in the interior of $E$. The last conclusion follows from the lift property (Proposition~\ref{prop:lift}) and the standard balayage argument (see e.g \cite[Theorem~9.17]{GZ-book}).
\end{proof}

We are ready to show  the characterizations  in Theorem~\ref{thm:intro-polar}.

\begin{thm} Let $E\subset X$ be a Borel set. Then the following are equivalent. 
\begin{itemize}
\item[(a)] $E$ is a globally $m$-polar set.  
\item[(b)]  $E$ is a locally $m$-polar set.
\item[(c)] The relative extremal $m$-sh function $h_E^* \equiv 0$. 
 \item[(d)] $cap_m^*(E) = 0$.
 \item[(e)] The global extremal $m$-sh function $V_E^*\equiv +\infty$.
\end{itemize}
\end{thm}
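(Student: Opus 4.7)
The strategy is to close the circle of five implications by collecting what the paper has already proved and filling in the one remaining link. By this point in the paper we have: (a) $\Rightarrow$ (b) from the observation after \eqref{eq:global-polar} (taking $u + \rho$ in a local chart); (a) $\iff$ (e) from Lemma \ref{lem:global-ext-polar}(a); (b) $\iff$ (d) from the lemma preceding Proposition \ref{prop:relative-ext-basic}, via \cite[Proposition 7.7]{KN25} and Lemma \ref{lem:cap-equiv}; and (c) $\iff$ (d) from Corollary \ref{cor:relative-ext-polar}. Moreover, (a) $\Rightarrow$ (d) already follows from Theorem \ref{thm:intro-cap}: if $E \subset \{u = -\infty\}$ for some $u \in SH_m(X,\om)$, then $v_0 := u - \sup_X u$ satisfies $E \subset \{v_0 < -t\}$ for every $t > 0$, so $cap_m^*(E) \leq A/t \to 0$. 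Consequently, only one link is missing; I propose to establish (b) $\Rightarrow$ (a), the Josefson-type statement in our Hermitian, weighted-Hessian setting.

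For (b) $\Rightarrow$ (a), I would cover $X$ by finitely many coordinate balls $B_1(s), \ldots, B_N(s)$ whose concentric doubles $B_i(2s)$ still sit in holomorphic charts. Local $m$-polarity of $E$ produces, for each $i$, an $m$-$\om$-subharmonic function $u_i \leq 0$ on $B_i(2s)$ with $E \cap B_i(s) \subset \{u_i = -\infty\}$; because $\overline{\Gamma_m(\om)}$ is a convex cone containing $\om$, such a $u_i$ is automatically in $SH_m(B_i(2s), \om)$. The decisive step is then to extend each $u_i$ to a globally defined $\tilde u_i \in SH_m(X,\om)$ while preserving the pole set on the core ball, i.e.\ with $\{\tilde u_i = -\infty\} \supset E \cap B_i(s)$. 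Fixing $\rho_i$ strictly plurisubharmonic with $dd^c \rho_i \geq \om$ on a neighborhood of $\overline{B_i(2s)}$, one sees that $u_i + \rho_i$ is genuinely psh there, singular exactly on $E \cap B_i(s)$; a patching argument modelled on the classical Josefson construction in $\bC^n$ --- using the balayage Proposition \ref{prop:lift} and the max-stability from Corollary \ref{cor:max-prin}, and slightly shrinking $s$ so that a gluing sphere avoids the Lebesgue-null set $E$ (Lemma \ref{lem:vol-cap}) --- yields the desired $\tilde u_i$.

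This extension is where I expect the main difficulty to lie, because a naive pointwise max against a smooth global barrier would fill in every $-\infty$; the construction must be arranged so that on a slightly smaller ball $B_i(s') \Subset B_i(s)$ the extension coincides with $u_i$ up to an additive constant (preserving the poles), while only on an annular transition zone is any max-type gluing performed, against auxiliary functions that do not dominate $u_i$ on the pole set. The fact that $E$ has Lebesgue measure zero is what lets one pick $s'$ so that $u_i$ remains bounded on a thin shell adjacent to $\partial B_i(s')$, which is what makes the max argument applicable there without sacrificing the singularities.

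Once the $\tilde u_i \in SH_m(X,\om)$ are in place, set $\phi := \tfrac{1}{N}\sum_{i=1}^N \tilde u_i$. Then
\[
\om + dd^c \phi = \tfrac{1}{N}\sum_{i=1}^N (\om + dd^c \tilde u_i)
\]
is a positive convex combination of elements of the closed convex cone $\overline{\Gamma_m(\om)}$, hence lies in the cone; and $\phi \in L^1(X)$ because each $\tilde u_i$ is. Therefore $\phi \in SH_m(X,\om)$. For any $x \in E$, some $B_i(s)$ contains $x$, so $\tilde u_i(x) = -\infty$ and hence $\phi(x) = -\infty$. Thus $\phi$ is a globally $(\om,m)$-subharmonic potential with $E \subset \{\phi = -\infty\}$, which gives (a) and closes the circle.
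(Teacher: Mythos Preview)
Your chain of already-established implications is correct, and you rightly identify that only one arrow remains. The gap lies in your proposed proof of $(b)\Rightarrow(a)$: the extension step is not merely ``where the main difficulty lies''---as sketched, it fails. Your claim that ``$E$ has Lebesgue measure zero lets one pick $s'$ so that $u_i$ remains bounded on a thin shell adjacent to $\partial B_i(s')$'' is false in general. A countable dense subset of $B_i(s)$ is locally $m$-polar (take $\sum_j 2^{-j}\log|z-a_j|$, which is psh hence $m$-$\om$-sh), yet every shell meets it, so $u_i$ is unbounded below on every shell. Hence the max-gluing you envision cannot be carried out, and neither Proposition~\ref{prop:lift} (which concerns bounded functions) nor Corollary~\ref{cor:max-prin} (a measure inequality, not a gluing lemma) rescues it. The classical Josefson theorem is itself not proved by a direct patching of this kind.

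The paper closes the loop differently, proving $(d)\Rightarrow(a)$ by a rescaling trick that sidesteps extension entirely. One takes the shrinking metrics $\om_j=\om/2^j$; by \eqref{eq:comparing-al-om-cap} each $cap_{\om_j,m}^*(E)=0$, so the already-proved equivalence $(c)\Leftrightarrow(d)$, applied with $\om_j$ in the role of $\om$, gives $h_{\om_j,E}^*\equiv 0$. Via \eqref{eq:seq-polar} one then picks $v_j\in SH_m(X,\om_j,\om)$ with $v_j\le 0$, $v_j\le -1$ on $E$, and $\int_X|v_j|\,\om^n\le 2^{-j}$. The point is that $\sum_{j=1}^\ell\om_j\le\om$, so the partial sums $u_\ell=\sum_{j=1}^\ell v_j$ lie in $SH_m(X,\om)$ and decrease to a limit $u\in SH_m(X,\om)$ with $E\subset\{u=-\infty\}$. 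The rescaling is what makes the infinite sum of $(\om_j,m)$-sh functions stay $(\om,m)$-sh---this replaces your missing extension argument.
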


\begin{proof} We already know from Lemma~\ref{lem:global-ext-polar}-(a), \eqref{eq:global-polar},  Lemma~\ref{lem:cap-equiv} and Corollary~\ref{cor:relative-ext-polar}  that
$$
	(e)\Leftrightarrow(a) \Rightarrow (b) \Leftrightarrow (d) \Leftrightarrow (c).
$$
Hence, it is enough to show that (d) $\Rightarrow$ (a). We will
use the observation \eqref{eq:seq-polar} for a sequence of Hermitian metrics. Namely,  put $\om_j = \om/2^j$ for integers $j\geq 1$. It is easy to see from \eqref{eq:comparing-al-om-cap} that $cap_{\om_j,m}^* (E) =0$. Applying  (c) $\Leftrightarrow$ (d) for $\om_j$ we  get $h_{\om_j, E}^* =0$, where
$$
	h_{\al,E}(z) = \sup\{v(z) : v\in SH_m(X, \al,\om) : v\leq 0, v\leq -1 \text{ on }E\}
$$
for another Hermitian metric $\al$ on $X$. 
Thus, there exists $v_j \in SH_m(X,\om_j,\om)$ (see the definition in \eqref{eq:chi-m-om}) such that 
$$v_j \leq 0,\quad v_j \leq -1 \text{ on } E, \quad \int_X |v_j| \om^n \leq \frac{1}{2^j}.
$$
Put $u_\ell = \sum_{j=1}^\ell v_j.$ Since $\sum_{j=1}^\ell \om_j \leq \om$, we have  $u_\ell \in SH_m(X,\om)$ for $\ell\geq 1$. Furthermore, this sequence is deceasing to the limit whose $L^1$-norm is uniformly bounded. Hence, $u_\ell \downarrow u\in SH_m(X,\om)$ and clearly $E \subset \{u = -\infty\}$.
\end{proof}

\begin{remark} 
 There is another proof of  (d) $\Rightarrow$ (e) which is enough to conclude the equivalences. One can  follow closely the strategy in \cite[Lemma~2.6]{GLu22} and \cite{Vu} supplying the needed ingredients proven above. 
\end{remark}

As  a consequence we have the counterpart of Proposition~\ref{prop:relative-ext-basic}-(c) for an increasing sequence of sets.

\begin{cor} Let $E_1\subset E_2\subset \cdots \subset X$ and $E = \cup E_j$. Then, $h_E^* = \lim_{j\to \infty} h_{E_j}^*$.
\end{cor}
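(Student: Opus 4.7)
The inclusions $E_j \subset E_{j+1} \subset E$ make the defining constraint ``$v\leq -1$ on the set'' progressively more restrictive, so $\{h_{E_j}^*\}$ is decreasing and bounded below by $h_E^*$. I would set $\vphi := \lim_j h_{E_j}^*$, note that $-1 \leq \vphi \leq 0$ forces $\vphi \in SH_m(X,\om)$ and $\vphi \geq h_E^*$, and then devote the rest of the proof to the reverse inequality $\vphi \leq h_E^*$.

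The first step is to locate where $\vphi$ attains its lower bound. By Proposition~\ref{prop:relative-ext-basic}(b), each $h_{E_j}^*$ equals $-1$ on $E_j \setminus P_j$ for some globally $m$-polar $P_j$. Via the equivalence $(a)\Leftrightarrow(d)$ of Theorem~\ref{thm:intro-polar} together with the summability of polar sets (as in the very proof of that theorem, by forming $\sum_j 2^{-j} u_j$), the union $P := \bigcup_j P_j$ is globally $m$-polar. For any $x \in E \setminus P$ there is some $j_0$ with $x \in E_{j_0}$, so $\vphi(x) \leq h_{E_j}^*(x) = -1$ for $j \geq j_0$, forcing $\vphi(x) = -1$. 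Thus $\vphi \leq 0$ on $X$ and $\vphi \leq -1$ on $E$ \emph{except on} the polar set $P$.

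The main obstacle is to convert this ``almost-admissible'' $\vphi$ into a genuine competitor for $h_E$. By Theorem~\ref{thm:intro-polar} I pick $\rho \in SH_m(X,\om)$ with $\rho \leq 0$ and $\rho \equiv -\infty$ on $P$, and for each $a \in (0,1)$ form
$$
v_a := (1-a)\vphi + a\rho - a.
$$
Since $\om_{v_a} = (1-a)\om_\vphi + a\om_\rho$ is a convex combination of currents valued in the convex cone $\Ga_m(\om)$, we have $v_a \in SH_m(X,\om)$. Clearly $v_a \leq 0$; on $E \setminus P$ the estimate reads $v_a \leq (1-a)(-1) - a = -1$, and on $E \cap P$ the term $a\rho$ sends $v_a$ to $-\infty$. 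Hence $v_a$ is admissible for $h_E$, giving $h_E \geq v_a$ pointwise. Two features are essential here: forming the \emph{convex} combination (not the plain sum) is what keeps us inside $SH_m(X,\om)$ rather than $SH_m(X,2\om)$, while the constant shift $-a$ exactly absorbs the deficit $(1-a)(-1) = -1 + a$ created on $E \setminus P$.

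Finally, at any $x \notin \{\rho = -\infty\}$ (a set of full Lebesgue measure) I let $a \to 0^+$ to obtain $h_E(x) \geq \vphi(x)$, hence $h_E^* \geq \vphi$ Lebesgue-almost everywhere. Since $h_E^*$ and $\vphi$ both lie in $SH_m(X,\om)$ and $(\om,m)$-subharmonic functions are determined by their $L^1_{\rm loc}$ representative (applied, for instance, to $\max\{\vphi, h_E^*\} \in SH_m(X,\om)$, which equals $h_E^*$ a.e.), the inequality promotes to every point. Combined with $\vphi \geq h_E^*$ this yields $h_E^* = \vphi = \lim_j h_{E_j}^*$.
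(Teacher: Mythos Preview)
Your proof is correct and is precisely the classical argument of Bedford--Taylor \cite[Proposition~8.1]{BT82} and Klimek \cite[Corollary~4.7.8]{Kl91} that the paper defers to, adapted to the $(\om,m)$-subharmonic setting. The key ingredients---the decreasing limit $\vphi$, the globally $m$-polar exceptional set $P=\bigcup_j P_j$ (handled via Theorem~\ref{thm:intro-polar}), and the convex-combination competitor $v_a=(1-a)\vphi+a\rho-a$---are exactly those of the standard proof, and your treatment of the technical points (convexity of $\overline{\Ga_m(\om)}$, the shift $-a$, and the promotion of the a.e.\ inequality) is accurate.
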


\begin{proof} Provided the equivalence between locally and globally $m$-polar sets we can follow the  proof of \cite[Proposition~8.1]{BT82} or \cite[Corollary~4.7.8]{Kl91}.
\end{proof}

The next result generalizes the one for the Siciak-Zaharjuta extremal function in pluripotential theory (e.g., \cite[Section~5]{Kl91}).

\begin{cor}\mbox{}
\begin{itemize}
\item[(a)]
Let $E\subset X$ and $P$ be a $m$-polar set. Then, $V_{E\cup P}^* = V_{E}^*$.
\item
[(b)] If Let $E_1\subset E_2\subset \cdots \subset X$ and $E = \cup E_j$, then $V_E^* = \lim_{j\to \infty} V_{E_j}^*$.
\item
[(c)] Let $K_1 \supset K_2 \supset \cdots $ and $K = \cap_j K_j$. Then, $V_{K_j}$ increases to $V_K$ and hence $V_{K_j}^*$ increases a.e to $V_K^*$.
\item
[(d)] Let $E \subset X$ not be a $m$-polar set. Then, there exists a decreasing sequence of open subsets $G_j \supset E$ such that $V_E^* = \lim_{j\to \infty} V_{G_j}^*$.
\end{itemize}
\end{cor}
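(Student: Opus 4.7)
I will handle the four parts in order, with (a) serving as a stepping stone for (b) and (d), and with all parts drawing on the equivalences just established together with the capacity theory built in Sections 3--5.

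For (a), invoke the characterization to pick $u\in SH_m(X,\om)$ with $u\leq 0$ and $P\subset\{u=-\infty\}$. For any candidate $v$ in the sup defining $V_E$, the convex combination $(1-\epsilon)v+\epsilon u$ lies in $SH_m(X,\om)$ by convexity of $\overline{\Ga_m(\om)}$, satisfies $\leq 0$ on $E$, and equals $-\infty$ on $P$; it is therefore admissible for $V_{E\cup P}$. Letting $\epsilon\downarrow 0$ off the polar set $\{u=-\infty\}$ gives $v\leq V_{E\cup P}$ there, so $V_E^*\leq V_{E\cup P}^*$. The reverse inequality $V_{E\cup P}\leq V_E$ is trivial.

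For (b), if $E$ is $m$-polar then the theorem's (a)$\Leftrightarrow$(e) gives $V_E^*\equiv+\infty$ and likewise for each $V_{E_j}^*$. Otherwise, monotonicity of $V_\bullet$ in the set produces a decreasing sequence of bounded $(\om,m)$-sh functions with $V_{E_j}^*\geq V_E^*$, so its limit $w$ lies in $SH_m(\om)\cap L^\infty(X)$ and satisfies $V_E^*\leq w$. For the reverse, the negligible-set principle in this setting (accessible from quasi-continuity plus the polar-set characterization) yields a polar $P_j$ with $V_{E_j}^*=V_{E_j}\leq 0$ on $E_j\setminus P_j$; hence $w\leq 0$ on $E\setminus P$, where $P=\bigcup_j P_j$ is still globally $m$-polar---one combines the $u_j$'s associated to $P_j$ into a weighted series $\sum c_ju_j$ with $\sum c_j\leq 1$ chosen so that $\sum c_j\|u_j\|_{L^1(\om^n)}<\infty$, which is possible thanks to the uniform integrability of Theorem~\ref{thm:cap-int}. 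Then $w$ is admissible for $V_{E\setminus P}$, and part (a) identifies $V_{E\setminus P}^*$ with $V_E^*$, giving $w\leq V_E^*$.

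For (c), monotonicity gives $V_{K_j}\uparrow$ and $V_{K_j}\leq V_K$. For the reverse, any candidate $v$ for $V_K$ satisfies $v<\epsilon$ on the open neighborhood $\{v<\epsilon\}$ of $K$, which contains $K_j$ for all sufficiently large $j$ by the compactness of $K$ and the nested structure $K_j\downarrow K$; so $v-\epsilon\leq V_{K_j}$, and $\epsilon\to 0$ followed by sup over $v$ yields $V_K\leq\lim V_{K_j}$. The a.e.\ statement for regularizations then follows from the Choquet--Zeriahi principle that the usc regularization of the supremum of an increasing family of $(\om,m)$-sh functions agrees with the pointwise supremum off a polar set. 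For (d), apply Choquet's lemma to obtain $v_j\in SH_m(\om)$ with $v_j\leq 0$ on $E$ and $v_j\uparrow V_E^*$ a.e.; set $G_j:=\bigcap_{i\leq j}\{v_i<1/i\}$. These are open, decreasing, and contain $E$; on $G_j$ one has $v_j-1/j<0$, hence $v_j-1/j\leq V_{G_j}\leq V_{G_j}^*$, and passing to the limit gives $V_E^*\leq\lim V_{G_j}^*$ a.e., while the reverse $V_{G_j}^*\leq V_E^*$ is immediate from $E\subset G_j$.

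The main obstacle is (b), where three ingredients specific to the compact Hermitian $(\om,m)$-sh setting must be synthesized: the countable-union stability of global $m$-polarity (needing the uniform $L^1$-integrability from Theorem~\ref{thm:cap-int}), the negligible-set theorem in this setting (via the quasi-continuity proposition and the polar-set characterization just proven), and part (a) to identify $V_{E\setminus P}^*$ with $V_E^*$. Once (b) is in hand, (c) and (d) are routine adaptations of Klimek-type pluripotential-theoretic arguments with no new technical difficulties.
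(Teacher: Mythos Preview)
Your proposal is correct and follows essentially the same approach as the paper, which simply defers to \cite[Proposition~9.19]{GZ-book} with the remark that the proof carries over ``with obvious modifications in the current setting.'' You have carefully fleshed out exactly those modifications---in particular identifying that the nontrivial work lies in (b), where one needs the negligible-set principle (derivable here from quasi-continuity and the polar-set characterization in Theorem~\ref{thm:intro-polar}, cf.\ also Proposition~\ref{prop:relative-ext-basic}(b)) together with countable stability of global $m$-polarity (Proposition~\ref{prop:relative-ext-basic}(d) plus Theorem~\ref{thm:intro-polar})---so your write-up is in fact more explicit than the paper's own treatment.
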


\begin{proof} The proof follows the lines of the one for  Proposition~9.19 in \cite{GZ-book} with obvious modifications in the current setting. 
\end{proof}


\begin{thebibliography}{000000000}

\bibitem[BT76]{BT76} E. Bedford and B. A. Taylor, {\it The Dirichlet problem for a complex Monge-Amp\`ere operator.} Invent. math. {\bf37} (1976)  1--44.

\bibitem[BT82]{BT82} E. Bedford and B. A. Taylor,  {\it A new capacity for plurisubharmonic functions.} Acta Math. {\bf149} (1982)  1--40.


\bibitem[Bl05]{Bl05} Z. B\l ocki, {\it Weak solutions to the complex Hessian equation,} Ann. Inst. Fourier (Grenoble) {\bf 55} (2005), no.~5, 1735--1756.

\bibitem[CNS85]{CNS85} L. Caffarelli, L. Nirenberg\ and\ J. Spruck, {\it The Dirichlet problem for nonlinear second-order elliptic equations. III. Functions of the eigenvalues of the Hessian,} Acta Math. {\bf 155} (1985), no.~3-4, 261--301.

\bibitem[Ce98]{Ce98} U. Cegrell,  {\it Pluricomplex energy,} Acta Math. {\bf 180:2} (1998) 187-217.

\bibitem[CX25]{CX25}
J. Cheng and Y. Xu, {\em Viscosity solution to complex Hessian equations on compact Hermitian manifolds}, J. Funct. Anal. {\bf 289} (2025), no.~5, Paper No. 110936, 52 pp.

\bibitem[CM21]{CM21} J. Chu, N. McCleerey, {\it Fully non-linear degenerate elliptic equations in complex geometry.} J. Funct. Anal. {\bf 281} (2021), no. 9, Paper No. 109176, 45 pp.

\bibitem[CP22]{CP22} T. Collins, S. Picard, {\it The Dirichlet problem for the $k$-Hessian equation on a complex manifold.}  Amer. J. Math. {\bf 144} (2022), no.6, 1641--1680.

\bibitem[DK12]{DK12}S. Dinew and S. Ko\l odziej, 
{\it  Pluripotential estimates on compact Hermitian manifolds.}   Adv. Lect. Math. (ALM), {\bf 21} (2012)  International Press, Boston. 

\bibitem[DK14]{DK14} S. Dinew\ and\ S. Ko\l odziej, {\it A priori estimates for complex Hessian equations,} Anal. PDE {\bf 7} (2014), no.~1, 227--244. 

\bibitem[DK17]{DK17}S. Dinew\ and\ S. Ko\l odziej, {\it Liouville and Calabi-Yau type theorems for complex Hessian equations.}  Amer. J. Math. {\bf 139} (2017), no.2, 403--415.


\bibitem[D21]{D21} W. Dong, {\it Second order estimates for a class of complex Hessian equations on Hermitian manifolds.}  
J. Funct. Anal. 281 (2021), no. {\bf 7}, Paper No. 109121, 25 pp.

\bibitem[DL21]{DL21} W. Dong, C. Li,  {\it Second order estimates for complex Hessian equations on Hermitian manifolds.} Discrete Contin. Dyn. Syst. {\bf 41} (2021), no. 6, 2619--2633.


\bibitem[Fa25a]{Fa25a} Y. Fang, {\em integrability of $(\om,m)$-subharmonic functions on compact Hermitian manifolds}, preprint 2025.

\bibitem[Fa25b]{Fa25b} Y. Fang, {\em Continuity of solutions to complex Hessian equations on compact Hermitian manifolds,} arXiv: arXiv:2510.14690.


\bibitem[GN18]{GN18} D. Gu, N.-C. Nguyen, {\it The Dirichlet problem for a complex Hessian equation on compact Hermitian manifolds with boundary.} Ann. Sc. Norm. Super. Pisa Cl. Sci. (5) {\bf 18 }(2018), no.4, 1189--1248.


\bibitem[GLu22]{GLu22}
V. Guedj and C.~H. Lu, Quasi-plurisubharmonic envelopes 2: Bounds on Monge-Amp\`ere volumes, Algebr. Geom. {\bf 9} (2022), no.~6, 688--713.


\bibitem[GLu25]{GLu25} V. Guedj and C.~H. Lu, {\it Degenerate complex Hessian equations on compact Hermitian manifolds}, Pure Appl. Math. Q. {\bf 21} (2025), no.~3, 1171--1194.

\bibitem[GZ05]{GZ05}
V. Guedj and A. Z\'eriahi, {\it Intrinsic capacities on compact K\"ahler manifolds}, J. Geom. Anal. {\bf 15} (2005), no.~4, 607--639.

\bibitem[GZ17]{GZ-book} V. Guedj\ and\ A. Zeriahi, {\it Degenerate complex Monge-Amp\`ere equations}, EMS Tracts in Mathematics, 26, European Mathematical Society (EMS), Z\"{u}rich, 2017.

\bibitem[GP24]{GP22} B. Guo, D. H. Phong, {\em On $L^\infty$ estimates for fully non-linear partial differential equations}, Ann. of Math. (2) {\bf 200} (2024), no.~1, 365--398.

\bibitem[GPTW24]{GPTW24} B. Guo et al., {\em On $L$$^\infty$ estimates for Monge-Amp\`ere and Hessian equations on nef classes}, Anal. PDE {\bf 17} (2024), no.~2, 749--756.

\bibitem[Jo78]{Jo} B. Josefson, {\em On the equivalence between locally polar and globally polar sets for plurisubharmonic functions on ${\bf C}\sp{n}$}, Ark. Mat. {\bf 16} (1978), no.~1, 109--11

\bibitem[Kl91]{Kl91} M. Klimek, {\it Pluripotential theory}, London Mathematical Society Monographs. New Series, 6, The Clarendon Press, Oxford University Press, New York, 1991.

\bibitem[Ko98]{Ko98}
S. Ko\l odziej, {\em The complex Monge-Amp\`ere equation}, Acta Math. {\bf 180} (1998), no.~1, 69--117;

\bibitem[Ko05]{K05} S. Ko\l odziej, {\it The complex Monge-Amp\`ere equation and pluripotential theory.} Memoirs Amer. Math. Soc. {\bf 178} (2005)  pp. 64.

\bibitem[KN16]{KN3}S. Ko\l odziej\ and\ N. C. Nguyen, {\it Weak solutions of complex Hessian equations on compact Hermitian manifolds}, Compos. Math. {\bf 152} (2016), no.~11, 2221--2248.


\bibitem[KN23b]{KN23} S. Ko\l odziej\ and\ N.-C. Nguyen, {\em  Weak solutions to Monge-Amp\`ere type equations on compact Hermitian manifold with boundary.} J. Geom. Anal. {\bf 33} (2023), no.1, Paper No. 15, 20 pp.

\bibitem[KN25]{KN25}S. Ko\l odziej\ and\ N.-C. Nguyen, {\em 
Complex Hessian measures with respect to a background Hermitian form,}
{\bf arXiv: 2308.10405.} To appear in APDE.

\bibitem[La02]{La}
D.~A. Labutin, {\em Potential estimates for a class of fully nonlinear elliptic equations}, Duke Math. J. {\bf 111} (2002), no.~1, 1--49.


\bibitem[LeN]{LeN}
L\^e~M\^au~Hai and V. Van~Quan, {\em Continuous solutions to complex Hessian equations on Hermitian manifolds}, J. Geom. Anal. {\bf 33} (2023), no.~12, Paper No. 368, 29 pp.

\bibitem[Le50]{Le50} P. Lelong, {\it Fonctions plurisousharmoniques et formes diff\'erentielles positives}, Gordon \& Breach, Paris-London-New York, 1968 distributed by Dunod \'Editeur, Paris, 1968


\bibitem[Li04]{Li04} S.-Y. Li, {\it On the Dirichlet problems for symmetric function equations of the eigenvalues of the complex Hessian,} Asian J. Math. {\bf 8} (2004), 87-106.


\bibitem[Lu13a]{chinh13a} H.-C. Lu, {\it Viscosity solutions to complex Hessian equations,} J. Funct. Anal. {\bf 264} (2013), no.~6, 1355--1379.

\bibitem[Lu13b]{chinh13b} H.-C. Lu, {\it Solutions to degenerate complex Hessian equations,} J. Math. Pures Appl. (9) {\bf 100} (2013), no.~6, 785--805. 

\bibitem[Lu15]{chinh15} C. H. Lu, {\it A variational approach to complex Hessian equations in $\Bbb{C}\sp n$,} J. Math. Anal. Appl. {\bf 431} (2015), no.~1, 228--259. 

\bibitem[LN15]{chinh-dong} H-C. Lu and V-D. Nguyen, {\it Degenerate complex Hessian equations on compact K\"ahler manifolds,}  Indiana Univ. Math. J. {\bf 64} (2015), no.~6, 1721--1745.

\bibitem[PT21]{PT21}
D.-H. Phong, T.-D. To,  {\it Fully non-linear parabolic equations on compact Hermitian manifolds.} Ann. Sci. \'Ec. Norm. Sup\'er. (4) {\bf 54} (2021), no. 3, 793-829.

\bibitem[Su24]{Su24}
W. Sun, {\em The weak solutions to complex Hessian equations}, Calc. Var. Partial Differential Equations {\bf 63} (2024), no.~2, Paper No. 57, 35 pp.

\bibitem[Sz18]{Sz18} G. Sz\'ekelyhidi, {\it Fully non-linear elliptic equations on compact Hermitian manifolds.} J. Differential Geom. {\bf 109} (2018), no.2, 337--378.


\bibitem[TW10]{TW10b} V. Tosatti and B. Weinkove, {\it The complex Monge-Amp\`ere equation on compact Hermitian manifolds.} J. Amer. Math. Soc. {\bf 23} (2010)  1187--1195.


\bibitem[V88]{V88} A. Vinacua, {\it Nonlinear elliptic equations and the complex Hessian,} Communications in partial differential equations 13.12 (1988), 1467-1497.

\bibitem[Vu19]{Vu} D.-V. Vu, {\em Locally pluripolar sets are pluripolar,} Internat. J. Math. {\bf 30} (2019), no.~13, 1950029, 13 pp


\bibitem[Wa09]{Wa09} X.-J. Wang, {\it The $k$-Hessian equation,} Lect. Not. Math. {\bf 1977} (2009).

\bibitem[Zha17]{Zha17} D. Zhang, {\it Hessian equations on closed Hermitian manifolds.}  Pacific J. Math. {\bf 291} (2017), no.2, 485--510.

\end{thebibliography}
\end{document}